\title[IRS\MakeLowercase{s} in rank one]{On the growth of $L^2$-invariants of locally symmetric spaces, II: Exotic invariant random subgroups in rank one}
\author[Abert]{Miklos Abert$^1$}
\address{$^1$Renyi Institute of Mathematics
}
\email{abert.miklos@renyi.mta.hu}
\author[Bergeron]{Nicolas Bergeron$^2$}
\address{$^2$Institut de Math\'ematiques de Jussieu}
\email{bergeron@math.jussieu.fr}
\author[Biringer]{Ian Biringer$^3$}
\address {$^3$Boston College}
\email{ianbiringer@gmail.com  \rm (corresponding author)}
\author[Gelander]{Tsachik Gelander$^4$}
\address{$^4$Weizmann Institute}
\email{tsachik.gelander@gmail.com}
\author[Nikolov]{Nikolay Nikolov$^5$}
\address{$^5$University College, Oxford}
\email{zarkuon@gmail.com}
\author[Raimbault]{Jean Raimbault$^6$}
\address{$^6$Institut de Math\'ematiques de Toulouse}
\email{Jean.Raimbault@math.univ-toulouse.fr}
\author[Samet]{Iddo Samet$^7$}
\address{$^7$University of Illinois at Chicago}
\email{samet@math.uic.edu}
\theoremstyle{definition}
\newtheorem{theoalph}{Theorem}
\newtheorem{exalph}[theoalph]{Example}
\newtheorem{questalph}[theoalph]{Question}
\newtheorem{defn}[subsection]{Definition}
\newtheorem{thm}[subsection]{Theorem}
\newtheorem{lem}[subsection]{Lemma}
\newtheorem*{lem*}{Lemma}
\newtheorem{prop}[subsection]{Proposition}
\newtheorem*{prop*}{Proposition}
\newtheorem{example}[subsection]{Example}
\newtheorem{cor}[subsection]{Corollary}
\newtheorem*{theostar}{Theorem}
\newtheorem*{defstar}{Definition}
\newcommand {\comment} [1] {}
\newcommand{\length}{\mathrm{length}}
\newcommand{\diam}{\mathrm{diam}}
\newcommand{\BZ}{\mathbb{Z}}
\newcommand{\CT}{\mathbb{T}}
\newcommand{\CE}{\mathcal{E}}
\newcommand{\BG}{\mathcal{G}}
\newcommand{\EL}{\mathcal{EL}}
\newcommand{\PML}{\mathcal{PML}}
\newcommand{\BC}{\mathbb{C}}
\newcommand{\BR}{\mathbb{R}}
\newcommand{\modular}{\mathrm{Mod}}
\newcommand{\axis}{\mathrm{Axis}}
\newcommand{\BH}{\mathbb{H}}
\newcommand{\DD}{\mathcal{DD}}
\newcommand{\stab}{\mathrm{Stab}}
\newcommand{\Z}{\mathbb{Z}}
\newcommand{\N}{\mathbb{N}}
\DeclareFontFamily{OT1}{rsfs}{}
\DeclareFontShape{OT1}{rsfs}{n}{it}{<-> rsfs10}{}
\DeclareMathAlphabet{\mathscr}{OT1}{rsfs}{n}{it}
\newcommand{\Hom}{\mathrm{Hom}}
\newcommand{\R}{\mathbb{R}}
\newcommand{\SO}{\mathrm{SO}}
\newcommand{\Su}{\mathrm{SU}}
\newcommand{\rank}{\text{rank}}
\newcommand{\vol}{\mathrm{vol}}
\def\dist{\text{dist}}
\def\N{{\cal N}}
\newcommand\RR{{\mathcal{R}}}
\numberwithin{equation}{subsection}
\newcommand{\cal}{\mathcal}
\newcommand{\SL}{\mathrm{SL}}
\newcommand{\SU}{\mathrm{SU}}
\newcommand{\PSL}{\mathrm{PSL}}
\newcommand{\sub}{\mathrm{Sub}}
\begin{document}

\begin{abstract}
In the first paper of this series we studied the asymptotic behavior of Betti numbers, twisted torsion and other spectral invariants for sequences of lattices in Lie groups $G$.  A key element of our work was the study of \emph {invariant random subgroups} (IRSs) of $G$. Any sequence of lattices has a subsequence converging to an IRS, and when $G$ has higher rank, the Nevo--Stuck--Zimmer theorem  classifies all IRSs of $G$.   Using the classification, one can deduce asymptotic statments about  spectral invariants of lattices.

When $G$ has real rank one, the space of IRSs is more complicated. We construct here several uncountable families of IRSs in the groups $SO(n,1), \, n \ge 2$.  We give dimension-specific  constructions when $n=2,3$, and also describe a general gluing construction  that works for every $n$. Part of the latter contruction is inspired by Gromov and Piatetski-Shapiro's construction of non-arithmetic lattices in $SO(n,1)$.
\end{abstract} 
\maketitle


\section*{Introduction}

This is the second half of our paper \emph{On the growth of $L^2$-invariants of locally symmetric spaces}, which was posted on the arXiv in 2012  and which we have split in two for publication.  With the exception of this added introduction, all the sections of this paper appeared in the earlier preprint, and we have preserved the original section numbers so as to not break existing citations.

\subsection*{Invariant random subgroups}

Let $G$ be a locally compact second countable group, and $\sub_G$ the set of closed subgroups of $G$. We consider $\sub_{G}$ with the \rm Chabauty topology, \rm see \cite{Chabautylimite}.
%

\begin{defstar}
An \emph{invariant random subgroup} (IRS) of $G$  is a random element of $\sub_G$ whose law $\mu$ is a Borel probability measure on $\sub_G$ invariant under the conjugation action of $G \circlearrowright \sub_G$. Often, we will abusively  call $\mu$ an IRS as well.
\end{defstar}

The term IRS was introduced by Ab\'ert--Glasner--Vir\'ag in \cite{Abertkesten} for discrete groups, although they were also  studied by Vershik~\cite{Vershiktotally} under a different name, and we introduced IRSs to Lie groups in our earlier paper \cite{Abertgrowth}. If $G$ acts by a measure preserving transformations on a standard probability space $X$, then almost every stabilizer $G_x$, where $x\in X$, is a closed subgroup of $G $, see \cite[Theorem 3.2]{Varadarajangroups}. Hence, the stabilizer of a random $x\in X$ is an IRS of $G$. In fact, by \cite[Theorem 2.6]{Abert2012growth}, all IRSs can obtained as random stabilizers from p.m.p.\ actions. 

Concrete examples of IRSs include normal subgroups of $G$, as well as random conjugates $g\Gamma g^{-1}$ of a lattice $\Gamma < G$, where the conjugate is chosen by selecting $\Gamma g$ randomly against the given finite measure on $\Gamma \backslash G$. More generally, any IRS $H$ of a lattice $\Lambda < G$ \emph {induces} an IRS of $G$, obtained by conjugating $H$ by a random element of $\Gamma \backslash G$. We describe this in more detail in \S \ref{sec:induction} below. 

Though the definition of an IRS may seem rather unassuming, even in a very general context there are restrictions on which groups can actually appear in the support of one. For example an invariant random subgroup supported on amenable subgroups almost surely lies in the amenable radical \cite{Baderamenable}, and in the context of nonpositive curvature there is the stronger statement that an IRS is almost surely ``geometrically dense'' \cite{Duchesnegeometric}, which is a generalisation of the ``Borel density theorem'' proven in \cite{Abert2012growth}. There is a growing literature on IRSs on more specific groups, in particular construction of examples (see, e.g.,~\cite{Biringerunimodularity,bowen2012invariant,bowen2015invariant, thomas2014invariant, Gelander-LN}) and their applications, see especially \cite{bowen2010random, hartman2015furstenberg, tucker2015weak, WUD}. A still open question is whether there exists a simple, non-discrete locally compact group which does not have any IRSs beyond itself and the trivial subgroup; candidate are the ``Neretin groups'' which are known not to contain any lattice \cite{Badersimple}. 


\subsection*{Invariant random subgroups in semisimple Lie groups}

Suppose now that $G$ is a simple, noncompact center free Lie group. In \cite{Abertgrowth},  we proved the following  strong rigidity result  for higher rank IRSs using the Nevo--St\"uck--Zimmer Theorem \cite{Nevogeneralization,Stuckstabilizers} and Kazhdan's property (T).

\begin{theostar} \label{sz}
If $\rank_\BR G \geq 2$, the ergodic IRSs of $G$ are exactly $\{e\}, G$ and IRSs constructed as random conjugates of a lattice $\Gamma < G$. Moreover, if the laws of these IRSs are denoted $\mu_{id},\ \mu_G$, and $\mu_\Gamma$,  then for any sequence of pairwise non-conjugate lattices $\Gamma_n < G$, the measures $\mu_{\Gamma_n}$ weakly converge to $\mu_{id}$.
\end{theostar}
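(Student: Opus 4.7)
\emph{Classification.} The first assertion is a direct reformulation of the Nevo--St\"uck--Zimmer theorem. Any IRS $\mu$ is realized as the distribution of a random stabilizer in some p.m.p.\ action $G \actson (X,\nu)$, and we may assume $\nu$ is ergodic. Nevo--St\"uck--Zimmer then says that such an ergodic action of a simple Lie group with $\rank_\BR G \geq 2$ is either essentially free --- giving $\mu = \mu_{id}$ --- or essentially transitive on some homogeneous space $G/\Lambda$ with $\Lambda$ a closed subgroup of finite covolume. Combined with the Margulis normal subgroup theorem, this forces $\Lambda$ to be either $G$ itself (giving $\mu_G$) or a lattice $\Gamma$ (giving $\mu_\Gamma$).

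\emph{Convergence, strategy.} For the second assertion, the plan is to argue by compactness. Since $\sub_G$ is compact in the Chabauty topology, so is the space of Borel probability measures on $\sub_G$ in the weak-$*$ topology, and $G$-invariance passes to weak-$*$ limits. Hence it suffices to show that every subsequential limit $\mu$ of $\{\mu_{\Gamma_n}\}$ equals $\mu_{id}$. Applying the classification to the ergodic decomposition of $\mu$ gives
\[
\mu \;=\; \alpha\,\mu_{id} + \beta\,\mu_G + \int \mu_\Gamma\,d\nu(\Gamma),
\]
for some $\alpha,\beta \ge 0$ and a (discrete, by Wang's finiteness) measure $\nu$ on the space of lattice conjugacy classes of $G$. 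The task reduces to showing $\beta = 0$ and $\nu \equiv 0$.

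\emph{Ruling out non-trivial components.} To kill the $\mu_G$ term: Kazhdan--Margulis provides an open neighbourhood $\Omega$ of $e$ in $G$ such that every lattice admits a conjugate $\Gamma'$ with $\Gamma' \cap \Omega = \{e\}$. The Chabauty-open set $\{H : H \cap \Omega = \{e\}\}$ carries uniform positive $\mu_{\Gamma_n}$-mass yet is disjoint from $\{G\}$, which is the input toward $\beta = 0$ via Portmanteau. To kill the $\mu_\Gamma$ components: Wang's finiteness theorem (valid in higher rank) gives $\vol(\Gamma_n \backslash G) \to \infty$ along any pairwise non-conjugate sequence, while $\mu_\Gamma$ is concentrated on conjugates of a fixed lattice of bounded covolume; this covolume mismatch is the reason $\mu_{\Gamma_n}$ cannot accumulate on any fixed $\mu_\Gamma$.

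\emph{Main obstacle.} The delicate step is making these qualitative inputs quantitative: showing that $\mu_{\Gamma_n}$ places vanishing mass on arbitrary Chabauty neighbourhoods of $\{G\}$ or of the conjugacy class of any fixed lattice. Covolume is only lower semi-continuous under Chabauty limits, so one cannot simply read off the conclusion. The cleanest route --- developed in detail in the companion paper \cite{Abertgrowth} --- reformulates $\mu_{\Gamma_n} \to \mu_{id}$ as Benjamini--Schramm convergence of the locally symmetric spaces associated to $\Gamma_n$ to the universal cover, equivalently as sublinearity of the volume of the $R$-thin part for each $R > 0$; higher rank, property (T), and the structure theory of thin parts then combine to produce the required bound.
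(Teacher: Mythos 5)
This theorem is not proved in the present paper at all: it is imported verbatim from Part I (\cite{Abertgrowth}), with the authors noting only that the proof there uses the Nevo--St\"uck--Zimmer theorem \emph{and Kazhdan's property (T)}. Your first paragraph, the classification of ergodic IRSs, matches that cited argument in outline and is acceptable as a high-level reduction (one small quibble: once NSZ hands you a closed finite-covolume stabilizer $\Lambda$, the dichotomy ``$\Lambda=G$ or $\Lambda$ a lattice'' follows from Borel density plus simplicity of $G$ applied to $\Lambda^{\circ}$, not from the normal subgroup theorem, which is used inside NSZ itself).

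The convergence statement is where there is a genuine gap. You correctly set up the compactness argument and correctly identify the obstacle --- that covolume is only semicontinuous under Chabauty limits, so the ``covolume mismatch'' and the Kazhdan--Margulis observation are not by themselves enough to kill $\beta$ and $\nu$ --- but you then stop and defer the resolution to the companion paper. That deferral is exactly the missing proof. The idea your proposal never deploys is the one the authors explicitly flag: property (T) enters through the Glasner--Weiss theorem, which says that for a Kazhdan group the \emph{ergodic} invariant probability measures form a weak-$*$ closed subset of the invariant measures. Applied to the conjugation action on $\sub_G$, this shows that any subsequential limit $\mu$ of the ergodic measures $\mu_{\Gamma_n}$ is itself ergodic, so by the classification $\mu$ is literally one of $\mu_{id}$, $\mu_G$, or $\mu_\Lambda$ for a single lattice $\Lambda$ --- there is no ergodic decomposition to control and no quantitative estimate to make. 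One is then left with two clean exclusions: $\mu\neq\mu_G$ via the Kazhdan--Margulis/Zassenhaus neighborhood (a positive-measure set of conjugates of each $\Gamma_n$ meets a fixed identity neighborhood trivially, while $\mu_G=\delta_G$ is an atom at a non-discrete group), and $\mu\neq\mu_\Lambda$ because convergence $\mu_{\Gamma_n}\to\mu_\Lambda$ forces, via Chabauty rigidity of lattices in higher rank together with Wang's finiteness theorem, that infinitely many $\Gamma_n$ be conjugate, contradicting the hypothesis. Without the Glasner--Weiss step your decomposition $\mu=\alpha\mu_{id}+\beta\mu_G+\int\mu_\Gamma\,d\nu$ leaves you needing exactly the quantitative control you admit you do not have, so as written the proposal does not establish the second assertion.
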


This completely describes the topological spaces of IRSs in simple higher-rank Lie groups. On the other hand, if $G$ is a group of real rank one then  there are always more IRSs than those described in this theorem. For example, every cocompact lattice in $G$ is a Gromov-hyperbolic group and hence contains plenty of normal infinite subgroups of infinite index, see Theorem 17.2.1 of \cite{Morrisintroduction}. Taking a random conjugate of one of the latter we obtain an IRS which violates the conclusion of the theorem above. More generally we can induce from IRSs in lattices, which yields further examples as many lattices in $\SO(n, 1)$ (and a few in $\SU(2,1)$) surject onto nonabelian free groups, which have plenty of IRSs by \cite{bowen2012invariant}. It turns out that in the case where $G = \SO(n,1)$ the wealth of available IRSs goes well beyond these examples, as hyperbolic geometry yields constructions of ergodic IRSs which are not induced from a lattice. These constructions are the main object of this paper and we will describe them in some detail in the next section. 

\medskip

First, though, let us say a few words about further restrictions on the groups appearing in IRSs of rank-one Lie groups. Making the Borel and geometric density theorems referenced above more precise, we will prove in Section \ref{sec:rank1}:

\begin{theoalph}[See Proposition \ref{limits} for a more general statement] Suppose that $\mu$ is an IRS  of $SO(n,1)$ that does not have an atom at $\{id\} \in \sub_G$.  Then $\mu$-a.e.\ $H\in \sub_G$  has  full limit set, i.e.\ $\Lambda(H)=\partial \BH^n$.
\end{theoalph}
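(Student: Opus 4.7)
My plan is to combine a case analysis of proper limit sets with a geometric-density theorem for IRSs.

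\emph{Step 1 (Geometric reduction).} For any closed $H \le G = \SO(n,1)$ with $\Lambda(H) \subsetneq \partial\BH^n$, I would show that $H$ preserves a proper nonempty closed convex subset of $\overline{\BH^n}$. When $\Lambda(H) = \emptyset$, $H$ has bounded orbits in $\BH^n$, is therefore compact, and fixes a point $p \in \BH^n$. When $|\Lambda(H)| = 1$, $H$ fixes the unique limit point at infinity. When $|\Lambda(H)| \ge 2$ and $\Lambda(H) \subsetneq \partial\BH^n$, the closed convex hull $\mathrm{conv}(\Lambda(H)) \subset \BH^n$ is $H$-invariant and proper: for any $\xi \in \partial\BH^n \setminus \Lambda(H)$, one has $\xi \notin \overline{\mathrm{conv}(\Lambda(H))} \cap \partial\BH^n = \Lambda(H)$, so deep horoballs at $\xi$ are disjoint from $\mathrm{conv}(\Lambda(H))$.

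\emph{Step 2 (Geometric density for IRSs).} I would then invoke the geometric-density theorem for IRSs --- as in Duchesne--Gelander--L\'ecureux \cite{Duchesnegeometric} --- which asserts that for any IRS $\mu$ of $G$ with no atom at $\{id\}$, $\mu$-a.e.\ $H$ preserves no proper nonempty closed convex subset of $\overline{\BH^n}$. The idea of its proof is to push $\mu$ forward along a Borel map $H \mapsto C_H$ that assigns to each $H$ a canonical $H$-invariant closed convex subset of $\overline{\BH^n}$, producing a $G$-invariant Borel probability measure on the Chabauty space of closed convex subsets of $\overline{\BH^n}$. A Borel-density / recurrence argument then forces this measure to be the point mass at $\overline{\BH^n}$, using the classification of maximal proper algebraic subgroups of $\SO(n,1)$ --- parabolic, compact, or of type $\mathrm{S}(O(k,1)\times O(n-k))$ --- together with the fact that the homogeneous quotients $G/L$ admit no finite $G$-invariant measure for any such $L$.

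Combining Steps 1 and 2 immediately yields $\Lambda(H) = \partial\BH^n$ for $\mu$-a.e.\ $H$. The main obstacle is Step 2, specifically the measurable construction of the canonical $H$-invariant closed convex set $C_H$ (which must handle the three geometric cases uniformly and $G$-equivariantly) together with the verification that $G/L$ carries no finite $G$-invariant measure for each proper algebraic stabilizer $L$; Step 1 is then a routine application of hyperbolic geometry.
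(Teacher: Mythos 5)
Your argument is correct, but it takes a genuinely different route from the paper's. The paper does not pass through geometric density: it proves the more general Proposition \ref{limits} (valid for IRSs of any closed non-elementary $H<G$) by a short, self-contained dynamical argument. There one first uses north--south dynamics of a hyperbolic element $h\in H$ together with conjugation-invariance of $\mu$ to upgrade ``$\Lambda(\Gamma)$ misses an open set of $\Lambda(H)$ with probability $\eps$'' to ``$\Lambda(\Gamma)=\emptyset$ with probability at least $\eps$,'' and then rules out finite $\Gamma$ outside the exceptional compact normal subgroup $A$ by showing that the conjugates $h^{-i}\mathcal{U}h^{i}$ of a small Chabauty neighborhood $\mathcal{U}$ of such a $\Gamma$ are eventually pairwise disjoint while all carrying the same positive $\mu$-mass --- a Poincar\'e-recurrence contradiction. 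Your Step 1 (empty limit set $\Rightarrow$ compact $\Rightarrow$ fixed point; one limit point $\Rightarrow$ fixed boundary point; otherwise the convex hull of $\Lambda(H)$ is a proper nonempty invariant closed convex set) is correct, and Step 2 is a legitimate appeal to \cite{Duchesnegeometric}, which the paper itself cites; so your deduction of Theorem A is valid. What you trade away is self-containedness --- all the real work is outsourced to \cite{Duchesnegeometric}, whose proof is considerably more involved than the paper's one-page argument --- and generality: your route yields Theorem A for $G=\SO(n,1)$ itself but not Proposition \ref{limits} for IRSs of a proper closed subgroup $H$, where the conclusion is $\Lambda(\Gamma)=\Lambda(H)$ and the compact normal subgroup $A$ genuinely intervenes. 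One caveat about your sketch of Step 2: the stabilizer of a proper closed convex subset of $\overline{\BH^n}$ need not be algebraic, so the classification of maximal proper algebraic subgroups is not the right tool for the pushforward argument as you describe it; the actual proof in \cite{Duchesnegeometric} proceeds via a measurable canonical-convex-set (circumcenter/de Rham type) construction and a recurrence argument closer in spirit to the paper's. Since you cite the theorem rather than reprove it, this does not invalidate your proof of Theorem A, but the sketch as written would not suffice to establish the geometric density theorem itself.
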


In particular this forbids IRSs to be geometrically finite with positive probability. In the later work \cite{Abertunimodular} Ab\'ert and Biringer prove a much more precise result in dimension 3: any ergodic torsion-free IRS of $\SO(3,1)$ which is almost surely finitely generated must be either a lattice IRS or be supported on doubly degenerate subgroup (one of our constructions shows that the latter case contains a wealth of interesting examples). In dimension 2 any non-lattice ergodic torsion-free IRS must be supported on infinite-rank free subgroups; the topology of the corresponding infinite-type surfaces is very restricted, as described in \cite{Biringerends}. 

\medskip

Our main motivation in \cite{Abertgrowth} to study IRSs was their applications to the ``limit multiplicity problem''. As an example, a corollary of our classification of IRSs in higher rank is that for any sequence of cocompact lattices in an irreducible higher-rank Lie group $G$ with property $(T)$ which are in addition ``uniformly discrete'' (not intersecting a fixed identity neighbourhood in $G$---conjecturally this is always true if assuming torsion-freeness), the Betti numbers are always sublinear in the covolume except in the middle dimension if $G$ has discrete series.

This result is proven through the notion of \emph{Benjamini--Schramm convergence}, which gives a geometric meaning to the convergence of IRSs in the weak topology, and the generalization of the L\"uck Approximation Theorem to this setting. Since the space of IRSs is compact there are always limit points; thus, understanding the space of IRSs might in principle give information about the geometry and topology of locally symmetric spaces of large volume. 


\subsection*{Constructions of IRSs in real hyperbolic spaces}

To describe these examples, it will be more useful to interpret discrete\footnote{By a corollary of Borel's density theorem \cite[Theorem 2.9]{Abertgrowth}, any IRS of a simple Lie group $G$  that does not have an atom at $G\in \sub_G$ is discrete almost surely. 
} IRSs of $SO(n,1)$ as \emph{random framed hyperbolic $n$-orbifolds}.
   
Whenever  $\Gamma$  is a discrete subgroup of $SO(n,1)$,  the quotient $M_\Gamma =\Gamma \backslash \BH^n$ is a  hyperbolic $n$-orbifold. If a baseframe is  fixed in $\BH^n$,  its projection gives a canonical baseframe for $M_\Gamma$, and the map $\Gamma \mapsto  M_\Gamma$  is a bijection  from  the set of discrete subgroups of $G $ to the set of isometry classes of framed  hyperbolic $n$-orbifolds. A random $\Gamma$  then gives a random framed orbifold. 
 Intuitively, a random framed hyperbolic $n$-orbifold  represents an IRS if whenever a particular (unframed)  orbifold is chosen, the  base frames for that orbifold are distributed according to the natural Riemannian volume on the frame bundle. This is made precise in Abert--Biringer~\cite{Abertunimodular}, but here we will always just work with the IRS directly.

\begin {exalph}[IRSs from shift spaces, see \S \ref{GPS}]
Let $N_0,N_1$ be two hyperbolic $n$-manifolds with totally geodesic boundary, such that each boundary is the disjoint union of two isometric copies of some fixed hyperbolic $(n-1)$-manifold $\Sigma$. 
Given a
sequence $\alpha=(\alpha_i)_{i\in\mathbb{Z}}\in\{0,1\}^{\mathbb{Z}}$,
 construct a  hyperbolic $n$-manifold $N_{\alpha}$  by gluing copies of $N_0,N_1$
according to the pattern prescribed by $\alpha$.

If $\nu$  is a shift invariant measure on $\{0,1\}^\BZ$,  construct a random framed hyperbolic $n$-manifold by picking $\alpha$  randomly with respect to $\nu$,  and then picking a $\vol$-random base frame from the frame bundle of the `center' block, i.e.\  the copy of $N_0$ or $N_1$  corresponding to $0\in \BZ$.  This defines an IRS of $SO(n,1)$. Moreover, we show that if $N_0,N_1$ embedded in non-commensurable compact arithmetic $n$-manifolds $M_0,M_1$, and $\alpha $ is not supported on a periodic orbit, then the IRS is not induced by a lattice.\end {exalph}

 In some sense, these shift examples are still quite `finite', in that the manifolds involved are constructed as gluings of compact pieces.   So for instance, there is a universal upper bound for the injectivity radius  at every point of every manifold in the support of such an IRS. Here is an example that is more truly `infinite type', and  where the injectivity radius is not bounded above. Note:  one could also construct IRSs with unbounded injectivity radius by summing an  appropriate sequence of lattice IRSs against a geometric series, but  the following examples are ergodic.

\begin {exalph}[Random trees of pants, see \S \ref{trees}]
 Let $S $ be a topological surface constructed by gluing together pairs of pants glued together in the pattern dictated by an infinite $3$-regular tree. If  the set of simple closed curves on $S$  along which two pairs of pants are glued is written $\mathcal C$, then one can produce a hyperbolic metric on $S$  by specifying \emph {Fenchel--Nielsen coordinates}, which consist of a choice of length and twist parameter $(l_c,t_c) \in (0,\infty) \times S^1$  for each curve $c\in C$.
 
 Fix a probability measure on $(0,\infty)$, consider $S^1$  with the Lebesgue probability measure and fix a `center'  pair of pants on $S$. Construct a random framed hyperbolic surface by  choosing $(l_c,t_c)$  independently and randomly from $(0,\infty) \times S^1$, equipped with the product measure, and choosing a base frame $\vol$-randomly from the frame bundle of the center pair of pants.  The result is a random framed hyperbolic surface corresponding to an ergodic IRS of $SO(2,1)$. When $\nu$ is chosen appropriately, this IRS is not induced  from an IRS of a lattice, and when $\nu$  has unbounded support,  the injectivity radius at the base frame is not bounded above.
\end {exalph}

In three dimensions, we have already seen a nontrivial finitely generated IRS with infinite covolume: the IRS induced by the infinite cyclic cover $\hat M$ of a hyperbolic $3$-manifold fibering $M$ over the circle. This  cover $\hat M$  is homeomorphic to $S \times \BR$, where $S $ is the fiber, and geometrically it is \emph {doubly degenerate}, see \S \ref{manifolds}. In \S \ref{thick} we  generalize this example by showing that the basic idea behind the shift IRS construction can be performed in three dimensions without producing infinitely generated fundamental group. Namely,

\begin {exalph}[IRSs supported on surface groups, see \S \ref{thick}]
Let $\Sigma$  be a closed, orientable surface and $\phi_1,\ldots,\phi_n : \Sigma \longrightarrow \Sigma$ be  pseudo-Anosov homeomorphisms that freely generate a Schottky subgroup of $\mathrm{Mod}(\Sigma)$. Choose a sequence $(w_i)$ of words in the letters $\phi_1,\ldots,\phi_n $ with $|w_i|\to \infty$ and let $M_i$ be the hyperbolic mapping torus with monodromy $w_i$. 
 After passing to a subsequence, the measures $(\mu_{M_i})$  weakly converge  to an IRS $\mu$ of $SO(3,1)$ that is supported on uniformly thick, doubly degenerate hyperbolic $3$-manifolds homeomorphic to $\Sigma \times \BR$.  Moreover, if the words $(w_i)$ are chosen appropriately, no manifold in the support of $\mu$  covers a finite volume manifold, and in particular $\mu$ is not induced from an IRS of any lattice. 
\end {exalph}

The homeomorphisms $\phi_1,\ldots,\phi_n$ play the role of the building blocks $N_0,N_1$  in the shift examples, in the sense that  the  geometry of the mapping torus of each $\phi_k$ roughly appears as a `block' in the manifold $M_n$ whenever $\phi_k$  is a letter in $w_n$, and these blocks will persist in the limit IRS. Making this intuition precise requires us to use Minsky's model manifold machinery for understanding ends of hyperbolic $3$-manifolds up to bilipschitz equivalence \cite{Minskybounded,Minskyclassification1}, although in our setting we only need his earliest version for thick manifolds.  Although it seems plausible that one could use his machinery to construct the limit IRS directly, it is much easier to proceed as we do above through a limiting argument.


\subsection*{Further questions}

The topology of the space of IRSs of $SO(n,1)$ is quite rich---indeed, it is built on the already rich structure of the Chabauty topology. While a complete understanding of it (as in the higher rank case) is certainly intractable, we are interested in the following question, which we phrase more generally:
 
\begin {questalph}\label {sofic}Is every ergodic IRS of a simple Lie group $G $ other than $\mu_G $ a weak limit of IRSs corresponding to lattices of $G $?
\end {questalph}

It is easy to see that all the examples in $SO(n,1)$  that we have described are weak limits of lattice IRSs, and this is trivially true for higher rank $G$.  Question \ref{sofic}  is a continuous analogue of  a question of Aldous--Lyons \cite[Question 10.1]{Aldousprocesses},  which asks whether every unimodular random graph is a weak limit of finite graphs,  and which is in turn a generalization of the question of whether all groups are sofic.

While the doubly degerenate examples in dimension three are finitely generated, and there are no such examples in dimension 2,  the following question is open:

\begin{questalph} Are there non-lattice IRSs in $\SO(n, 1), n \ge 4$, which are finitely generated with positive probability? \end{questalph}

 Note that it is already unknown whether lattices in $SO(n,1)$ may have infinite, infinite index finitely generated normal subgroups.

For some groups there are no constructions of IRSs known beyond lattices and their normal subgroups. We may ask the following questions. 

\begin{itemize}
\item Are there non-lattice, almost surely irreducible ergodic IRSs in product groups such as $\PSL_2(\RR) \times \PSL_2(\RR)$?
\item Are there examples of ergodic IRSs in $\SU(n, 1), n \ge 1$, which are not induced from lattices?
\end{itemize}


\medskip

\noindent \textbf{Acknowledgments.} This research was supported by the MTA Renyi ``Lendulet" Groups and Graphs Research Group, the NSF, the Institut Universitaire de France, the ERC Consolidator Grant 648017, the ISF, the BSF and the EPSRC.

\setcounter{section}{10}


\section{Limit sets and induced IRSs}
\label{sec:rank1}


\subsection{Induction}\label {sec:induction}
To begin with, let $\Gamma$ be a
lattice in a Lie group $G$ and suppose that $\Gamma $ contains
a normal subgroup $\Lambda$; we construct an
IRS supported on the conjugacy class of $\Lambda$ as follows.  The map $G \ni g \mapsto g ^ {- 1} \Lambda g \in \sub_G $ factors as
$$G \longrightarrow \Gamma \backslash G \longrightarrow \sub_G, $$
and the second arrow pushes forward Haar measure on $\Gamma \backslash G$ to an IRS $\mu_\Lambda $ of $G $.  Geometrically, if
$D\subset G$ is a fundamental domain
for $\Gamma$ and $\overline{D}$ is its image in $G/\Lambda$, then a
$\mu_{\Lambda}$-random subgroup is the stabilizer in $G$ of a random point in
$\overline{D}$. If $\Lambda$ is a sublattice of $\Gamma $ this is coherent with the
previous definition.

This construction produces new examples of invariant random subgroups of rank one Lie groups.  Namely, lattices in $\R$-rank $1$ simple Lie groups are Gromov hyperbolic and it follows from \cite[Theorem 5.5.A]{GromovHyp} that they contain infinite, infinite index normal subgroups. In other words, the Margulis normal subgroup theorem fails for these
groups. Note that if $G =\SO(1,n)$ or
$\Su(1,n)$, this can be seen for example because in any dimension there are
compact real or complex hyperbolic manifolds with positive first Betti number. Now any such infinite, infinite index normal subgroup $\Lambda $ of a lattice in one of these groups $G $ gives an IRS supported on the conjugacy class of $\Lambda$.  In particular, Theorem \ref {sz} fails for these $G $.

The above is a special case of \emph{induction} of an IRS of  a lattice $\Gamma$ to an IRSs of $G $.    To define this more carefully, note that if  $\Gamma$ is a finitely generated group,  then an invariant random subgroup of $\Gamma $ is just a probability measure $\mu\in\mathcal{P}(\sub_{\Gamma})$ that is invariant under conjugation.  Examples include the Dirac mass at a normal subgroup, and the mean over the (finitely-many) conjugates of a finite index subgroup of a normal subgroup.  Less trivially, Bowen \cite {Bowen1} has shown that there is a wealth of invariant random subgroups of free groups.

So, let $\mu $ be an IRS of a lattice $\Gamma $ in a Lie group $G $. Define the IRS of $G $ \it induced \rm from $\mu$ to be the
random subgroup obtained by taking a random conjugate of $\Gamma$ and then
a $\mu$-random subgroup in this conjugate (which is well-defined because
of the invariance of $\mu$). Formally, the natural
map $$G\times\sub_{\Gamma} \ni (g,\Lambda) \longrightarrow g \Lambda g^{-1} \in\sub_G$$
factors through the quotient of $G \times \sub_\Gamma$ by the $\Gamma $-action
$(g,\Lambda)\gamma=(g\gamma, \gamma^{-1}\Lambda\gamma)$.  This quotient has a natural $G$-invariant probability measure, and we define our IRS to be the push forward of this measure by the factored map
$(G\times\sub_{\Gamma})/\Gamma\rightarrow\sub_G$.

\subsection {Limit sets of rank one IRSs}
We show in this section that IRSs in rank one groups have either full or empty limit set.  This is a trivial application of Poincar\'e recurrence that works whenever one has a reasonable definition of limit set.

Let $G$ be a simple Lie group with $\rank_\BR (G) = 1 $. The symmetric space $G/K $ is a Riemannian manifold with pinched negative curvature and therefore has a natural Gromov boundary $\partial_\infty X $.  The \it limit set \rm $\Lambda (H) $ of a subgroup $H < G $ is the set of accumulation points on $\partial_\infty X $ of some (any) orbit $H x $, where $x \in X $.  We say that $H $ is \it non-elementary \rm if $\Lambda (H)$ contains at least three points.

\begin {prop} \label {limits}
Suppose that $G$ is a simple Lie group with $\rank_\BR (G) = 1 $ and that $H < G $ is a closed, non-elementary subgroup.  Let $A \unlhd H $ be the compact, normal subgroup consisting of all elements that fix pointwise the union of all axes of hyperbolic elements of $H $.  Then if $\mu $ is an IRS of $H $, either
\begin {enumerate}
\item the limit set of a $\mu $-random $\Gamma < H $ is equal to $\Lambda (H)  $, or
\item $\mu \left (\sub_A \right) >0 $.
\end {enumerate}
\end {prop}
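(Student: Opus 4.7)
The plan is to apply Poincar\'e recurrence to the $\mu$-preserving conjugation action by powers of hyperbolic elements of $H$. Fix once and for all a countable family $\{h_i\} \subset H$ of hyperbolic elements whose attractor/repeller pairs $(p^-_{h_i}, p^+_{h_i})$ are dense in $\Lambda(H)^2 \setminus \Delta$; this is possible since $H$ is non-elementary. For each $i$, the $\mathbb{Z}$-action $n \cdot \Gamma = h_i^n \Gamma h_i^{-n}$ preserves $\mu$, so Poincar\'e recurrence produces, for $\mu$-a.e.\ $\Gamma$, a sequence $n_k \to \infty$ with $h_i^{n_k} \Gamma h_i^{-n_k} \to \Gamma$ in Chabauty. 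The proof then has two stages: first, a dichotomy $\Lambda(\Gamma) \in \{\emptyset, \Lambda(H)\}$ almost surely; second, the observation that compact $\Gamma$ must lie in $A$.

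The key input for the dichotomy is that $\Gamma \mapsto \Lambda(\Gamma)$ is lower semi-continuous in Chabauty (any orbit point $\gamma x_0$ of $\Gamma'$ is a limit of orbit points of nearby subgroups), and that if $p^+_{h_i} \notin \Lambda(\Gamma)$ then $h_i^n$ Hausdorff-contracts $\Lambda(\Gamma)$ into $\{p^-_{h_i}, p^+_{h_i}\}$. Combining these via recurrence, $\mu$-a.s.\ for every $i$: either $p^+_{h_i} \in \Lambda(\Gamma)$ or $\Lambda(\Gamma) \subset \{p^-_{h_i}\}$. Density of $\{p^+_{h_i}\}$ in $\Lambda(H)$ rules out $\emptyset \neq \Lambda(\Gamma) \subsetneq \Lambda(H)$ except possibly for a singleton, and density of the $\{p^\pm_{h_j}\}$ rules out the singleton case since a fixed point $q = \Lambda(\Gamma)$ cannot equal $p^-_{h_j}$ or $p^+_{h_j}$ for every $j$. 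Thus $\mu$-a.s.\ $\Lambda(\Gamma) = \Lambda(H)$ or $\Gamma$ is compact; if conclusion (1) of the proposition fails, the latter holds with positive measure.

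Assume now $\Gamma$ is compact and recurrent under $h$-conjugation for $h = h_i$. If some $\gamma \in \Gamma$ fails to fix $\ell_h$ pointwise, Chabauty recurrence forces the existence of $\gamma_{n_k} \in \Gamma$ with $h^{n_k}\gamma_{n_k} h^{-n_k} \to \gamma$. Those $\gamma_{n_k}$ fixing $\ell_h$ pointwise commute with $h$ and only approximate themselves; those not fixing $\ell_h$ have fixed set $F_{\gamma_{n_k}}$ whose boundary misses $p^-_h$, so $d(h^{-n_k} y, F_{\gamma_{n_k}}) \to \infty$ and a standard displacement bound in $\mathbb{H}^n$ forces $\gamma_{n_k} \to e$. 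A short Lie-theoretic computation with the $\mathrm{Ad}(h)$-eigenspace decomposition then shows that the only non-trivial Chabauty limits of $h^{n_k}\gamma_{n_k} h^{-n_k}$ with $\gamma_{n_k} \to e$ lie in the horospherical subgroup $N^+_{p^+_h}$, which is unipotent and so meets the compact group $\Gamma$ only in $\{e\}$. Hence $\gamma = e$, a contradiction. So $\Gamma$ fixes every $\ell_{h_i}$ pointwise, and density of $\bigcup_i \ell_{h_i}$ in the union of all axes of hyperbolic elements of $H$ yields $\Gamma \subset A$ almost surely on the compact event, giving (2).

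The main technical obstacle lies in the compactness step. Chabauty recurrence of the conjugates of $\Gamma$ does not upgrade to Hausdorff convergence inside $G$ (the conjugate subgroups may slide off to infinity), so rigidity must be extracted from the interplay of the two Chabauty conditions with the $\mathrm{Ad}(h)$-dynamics; the eigenspace analysis is the device that lets the compactness of $\Gamma$ finally rule out the only surviving limiting direction, namely the horospherical one. The limit-set stage, by contrast, is essentially immediate from semi-continuity and the attractor/repeller dynamics on $\partial_\infty X$.
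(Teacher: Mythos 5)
Your first stage is workable, though differently organized than the paper's (which gets the dichotomy $\Lambda(\Gamma)\in\{\emptyset,\Lambda(H)\}$ by a direct monotone limit of the nested events $\{\Lambda(\Gamma)\cap h^i(U)=\emptyset\}$ rather than by recurrence plus semicontinuity); note two slips there: the contraction hypothesis should be $p^-_{h_i}\notin\Lambda(\Gamma)$, not $p^+_{h_i}$, and lower semicontinuity of $\Lambda$ is justified via fixed points of hyperbolic elements of $\Gamma$, not via single orbit points, which need not accumulate where they visit. The genuine gap is in the compactness step, precisely where you locate the ``main technical obstacle.'' The assertion that an element $\gamma_{n_k}\in\Gamma$ not fixing $\ell_h$ pointwise has $\partial F_{\gamma_{n_k}}$ disjoint from $p^-_h$ is unjustified and false: $F_{\gamma_{n_k}}$ is only forced to contain the global fixed set of $\Gamma$, and it can be asymptotic to $p^-_h$ without containing $\ell_h$. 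In $\BH^3$ take $\gamma$ a rotation about a geodesic sharing the endpoint $p^-_h$ with $\ell_h$ but not the other endpoint; writing $\gamma=grg^{-1}$ with $g$ lower-triangular unipotent and $r$ a rotation about $\ell_h$, one computes $h^{n}\gamma h^{-n}\to r$. So for such approximants $d(h^{-n_k}y,F_{\gamma_{n_k}})$ stays bounded, $\gamma_{n_k}$ does \emph{not} tend to $e$, and the conjugates converge to a nontrivial elliptic element: neither branch of your dichotomy applies, and the $\Ad(h)$-eigenspace device, which requires $\gamma_{n_k}\to e$, is unavailable. This missing case can be repaired (the fixed sets $h^{n_k}F_{\gamma_{n_k}}$ then converge locally to a set containing $\ell_h$, so the limit still fixes $\ell_h$ pointwise), but there is a further uniformity issue when $\partial F_{\gamma_{n_k}}$ merely approaches $p^-_h$ at rate comparable to the contraction of $h^{-n_k}$, and none of this analysis appears in your write-up.

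The paper avoids classifying Chabauty-recurrent compact groups altogether. It picks a subgroup $\Gamma$ in the support of the positive-measure event $\{\Lambda(\Gamma)=\emptyset,\ \Gamma\not\subseteq A\}$, uses a hyperbolic $h\in H$ whose axis is not contained in the fixed set $F_\Gamma$ to show that $\max_{\gamma\in\Gamma'}\dist(\gamma h^i(x),h^i(x))\to\infty$ uniformly over all finite $\Gamma'$ in a small neighborhood $\mathcal U$ of $\Gamma$ in $\sub_H$, and concludes that infinitely many of the conjugates $h^{-i}\mathcal U h^i$ are pairwise disjoint --- contradicting that they all carry the same positive $\mu$-measure. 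This wandering-set argument (the contrapositive of the recurrence you invoke) replaces the delicate limit analysis with a one-line displacement estimate, and I would recommend adopting it for your second stage.
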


In particular, any IRS of $G $ without an atom at $id $ has limit set $\partial_\infty X $.  As a further example explaining condition (2), note that $H = \SO (2,1) \times \SO(3)$ embeds in $\SO (5, 1) $ and any IRS of $A=\SO (3) $ induces an IRS of $H $ with empty limit set.

\begin {proof}
Assume that with positive $\mu $-probability, the limit set of a subgroup $\Gamma < H $ is smaller than $\Lambda (H) $.  As $\Lambda (H) $ is second countable, there exists an open set $U \subset \Lambda (H) $ such that $\Lambda (\Gamma) \cap U = \emptyset $ with $\mu $-probability $\epsilon >0 $.  Since $H $ is non-elementary, there is a hyperbolic element $h \in H $ with repelling fixed point $\lambda_-\in U $ (see \cite [Theorem 1.1]{Hamenstadtrank}).  The element $h $ acts on $\partial_\infty X $ with north-south dynamics \cite [Lemma 4.4] {Hamenstadtrank}, and we let $\lambda_+ \in \partial_\infty X$ be its attracting fixed point.  Then for each $i $, the $\mu $-probability that $\Lambda (\Gamma) \cap h ^ i (U) = \emptyset $ is also $\epsilon $, by $H $-invariance of $\mu $.

Passing to a subsequence, we may assume that the sets $h ^ i (U) $ form a nested increasing chain with union $\partial_\infty X \setminus \lambda_+ $.  Therefore, passing to the limit we have that the $\mu $-probability that $\Lambda (\Gamma) \subset \{\lambda_+\} $ is $\epsilon $.  The $\mu $-probability that $\Lambda (\Gamma) = \{\lambda_+\} $ cannot be positive if $H $ is non-elementary, since the $H $-orbit of $\lambda_+ $ is infinite \cite [Theorem 1.1] {Hamenstadtrank} and there is equal probability of having limit set any translate of $\lambda_+ $.
Therefore, $\Lambda (\Gamma) = \emptyset $ with positive $\mu $-probability.

Suppose now that $(2) $ does not hold: then with positive $\mu $-probability we have $\Lambda (\Gamma) =\emptyset $ and $\Gamma \notin A $.  Pick some $\Gamma < H$ that satisfies these two conditions such that in every neighborhood of $\Gamma $ the $\mu $-probability of satisfying the two conditions is positive.  Since $\Gamma$ has empty limit set, $\Gamma$ must be finite and therefore has a nonempty fixed set $F_\Gamma \subset X $, which is a totally geodesic hyperplane in $X $.  Since $\Gamma $ is not contained in $A $, there is some hyperbolic element $h \in H $ whose axis is not contained in $F_\Gamma $.  Then fixing $x \in X $,
$$\max_{\gamma \in \Gamma} \,\dist \left(\gamma \circ h ^ i (x), h ^ i (x)\right) \  \longrightarrow \ \infty $$ as $i $ increases.
Moreover, this is true uniformly over some neighborhood $\mathcal U $ of $\Gamma \in \sub_H$.  Namely, for sufficiently small $\mathcal U $, we have that as $i $ increases,
$$\inf_{\text {finite }  \Gamma' \in \mathcal U} \ \ \max_{\gamma \in \Gamma'} \ \dist \left(\gamma \circ h ^ i (x), h ^ i (x)\right)\   \longrightarrow  \ \infty. $$
We can rephrase this by saying that as $i $ increases, $$\inf_{\text {finite }  \Gamma' \in \, h ^ {- i}\mathcal U h^ i} \ \ \max_{\gamma \in \Gamma'} \ \dist \left(\gamma (x), x\right)\   \longrightarrow  \ \infty. $$
Thus for an infinite collection of indices $i $, the subsets $h ^ {- i} \mathcal U h^ i $ are pairwise disjoint in $\sub_H $. This is a contradiction, since they all have the same positive $\mu $-measure.
\end{proof}

If $\Gamma < G $ is a subgroup whose limit set is not the full boundary $\partial_\infty G $, then there is no upper bound for the local injectivity radius $inj_{\Gamma \backslash X} (x) $ at points $x \in \Gamma\backslash X$ .  
Suppose that $\mu$ is an ergodic IRS of $G $. One can ask weather the function $$\mathrm {Sub}_{G} \to \BR, \ \ \Gamma \mapsto inj_{\Gamma \backslash X} ( [id]) $$ necessarily have finite $\mu$-expected value.  Here, $[id] $ is the projection of the identity element under $G \longrightarrow \Gamma \backslash X = \Gamma \backslash G / K $.
Dropping the ergodicity condition one can easily construct convex combinations of lattice IRSs with infinite expected injectivity radius.  Also, in Section \ref {trees} we construct ergodic IRSs that have \it unbounded \rm injectivity radius.


\section{Exotic IRSs in dimensions two and three}

\subsection {Random trees of pants --- examples in $G =\SO(2,1)$} \label {trees} The idea for this construction was suggested by Lewis Bowen.

Suppose that $S $ is a topological surface obtained by gluing together pairs of pants in the pattern dictated by a 3-valent graph $X $, i.e. properly embed $X $ in $\R ^ 3 $ and let $S $ be the boundary of some regular neighborhood of it.  Let $\mathcal C $ be the set of simple closed curves on $S $ corresponding to the boundary components of these pants.  Given a function $$\mathcal C \to (0,\infty) \times S ^ 1, \ \ c \mapsto (l_c, t_c),$$ called \it Fenchel-Nielsen coordinates, \rm we can construct a hyperbolic structure on $S$ by  gluing together hyperbolic pairs of pants whose boundary curves have lengths $l_c$, and where the two pairs of pants adjacent to a boundary curve are glued with twisting parameter $t_c$. The resulting structure is well-defined up to an isometry  that fixes the homotopy class of every curve in $\mathcal C$. See \cite {Matsuzakihyperbolic} for finite type surfaces, and \cite{Alessandrinifenchel}  for  a discussion of issues in the infinite type case.

Pick a Borel probability measure $\nu $ on $(0,\infty) $ and consider $S ^ 1 $ with Lebesgue probability measure $\lambda $.  We then have a probability measure $(\nu \times \lambda )^ {\mathcal C} $ on the moduli space $\mathcal M (S)$.  Fix some pair of pants $P\subset S $ bounded by curves in $\mathcal C $.   We create an IRS $\mu $ of $G$ as follows.  Randomly select an element $[d] \in \mathcal M (S)$, represented by a hyperbolic metric $d $ on $S $.  Let $P_{d} \subset S$ be the totally $d $-geodesic pair of pants in the homotopy class of $P $ and pick a base frame $f $ on $P_{d} $ randomly with respect to its Haar probability measure $m_{d} $.  The stabilizer $\stab(d,f) $ of $f $ under the $G $-action on the frame bundle of $(S, d) $ is a $\mu $-random subgroup of $G$.

More formally, if $A \subset \sub_G $ is a Borel subset, let
$$\mu (A) = \int_{d \in \mathcal M (S) } m_{d} \left ( \substack{ \text { frames } f \text { on } P_{d}\\ \text {with } \stab(d,f) \in A }\right ) \, d(\nu \times \lambda)^ {\mathcal C}. $$
\begin{prop}
If the $3 $-valent graph $X $ is vertex transitive, then $\mu $ is $G $-invariant.  Moreover, if $X $ is also infinite then $\mu $ is ergodic.
\end{prop}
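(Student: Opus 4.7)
My approach to $G$-invariance begins with the following symmetry. The Fenchel--Nielsen measure $(\nu \times \lambda)^{\mathcal C}$ is an iid product indexed by $\mathcal C$, so it is invariant under any permutation of $\mathcal C$ induced by a graph automorphism of $X$. Vertex-transitivity supplies, for every pant $Q$ of $S$, a graph automorphism $\sigma_Q$ with $\sigma_Q(P) = Q$, inducing a self-homeomorphism $h_Q : S \to S$ carrying $P$ to $Q$. For any hyperbolic structure $d$, the pullback $h_Q^* d$ has Fenchel--Nielsen coordinates obtained by $\sigma_Q$-permuting those of $d$, so $h_Q^* d$ and $d$ are equidistributed. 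Furthermore, $h_Q : (S, h_Q^* d) \to (S, d)$ is an isometry sending $P_{h_Q^* d}$ onto $Q_d$ and inducing a $G$-equivariant bijection of frame bundles that preserves the stabilizer of each corresponding frame.

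Granting this, I derive $g_* \mu = \mu$ from a change-of-variables. Begin with
$$ g_* \mu(A) \;=\; \int_{\mathcal M(S)} \int_{P_d} \mathbf{1}_A(\stab(d, gf))\,dm_d(f)\,d(\nu \times \lambda)^{\mathcal C}(d), $$
substitute $f' = gf$ (preserving Haar measure on the frame bundle of $(S, d)$), and split by pant, $g P_d = \bigsqcup_Q (g P_d \cap Q_d)$. On each piece, perform the measure-preserving change of variables $(d, f') \mapsto (d'', f'') := (h_Q^* d, h_Q^{-1}(f'))$; this preserves stabilizers by the key lemma. By $G$-equivariance of $h_Q^{-1}$, the image of $gP_d \cap Q_d$ is $gP'_{d''} \cap P_{d''}$, where $P' := \sigma_Q^{-1}(P)$. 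As $Q$ ranges over all pants so does $P'$, and the $g$-translates $\{gP'_{d''}\}_{P'}$ partition the frame bundle of $(S, d'')$ up to measure zero, so $\bigsqcup_{P'} (g P'_{d''} \cap P_{d''}) = P_{d''}$. Summing over $Q$ recovers $\int \int_{P_{d''}} \mathbf{1}_A(\stab(d'', f''))\,dm_{d''}(f'')\,d(\nu \times \lambda)^{\mathcal C}(d'') = \mu(A)$.

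For ergodicity, assume $X$ is infinite and let $A \subset \sub_G$ be a $G$-invariant Borel set. Since the right $G$-action on the frame bundle $\Gamma_d \backslash G$ of $(S, d)$ is transitive and $A$ is $G$-invariant, $\mathbf{1}_A(\stab(d, f))$ depends only on $d$, defining a Borel function $F : \mathcal M(S) \to \{0, 1\}$ with $\mu(A) = \mathbb{E}[F]$. The isometry argument of the first paragraph yields $F(h_\sigma^* d) = F(d)$ for every $\sigma \in \mathrm{Aut}(X)$, so $F$ is invariant under the Fenchel--Nielsen coordinate permutation action of $\mathrm{Aut}(X)$ on $\mathcal M(S)$. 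When $X$ is infinite and vertex-transitive, every $\mathrm{Aut}(X)$-orbit on $\mathcal C$ is infinite (an edge stabilizer is contained in a vertex set-stabilizer, which has infinite index in $\mathrm{Aut}(X)$), so $(\nu \times \lambda)^{\mathcal C}$ is ergodic under this action by the standard ergodicity criterion for iid shifts. Therefore $F$ is almost surely constant and $\mu(A) \in \{0, 1\}$.

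The main technical obstacle is justifying the change-of-variables in the $G$-invariance step, in particular the $G$-equivariance of $h_Q^{-1}$ on frame bundles (which requires a consistent choice of lifts in $G$ so that the holonomy subgroups conjugate correctly) and the partitioning identity $\bigsqcup_{P'} g P'_{d''} = \mathrm{frame\ bundle}(S, d'')$ up to measure zero. Once these are granted, the rest is double-integral bookkeeping made possible by the vertex-transitive symmetry.
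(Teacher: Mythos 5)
Your argument is correct and is essentially the paper's: both proofs rest on the same two facts --- that $g$ acts on each frame bundle of $(S,d)$ preserving the volume measure, and that graph automorphisms of $X$ induce $(\nu\times\lambda)^{\mathcal C}$-preserving maps of $\mathcal M(S)$ carrying $P$ to any other pant --- and the ergodicity arguments (reduce a conjugation-invariant $A$ to a function of $d$ alone via transitivity of $G$ on the frame bundle, then use infiniteness of the $\mathrm{Aut}(X)$-orbits on $\mathcal C$) are identical. The paper organizes the invariance computation by subdividing $A$ according to which pant contains $gf$, rather than decomposing $gP_d$, which sidesteps the one assertion in your write-up that is not automatic: that $Q\mapsto P'=\sigma_Q^{-1}(P)$ is a bijection of the set of pants, without which the pieces $gP'_{d''}\cap P_{d''}$ need not reassemble into $P_{d''}$ with multiplicity one. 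This is false for a careless choice of the $\sigma_Q$ (already on the $3$-regular tree one can choose $\sigma_{Q_1}^{-1}(P)=\sigma_{Q_2}^{-1}(P)$ for distinct neighbors $Q_1,Q_2$ of $P$), but it can be arranged by taking $\{\sigma_Q\}$ to be a simultaneous left and right transversal of the stabilizer of $P$ in $\mathrm{Aut}(X)$, which always exists.
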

\begin {proof}
Fix $g \in G $ and some Borel subset $A \subset \sub_G $.  Subdividing $A $  if necessary, we may assume that there is some pair of pants $P' \subset S $ bounded by curves in $\mathcal C $ such that if $d$ is a hyperbolic metric on $X $ as above and $f $ is a frame on $P_{d} $, then
$$\stab (d, f) \in A \implies gf \in P'_{d}, $$ where $P'_{d} $ is the totally $d $-geodesic pair of pants homotopic to $P' $.  Then we have
\begin {align*}
\mu (A) & = \int_{d \in \mathcal M (S)} m_{d} \left ( \substack{ \text { frames } f \text { on } P'_{d}\\ \text {with } \stab(d,f) \in gA }\right ) \, d(\nu \times \lambda)^ {\mathcal C} \\
& = \int_{d' \in \mathcal M (S)} m_{d'} \left ( \substack{ \text { frames } f \text { on } P_{d'}\\ \text {with } \stab(d',f) \in gA }\right ) \, d(\nu \times \lambda)^ {\mathcal C} \\
& = \mu (g A).
\end {align*}
Here, the first equality follows from our assumption on $A $ and the fact that $g $ acts as a measure preserving homeomorphism on the frame bundle of $(S, d) $.  For the seco nd, let $v \mapsto v' $ be a graph isomorphism of $X $ taking the vertex corresponding to $P $ to that corresponding to $P' $.  Then there is an induced map $d \mapsto d' $ on $\mathcal M (S)$; this map preserves the measure $(\nu \times \lambda)^ {\mathcal C}$, so the second inequality follows.

Now suppose that $X $ is infinite and $A \subset \sub_G $ is a $G $-invariant set.  Define $$\overline A = \left\{\, d \in \mathcal M (S) \, | \, \exists \text { a frame } f \text { on } P_{d} \text { with } \stab(d,f) \in A \, \right \}. $$
 The set $\overline A $ is invariant under the action on $\mathcal M (S)$ corresponding under Fenchel-Nielsen coordinates to the subgroup of $\mathcal C $-permutations arising from graph automorphisms of $X $.   As $X $ is infinite and vertex transitive, for every finite subset $F $ of $\mathcal C $ there is such a permutation such that $F $ is disjoint from its image.  It follows from a standard argument that the $(\nu \times \lambda)^ {\mathcal C}$-measure of $\overline A $ is either $0 $ or $1 $.  As $A $ is $G $-invariant, it can be recovered from $\overline A $ as the set of stabilizers of all frames on $(S, d) $ where $d $ ranges through $\overline A $.   Therefore, $\mu (A) $ is either $0 $ or $1 $.
\end{proof}

If $X $ is infinite and vertex transitive and $\nu$ is non-atomic and supported within $(0,\epsilon) $, where $\epsilon $ is less than the Margulis constant, the measure $\mu $ cannot be induced from a lattice.  For then with full $(\nu \times \lambda)^ {\mathcal C}$-probability, the length parameters of the Fenchel-Nielsen coordinates of a point in $\mathcal M (S)$ cannot be partitioned into finitely many rational commensurability classes.  In a finite volume hyperbolic surface, there are only finitely many closed geodesics with length less than $\epsilon $.  If a hyperbolic surface isometrically covers a finite volume hyperbolic surface, then the lengths of its closed geodesics that are shorter than $\epsilon $ can be partitioned into finitely many rational commensurability classes.  Therefore, at most a measure zero set of Fenchel-Nielsen coordinates give hyperbolic structures on $S $ that isometrically cover finite volume hyperbolic surfaces.  This shows that $\mu $ cannot be induced from a lattice.

There is one additional feature of this example that is of interest:

\begin{prop}\label {treeinjectivity}
If $X $ is an infinite $3 $-valent tree and $\nu $ has unbounded support, then the injectivity radius at the base frame of a framed hyperbolic surface has infinite $\mu $-essential supremum.
\end{prop}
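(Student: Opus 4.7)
Fix $R>0$; the goal is to show that $\mu\bigl(\{\inj_x \geq R\}\bigr) > 0$, where $x$ denotes the base point. The plan is to exhibit a positive $\mu$-measure event in Fenchel-Nielsen coordinates on which the systole of a large finite subsurface around $P$ exceeds $2R$, which forces the injectivity radius at $x$ to be at least $R$.

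First, let $Y\subset X$ be a finite subtree containing the base vertex $v_0$ (corresponding to $P$), and let $F_Y\subset\mathcal{C}$ be the pants curves corresponding to edges of or adjacent to $Y$. Write $S_Y\subset S$ for the subsurface spanned by the pants indexed by $Y$. Since $\nu$ has unbounded support, $\nu([L,\infty))>0$ for every $L>0$, so the event $E_L := \{\,l_c\geq L \text{ for every } c\in F_Y\}$ has positive probability $\nu([L,\infty))^{|F_Y|}$. I would choose $|Y|$ large enough in relation to $R$ and $L$ that area considerations --- combined with the thin, short-seamed geometry of pairs of pants with long boundaries --- ensure the ball $B_R(x)\subset S$ is contained in the interior of $S_Y$. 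Then any essential geodesic loop at $x$ of length at most $2R$ lies inside $S_Y$, and by $\pi_1$-injectivity of the essential inclusion $S_Y\hookrightarrow S$ it represents a non-trivial element of $\pi_1(S_Y)$.

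It then suffices to show that for a positive-measure set of twist parameters on $E_L$, the systole of $S_Y$ exceeds $2R$. Since the shortest closed geodesic on any hyperbolic surface is simple, this reduces to controlling the lengths of all simple closed geodesics on $S_Y$. The pants curves in $F_Y$ satisfy $l_c\geq L\geq 2R$ by construction; for the simple closed curves crossing our pants decomposition, only finitely many lie below any given length bound on a fixed finite-type surface, and the length of each is a continuous function of the twists that grows to infinity under Dehn twisting about any pants curve it crosses. Hence the set of Fenchel-Nielsen parameters on which the systole of $S_Y$ exceeds $2R$ is open, meets $E_L$ non-trivially, and so has positive $\mu$-measure.

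The hard part will be making the twist argument rigorous as the topology of $S_Y$ grows with $R$: one must verify that the ``thick'' open condition on twists has positive Lebesgue measure in the Fenchel-Nielsen parametrization, uniformly enough to combine with the length conditions imposed on $E_L$. I expect this to follow from standard Teichm\"uller-theoretic estimates (e.g.\ Mumford compactness on finite-type Teichm\"uller spaces) together with the finiteness of simple closed geodesics below any fixed length bound; this will yield $\mu(\inj_x\geq R)>0$ for every $R>0$, and hence the desired infinite essential supremum.
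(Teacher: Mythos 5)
Your overall strategy (exhibit, for each $R$, a positive-measure Fenchel--Nielsen event forcing $\mathrm{inj}_x\geq R$) is also the paper's, but the two steps you flag as needing work are exactly where the content lies, and neither is completed or completable by the route you sketch. The main gap is the systole step. You propose to find a positive-measure set of twists on which $\mathrm{sys}(S_Y)>2R$, using that lengths of transversal curves ``grow to infinity under Dehn twisting.'' But the twist parameter lives in the compact circle $S^1$ (a full Dehn twist returns the same hyperbolic structure with a different marking), so you cannot twist to infinity inside the parameter space: lengthening one isotopy class just makes its twisted image the new short candidate. Likewise, openness of $\{\mathrm{sys}(S_Y)>2R\}$ together with ``meets $E_L$'' does not give positive $(\nu\times\lambda)^{\mathcal C}$-measure, since $\nu$ may be atomic and a nonempty open set of length parameters can be $\nu$-null. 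The idea you are missing is that no twist argument is needed, because $X$ is a \emph{tree}: the dual graph of the pants decomposition has no cycles, so any essential geodesic loop based at $x$ that is not freely homotopic to a pants curve must enter and leave some pair of pants through the \emph{same} boundary component. The paper's preceding lemma shows that such an arc, in a pair of pants with boundary lengths in $[l,l+1]$, has length at least $(l-1)/2$ (concatenating with a boundary arc of length at most $(l+1)/2$ gives a loop homotopic to another boundary component), while any arc crossing such a pair of pants between distinct boundary components has length at least $\sinh(1/\sinh l)$ by the collar lemma. Hence on the event that all lengths in the $R$-ball of the tree lie in $[l,l+1]$, every loop at $x$ either backtracks inside that ball (length at least $(l-1)/2$) or crosses at least $R$ pants (length at least $R\sinh(1/\sinh l)$), for \emph{every} choice of twists.

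Secondly, the containment $B_R(x)\subset S_Y$ does not follow from $E_L=\{l_c\geq L\}$: with only a lower bound on lengths, the distance between distinct boundary components of a pair of pants can be arbitrarily small (it tends to $0$ as two of the boundary lengths tend to infinity), so a ball of radius $R$ can escape any fixed finite subtree on a positive-probability part of $E_L$; area counting does not help, since $B_R(x)$ may meet many pants each in a thin sliver. You need two-sided control such as $l_c\in[l,l+1]$, which still has positive $\nu$-probability for arbitrarily large $l$ because $\nu$ has unbounded support. Once you impose that, the quantitative containment comes from the same collar estimate, and at that point the paper's direct argument above applies and bypasses $S_Y$ and the systole entirely.
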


To prove the proposition, we need the following lemma.

\begin {lem}
Suppose $l > 0 $ and $P $ is a hyperbolic pair of pants with geodesic boundary all of whose boundary components have length in $[l, l+1]$. Let $\gamma $ be a geodesic segment in $P $ that has endpoints on $\partial P $ but is not contained in $\partial P $.  Then $$\length (\gamma) \geq \sinh\left ( \frac 1 {\sinh (l) }\right).$$ Also, if the endpoints of $\gamma $ lie on the same component of $\partial P $ then $\length (\gamma)\geq \frac {l- 1}2 $.
\end {lem}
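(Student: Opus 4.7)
The plan is to decompose the pair of pants $P$ into two isometric right-angled hexagons along its three \emph{seams} (the common perpendicular arcs between pairs of distinct boundary components) and then argue by case analysis on where the endpoints of $\gamma$ lie.

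First, suppose the endpoints of $\gamma$ lie on two different components $\partial_i,\partial_j$. Then $\length(\gamma)$ is bounded below by the distance $s_{ij}$ between these components, which equals the length of the corresponding seam. Applying the standard right-angled hexagon formula
\[
\cosh(s_{ij})\sinh(\ell_i/2)\sinh(\ell_j/2)=\cosh(\ell_k/2)+\cosh(\ell_i/2)\cosh(\ell_j/2),
\]
together with the hypothesis $\ell_1,\ell_2,\ell_3\in[l,l+1]$, a short computation (using $\cosh(\ell_k/2)\geq 1$ and elementary comparisons between $\sinh$ and $\cosh$) yields the claimed lower bound $\length(\gamma)\geq s_{ij}\geq\sinh(1/\sinh(l))$ on the seam length.

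Next, suppose both endpoints of $\gamma$ lie on the same component $\partial_i$, splitting it into subarcs $a$ and $a'$. I would first argue that $\gamma$ necessarily separates $P$ into two annular regions, each a neighborhood of one of the other two boundary components. The key topological point is that the alternative---a separation into a disk and a sub-pair-of-pants---would force $\gamma$ to be homotopic rel endpoints to a subarc of $\partial P$, which geodesic uniqueness (or Gauss--Bonnet applied to the would-be hyperbolic bigon) rules out. Granted this annular decomposition, $\gamma\cup a$ is freely homotopic to $\partial_j$ (say), so by the length-minimizing property of the geodesic $\partial_j$ in its free homotopy class, $\length(\gamma)+\length(a)\geq\ell_j$, and similarly $\length(\gamma)+\length(a')\geq\ell_k$. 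Adding these and using $\length(a)+\length(a')=\ell_i$ yields
\[
2\length(\gamma)\geq \ell_j+\ell_k-\ell_i\geq 2l-(l+1)=l-1,
\]
which is the second claimed bound. For the first bound in this case, I would either appeal to the Case 1 estimate applied to a seam that $\gamma$ must cross, or (once $l$ is large enough) note that $(l-1)/2$ already dominates $\sinh(1/\sinh(l))$.

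The main obstacle is justifying the topological dichotomy in Case 2 cleanly and verifying that the single bound $\sinh(1/\sinh(l))$ governs both cases uniformly in $l>0$; both reduce in the end to bookkeeping with the hexagon formula and elementary hyperbolic trigonometric inequalities.
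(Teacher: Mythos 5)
Your Case 2 argument is essentially the paper's: the paper also closes up $\gamma$ with one of the two boundary arcs (taking the shorter one, of length at most $(l+1)/2$), observes that the resulting loop is freely homotopic to another boundary geodesic, and concludes $\length(\gamma)\geq l-\frac{l+1}{2}=\frac{l-1}{2}$; your symmetrized version gives the same bound. Your Case 1 is a genuinely different route. The paper does not touch hexagon trigonometry: it reduces to $\gamma$ simple, doubles $P$ to a closed genus-$2$ surface, takes the closed geodesic $\bar\gamma$ homotopic to the double of $\gamma$ (length at most $2\length(\gamma)$), and applies the Collar Lemma --- a boundary geodesic of length at most $l+1$ crosses $\bar\gamma$, so the collar width $\sinh^{-1}\bigl(1/\sinh(\tfrac12\length\bar\gamma)\bigr)$ is at most $l$, which unwinds to the stated bound. (Strictly it unwinds to $\length(\gamma)\geq\sinh^{-1}(1/\sinh(l))$, not $\sinh(1/\sinh(l))$ --- the statement appears to carry a typo --- and the width step needs $(l+1)/2\leq l$, i.e.\ $l\geq1$.) Your seam-distance argument has the advantage of handling endpoints on distinct components with no reduction to simple arcs.

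Two concrete cautions. First, the ``short computation'' cannot succeed uniformly in $l>0$, and this is not a defect of your method: the lemma as stated fails for small $l$. For $l=\tfrac12$ and boundary lengths $(\tfrac32,\tfrac32,\tfrac12)$, the hexagon formula gives a seam of length $\cosh^{-1}(4.00\ldots)\approx 2.06$ between the two long components, whereas $\sinh(1/\sinh(\tfrac12))\approx 3.33$. So you must restrict to $l\geq1$ (say), exactly as the paper's proof implicitly does; this is harmless since the lemma is only invoked for large $l$ in Proposition \ref{treeinjectivity}, and with that restriction your asymptotics ($s_{ij}\gtrsim e^{-l/4}$ versus $\sinh(1/\sinh l)\sim 2e^{-l}$) do close the computation. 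Second, your fallback ``apply the Case 1 estimate to a seam that $\gamma$ must cross'' is a non sequitur: $\gamma$ does cross the seam joining the two far boundary components, but crossing a long geodesic gives no lower bound on $\length(\gamma)$. Your other fallback is the correct one: for $l$ large, $(l-1)/2$ dominates $\sinh(1/\sinh(l))$, so Case 2 satisfies the first inequality automatically. Finally, both your topological dichotomy and the paper's homotopy step require $\gamma$ simple; the paper disposes of this with an unexplained ``it suffices to assume $\gamma$ is simple,'' and you should supply the standard justification (it suffices to bound the shortest such arc, and a shortest essential arc is simple).
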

\begin {proof} It suffices to prove the lemma when $\gamma $ is a simple closed curve.  For the first part,
double $P $ to obtain a closed hyperbolic surface of genus $2 $.  There is a closed geodesic $\bar \gamma $ on this surface homotopic to the double of $\gamma $; this has length at most twice that of $\gamma $. The Collar Lemma \cite[Lemma 12.6]{Farbprimer} states that the radius $\sinh^ {- 1}\left (  1 /{\sinh (\frac 12 \length \bar \gamma)}\right) $-neighborhood of $\bar \gamma $ is an annulus; as some boundary curve of $P $ intersects $\bar \gamma $, this radius is at most $l $.  This gives the first inequality.

If the endpoints of $\gamma $ lie on the same component of $\partial P $, they partition that component into two arcs $\alpha $ and $\beta $.  Without loss of generality, $\length (\alpha) \leq \frac {l+ 1}2$.  But the concatenation $\gamma \cdot \alpha $ is homotopic to one of the other two boundary components of $P $, so it must have length at least $l $.  The lemma follows.
\end{proof}

\begin {proof}[Proof of Proposition \ref {treeinjectivity}]
 Recall that to pick a $\mu $-random framed hyperbolic surface, we choose length and twist parameters for each edge of $X $, produce from these a hyperbolic metric $d $ on the surface $S $ and then choose a base frame randomly from a totally geodesic pair of pants $P_d $ on $S $ corresponding to some root of $X $.  Fix some large $l ,R >0 $ such that $\nu ([l,l + 1] )>0 $.  Then with positive probability the length parameters for every edge in an $R $-ball around the root in $X $ are within $[l, l + 1] $.  It follows that the injectivity radius at any point $p \in P_d $ is at least $$\star=\min\left\{\, \frac {l - 1} 2,\ R\sinh\left ( \frac 1 {\sinh (l) }\right)\,\right\}.$$  To see this, note that the injectivity radius is realized as the length of a geodesic segment which starts and terminates at $p $.  Either this geodesic enters and leaves the same boundary component of some pair of pants with boundary lengths in $[l,l + 1] $, in which case the first estimate applies, or it passes through at least $R $ such pants and the second applies.  As $\star $ can be made arbitrarily large, the proposition follows.
\end{proof}

\subsection {IRSs of $SO(3,1) \cong \PSL_2\BC$ supported on thick surface groups}
\label{thick}

Suppose that $\Sigma $ is a closed, orientable surface of genus $g $.  In this section we construct a large family of IRSs of $\PSL_2\BC $ that are supported on subgroups $\Gamma $ with $\BH ^ 3/\Gamma $ homeomorphic to $\Sigma \times \BR $.  These examples are similar in spirit to those constructed --- in any dimensions --- by gluings in the following section, but have the added feature that they are supported on finitely generated subgroups of $\PSL_2\BC $.  

\vspace{2mm}

The authors would like to thank Yair Minsky for an invaluable conversation that led to this example.

\subsection{} The construction makes use of the action of the mapping class group $\modular (\Sigma) $ on the Teichmuller space $\CT (\Sigma) $; we refer to \cite {Matsuzakihyperbolic}, \cite {HubbardTeichmuller}, \cite {Farbprimer} for the general theory.  We identify $\CT (\Sigma) $ as the space of equivalence classes of hyperbolic metrics on $\Sigma$, where two metrics $d_0, d_1 $ are equivalent if there is an isometry $(\Sigma, d_0)\to (\Sigma, d_1)$ homotopic to the identity map.  We will sometimes denote elements of Teichmuller space as $(\Sigma,d) $ and sometimes as $X $, depending on context. The group $\modular (\Sigma)$ acts properly discontinuously on $\CT (\Sigma) $ and the quotient is the moduli space of all hyperbolic metrics on $\Sigma $.  Teichmuller space admits a natural \it Teichmuller metric \rm (see \cite {HubbardTeichmuller}), with respect to which $\modular (\Sigma) $ acts by isometries. Thurston has shown \cite {Thurstongeometry} how to give the union of $\CT (\Sigma)$ with the space of  projective measured lamination space $\PML (\Sigma) $ a natural topology so that the resulting space is homeomorphic to a ball of dimension $6g-6$, with $\CT (\Sigma) $ as the interior and $\PML (\Sigma) $ as the boundary.  This topology is natural, in the sense that the action of $\modular (\Sigma) $ on $\CT (\Sigma) $ extends continuously to the natural action of $\modular (\Sigma) $ on $ \PML (\Sigma) $.

Our construction of IRSs of $\PSL_2\BC $  relies on the following definition.

\begin{defn}[Farb-Mosher]\label {Schottky}
A finitely generated, free subgroup $F\subset \modular (\Sigma) $ is \emph{Schottky} if any orbit of the action of $F $ on $\CT (\Sigma) $ is \it quasi-convex, \rm  i.e.\ after fixing $X \in \CT (\Sigma) $, there is some $C >0 $ such that any Teichmuller geodesic segment that joins two points from the orbit $F (X)$ lies in a $C $-neighborhood of $F (X) $.
\end{defn}

\noindent
{\it Remark.} Farb and Mosher \cite {Farb} have shown that if $\phi_1,\ldots,\phi_n $ are pseudo-Anosov elements of $\modular (\Sigma) $ with pairwise distinct attracting and repelling laminations, then for all choices of sufficiently large exponents the elements $  \phi_1 ^ {e_1},\ldots,\phi_n ^ {e_n}$ freely generate a purely pseudo-Anosov Schottky subgroup of $\modular (\Sigma) $.

\medskip

Suppose from now on that $\phi_1,\ldots,\phi_n \in \modular (\Sigma)$ freely generate a Schottky subgroup $F \subset \modular (\Sigma) $.  Choose a sequence of finite strings
$$
e^ 1 = (e_1^ 1,\ldots, e_{n_1} ^ 1),
\ \ \ \ e ^ 2  = (e_1 ^ 2,\ldots, e_{n_2} ^ 2), \ \ \ \ \ldots
$$
with entries in $\{0,\ldots, n\} $ and let $C $ be the sub-shift of $\{0, \ldots, n\} ^\BZ $ consisting of strings all of whose finite substrings are contained in $e ^ i $ for some $i $. Set
$$f_i : \Sigma \to \Sigma ,\ \  \ f_i = \phi_{e_{n_i} ^ i } \circ \cdots \circ \phi_{e_1 ^ i}. $$  A celebrated theorem of Thurston \cite {Thurstonhyperbolic2} then implies that each \it mapping torus \rm  $$M_{f_i} = \Sigma \times [0, 1] / (x, 0) \sim (f_i (x), 1) $$ admits a (unique) hyperbolic metric.  We let $\mu_i $ be the corresponding IRS of $\PSL_2\BC $.

\begin {thm}\label {three-dimensionalIRS}
Any weak limit of a subsequence of $(\mu_i) $ is an IRS $\mu $ of $\PSL_2\BC $ that is supported on subgroups $\Gamma < \PSL_2\BC $ with $\BH ^ 3/\Gamma $ homeomorphic to $\Sigma \times \BR $.  Moreover, if the shift space $C $ does not contain periodic sequences, no subgroup $\Gamma < \PSL_2\BC $ in the support of $\mu $ is contained in a lattice of $\PSL_2\BC $.
\end {thm}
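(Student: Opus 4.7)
The argument combines compactness of the Chabauty space of $\sub_G$ (for existence of the limit) with Minsky's bounded-geometry theorem for thick hyperbolic $3$-manifolds \cite{Minskybounded}, which controls both the geometry of $M_{f_i}$ and of Chabauty limits around a uniformly random basepoint. The Schottky hypothesis enters through the observation that the Teichm\"uller axis of each $f_i$ lies in a bounded neighborhood of the orbit $F\cdot X$, and thus projects to a compact set in moduli space; by Minsky's theorem this forces the mapping tori to be uniformly thick, i.e.\ there is some $\epsilon_0>0$, independent of $i$, with $\inj(M_{f_i})\geq\epsilon_0$ everywhere. Simultaneously, since $F$ acts quasi-isometrically on $\CT(\Sigma)$, the Teichm\"uller translation length of $f_i$ is comparable to $|w_i|$, which we may assume tends to infinity on the relevant subsequence; hence $\vol(M_{f_i})\to\infty$. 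Weak compactness of probability measures on $\sub_G$ yields a subsequential limit $\mu$, inheriting $G$-invariance.

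\textbf{Identifying the support.} To show $\BH^3/\Gamma\cong\Sigma\times\BR$ for $\mu$-a.e.\ $\Gamma$, I would pass to the infinite cyclic cover $\widetilde{M}_{f_i}\cong\Sigma\times\BR$, whose deck transformation $\tau_i$ has translation length $\ell(\tau_i)\asymp|w_i|\to\infty$ by uniform thickness. Thus $d(\tilde{x},\tau_i^k\tilde{x})\geq|k|\ell(\tau_i)$ for every lift $\tilde{x}$ and every $k\neq 0$. Writing $\Gamma_i<\PSL_2\BC$ for a $\mu_i$-random conjugate of $\pi_1(M_{f_i})$, this says any element of $\Gamma_i$ at bounded Chabauty distance from the identity must lie in the (conjugate of the) fiber subgroup $\pi_1(\Sigma)\lhd\pi_1(M_{f_i})$. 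Passing to the limit, $\mu$-a.e.\ $\Gamma$ is then a Chabauty limit of discrete faithful representations of $\pi_1(\Sigma)$ into $\PSL_2\BC$, with faithfulness guaranteed by the uniform lower bound on translation lengths. The tameness theorem together with Scott's compact core theorem then yield $\BH^3/\Gamma\cong\Sigma\times\BR$.

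\textbf{Moreover, and the main obstacle.} Suppose some $\Gamma$ in $\mathrm{supp}(\mu)$ were contained in a lattice $\Lambda$. Passing to the normal core of $\Gamma$ in $\Lambda$, a finite cover of $\BH^3/\Lambda$ fibers over $S^1$ with fiber $\Sigma$ and pseudo-Anosov monodromy $\psi\in\modular(\Sigma)$, and $\Gamma$ is the corresponding fiber subgroup; consequently $\BH^3/\Gamma$ admits an isometric $\psi$-action, making its bilipschitz geometry invariant under a nontrivial translation. On the other hand, using Minsky's uniform model one assigns to a $\mu$-random $\Gamma$ a bi-infinite symbolic sequence $\alpha(\Gamma)\in C$ recording which building block $\phi_k$ is applied in each block around the basepoint, and the Schottky condition (via the quasi-isometric embedding $F\hookrightarrow\CT(\Sigma)$) ensures that the bilipschitz geometry of $\BH^3/\Gamma$ determines $\alpha(\Gamma)$ up to shift. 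Periodicity of the geometry then forces $\alpha(\Gamma)$ to be periodic in $C$, contradicting the hypothesis. The main obstacle is precisely this last step---constructing $\alpha(\Gamma)$ canonically from the bilipschitz geometry and showing that $\psi$-periodicity of that geometry implies periodicity of $\alpha(\Gamma)$; this is where the Schottky condition is essential, as it gives the quasi-isometric embedding of $F$ onto a quasi-convex orbit in $\CT(\Sigma)$ needed to faithfully encode the bilipschitz combinatorics of the cyclic cover.
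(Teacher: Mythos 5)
Your outline of the first part (uniform thickness from the Schottky condition, circumference/translation length of the deck transformation tending to infinity so that Chabauty limits land in the fiber subgroups, and identification of the limits as thick doubly degenerate surface groups) is essentially the paper's argument, modulo standard technology for upgrading algebraic to geometric convergence. The problem is the ``moreover'' part: you correctly identify the crux --- producing a symbolic sequence $\alpha(\Gamma)$ from the geometry of $\BH^3/\Gamma$ and showing that periodicity of the geometry forces periodicity of the sequence --- but you then declare it ``the main obstacle'' without giving an argument. That step is the actual content of the theorem, so as written the proposal has a genuine gap.

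The paper fills this gap with a precise dictionary that your sketch only gestures at. It introduces the space $\BG(F)$ of word geodesics in the Schottky group $F$ and, via Minsky's model theorem, assigns to each $\gamma\in\BG(F)$ the unique thick doubly degenerate manifold $N_\gamma\in\DD(\Sigma)$ modeled on the Teichm\"uller geodesic shadowing $O\circ\gamma(\BZ)$; it proves this map descends to an embedding of $\BG(F)/F$ into $\DD(\Sigma)$, that convergence in $\BG(F)/F$ matches based Gromov--Hausdorff convergence (with coarse basepoints $P_\gamma$ covering all of $N_\gamma$ under shifts), and hence that every Gromov--Hausdorff limit of $(M_i,p_i)$ is isometric to $N_\gamma$ for some $[\gamma]$ that is a limit of shifts of the $[\gamma_i]$ --- which, by the definition of the subshift, forces the symbolic sequence of $\gamma$ to lie in $C$. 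The periodicity implication is then Proposition \ref{onlycover}: if $N_\gamma$ covers a finite-volume orbifold, Thurston's Covering Theorem gives an isometry realizing a pseudo-Anosov $f$ with $i\circ\mu=\mu\circ f$, so $f$ preserves the model geodesic; comparing the orbits $f^i(X)$ with $O\circ\gamma(\BZ)$ and invoking proper discontinuity of $\modular(\Sigma)\actson\CT(\Sigma)$ shows some power of $f$ lies in $F$ and translates along $\gamma(\BZ)$, i.e.\ $[\gamma]$ is shift-periodic. Note also that the correct contradiction hypothesis is that $C$ contains no periodic \emph{sequences}, not that the geometry of a single $\BH^3/\Gamma$ is aperiodic; your sketch conflates these, and without the identification of $\alpha(\Gamma)$ as an element of $C$ (via the limit-of-shifts argument) the hypothesis on $C$ never enters. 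To complete your proof you would need to supply substitutes for Lemma \ref{FM}, Proposition \ref{existence}, Propositions \ref{BGembeds}--\ref{GromovHausdorfflimits} and \ref{onlycover}, which together are exactly the missing encoding-and-rigidity step.
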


Before beginning the proof in earnest, we give a motivational outline.  The idea is to associate to use the Schottky group $\left < \phi_1,\ldots,\phi_n \right >$ to associate to every element $\gamma \in \{0, \ldots, n\} ^\BZ $ of the shift space a pair consisting of the following elements:
\begin {enumerate}
\item a hyperbolic $3$-manifold $N_\gamma $ homeomorphic to $\Sigma \times \BR $
\item a `coarse base point' $P_\gamma $, i.e. a subset of $N_\gamma $ with universally bounded diameter.
\end {enumerate}   Shifting a string $\gamma $ corresponds to shifting the base point of $N_\gamma $ and convergence of $\gamma_i \in\{0, \ldots, n\} ^\BZ $ corresponds to based Gromov Hausdorff convergence of the associated pairs $(N_{\gamma_i}, P_{\gamma_i}) $.  A periodic string with period $(e_1,\ldots, e_{k})$ corresponds to the infinite cyclic cover of the mapping torus $M_{\phi_{e_1}, \ldots,\phi_{e_k}}$, where the placement of the base point depends on the particular shift of the periodic string; moreover, no aperiodic string produces a hyperbolic $3$-manifold that covers a finite volume manifold.  If the IRSs $\mu_i $ limit to $\mu $ as in the statement of the theorem, then the support of $\mu $ consists of subgroups $\Gamma < \PSL_2\BC$ with $\BH ^ 3 / \Gamma $ a based Gromov Hausdorff limit of the mapping tori $M_{f_i} $ (in fact, of their infinite cyclic covers) under some choice of base points.    Using the correspondence above, we see that such $\BH ^ 3 / \Gamma $ arise from elements of $\{0,\ldots, n\} ^\BZ $ that are limits of periodic sequences used in producing the mapping tori $M_{f_i} $.  Varying the base points chosen on $M_{f_i} $ gives Gromov Hausdorff limits corresponding exactly to the elements of the sub-shift $C \subset \{0,\ldots, n\} ^\BZ $.  Therefore, as long as $C $ does not contain periodic sequences, no such Gromov Hausdorff limit can cover a finite volume hyperbolic $3$-manifold.

\vspace {2mm}

The correspondence between elements of $\{0,\ldots, n\} ^\BZ $ and coarsely based hyperbolic $3$-manifolds occupies most of the exposition to follow.  It will be convenient to embed the shift space inside of an auxiliary space, consisting of geodesics in the Schottky group $F=\left <\phi_1,\ldots,\phi_n\right> $.  Namely, we consider $F $ with its \it word metric: \rm
$$\dist (g, h) = \min \{ k \ | \ h ^ {- 1}g =\phi_{i_1} \ldots \phi_{i_k} \}.$$
and define the \it space of geodesics \rm in $F $ as the set
$$\BG(F) := \left \{ \gamma : \BZ \longrightarrow F \ | \ \gamma \text { word-isometric embedding} \right \}, $$
which we consider with the compact-open topology.  The space of geodesics $\BG (F) $ has a natural \it shift operator \rm defined by the formula $$S : \BG (F) \longrightarrow \BG (F), \ \ S (\gamma) \, (i) = \gamma (i - 1). $$ The group $F $ acts on $\BG (F)$ via $(g \cdot \gamma) (x) = g \gamma (x)$ and the quotient $\BG (F) / F$ can be identified with the space of geodesics $\gamma : \BZ \to F $ with $\gamma (0) = 1 $.   Note that the shift operator $S $ descends to another `shift operator', also called $S $, on $\BG (F)/F $.  Finally, there is then a natural shift invariant embedding $$ \{0, \ldots, n\} ^\BZ \longrightarrow \BG (F)/F, \ \ \ \ \  e \mapsto [\gamma_e]$$ determined by the constraint $\gamma_e (i) ^ {- 1} \gamma_e(i+1) = \phi_{e_i}$.

It now remains to relate elements of $\BG (F) / F $ to hyperbolic $3$-manifolds.  The key to this is Minsky's theorem \cite{Minsky} that $\epsilon $-thick doubly degenerate hyperbolic $3$-manifolds homeomorphic to $\Sigma \times \BR $ are modeled on geodesics in the Teichmuller space $\CT (\Sigma) $.  The plan for the rest of the section is as follows.  In section \ref {geodesics}, we show how to go from elements of $\BG (F) / F $ to Teichmuller geodesics.   Section \ref {manifolds} describes Minsky's theorem above, and the following section completes the relationship between elements of $\BG (F) / F $ and hyperbolic $3$-manifolds.  Section \ref {topology} shows that convergence in $\BG (F)/F $ translates to based Gromov Hausdorff convergence of hyperbolic $3$-manifolds.  We then indicate how shift-periodic elements of $\BG (F)/F $ correspond to infinite cyclic covers of mapping tori, and end with a section devoted to the proof of Theorem \ref {three-dimensionalIRS}.

\subsection{Geodesics in $F $ and thick Teichmuller geodesics}\label {geodesics}

Let $O: F \longrightarrow \CT (\Sigma) $ be a fixed $F $-equivariant map. It follows from \cite[Theorem 1.1]{Farb} that the map $O $ is a quasi-isometric embedding that extends continuously to an $F $-equivariant embedding $O:\partial_\infty F \longrightarrow \PML (\Sigma) $. The following lemma is implicit in \cite {Farb}.

\begin{lem}  \label{FM}
For sufficiently large $C >0 $, if $\gamma: \Z \to F $ is any geodesic then there is a Teichmuller geodesic $\alpha_\gamma : \BR \to \CT (\Sigma) $ such that the Hausdorff distance between $O \circ\gamma(\Z)$ and $\alpha_\gamma(\BR) $ is at most $C $.  If we require $$\ \ \ \lim_{t \to \infty} \alpha_\gamma (t) = \lim_{i \to \infty} O\circ\gamma (i), \ \ \ \ \lim_{t \to -\infty} \alpha_\gamma(t) = \lim_{i \to -\infty} O \circ\gamma (i), $$
then $\alpha_\gamma $ is unique up to orientation preserving reparameterization.
\end{lem}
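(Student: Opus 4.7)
My strategy is to use the Farb--Mosher theory of Schottky subgroups together with the quasi-convexity hypothesis on the orbit. By \cite[Theorem 1.1]{Farb} recalled above, $O\colon F \to \CT(\Sigma)$ is a quasi-isometric embedding that extends to an $F$-equivariant embedding $\partial_\infty F \hookrightarrow \PML(\Sigma)$. In particular $O\circ\gamma$ is a uniform quasi-geodesic in the Teichmuller metric, with constants depending only on $F$, and the two endpoints $\xi_\pm$ of $\gamma$ in $\partial_\infty F$ correspond to well-defined limits $\lambda_\pm = O(\xi_\pm) \in \PML(\Sigma)$. A further fact from \cite{Farb} that will be essential is that this boundary embedding actually lands in the subspace of filling, uniquely ergodic laminations, so that Masur's criterion is available for $\lambda_\pm$.

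To produce $\alpha_\gamma$ I would approximate by finite segments. For each $i<0<j$ let $\sigma_{i,j}$ be the Teichmuller geodesic from $O\circ\gamma(i)$ to $O\circ\gamma(j)$, parametrised so that $\sigma_{i,j}(0)$ is a closest point to $O\circ\gamma(0)$. By the Schottky (quasi-convexity) hypothesis each $\sigma_{i,j}$ lies in the $C_0$-neighborhood of the orbit $O(F)$. Letting $i\to -\infty$ and $j\to +\infty$, the endpoints converge to $\lambda_\mp$ in the Thurston compactification; since both are uniquely ergodic, a compactness argument via Masur's theorem gives locally uniform subsequential convergence of the $\sigma_{i,j}$ to a biinfinite Teichmuller geodesic $\alpha_\gamma\colon\BR\to\CT(\Sigma)$ with endpoints $\lambda_\pm$. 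Unique ergodicity of $\lambda_\pm$ forces any two such limiting geodesics to differ only by an orientation-preserving reparameterization, yielding the uniqueness clause of the lemma.

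For the Hausdorff bound, one inclusion is nearly automatic: passing the containment $\sigma_{i,j} \subset N_{C_0}(O(F))$ to the limit yields $\alpha_\gamma(\BR) \subset N_{C_0}(O(F))$, and the quasi-isometric embedding property of $O$ then promotes this to $\alpha_\gamma(\BR) \subset N_C(O\circ\gamma(\Z))$, since the $F$-elements that are $C_0$-close to successive points of $\alpha_\gamma$ must themselves be at bounded word distance. The reverse inclusion is the main technical point and requires a Morse-type fellow-traveller statement for thick Teichmuller geodesics: since $\alpha_\gamma$ and the quasi-geodesic $O\circ\gamma(\Z)$ share endpoints $\lambda_\pm \in \PML(\Sigma)$ and both stay in an $\varepsilon$-thick part of $\CT(\Sigma)$ (again by the Farb--Mosher analysis), they uniformly fellow-travel, giving $O\circ\gamma(\Z) \subset N_C(\alpha_\gamma(\BR))$.

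The main obstacle is this last fellow-traveller step: Teichmuller space is not globally Gromov hyperbolic, so quasi-geodesics in $\CT(\Sigma)$ do not in general track Teichmuller geodesics with matching ideal endpoints. The saving feature is that Schottky orbits are confined to the thick part, where Teichmuller geodesics connecting uniquely ergodic endpoints behave like geodesics in a hyperbolic space; this is exactly the phenomenon that Farb--Mosher exploit in naming such subgroups Schottky, and the bulk of the work consists in unpacking their argument to produce a uniform constant $C$ independent of the geodesic $\gamma$.
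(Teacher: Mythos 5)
Your proof is correct and follows essentially the same route as the paper: approximate by finite Teichmuller segments (which lie in the thick part by quasi-convexity of the Schottky orbit), extract a limit geodesic by a compactness argument, and obtain the two Hausdorff inclusions from stability of thick Teichmuller geodesics on one side and the quasi-isometric embedding property of $O$ on the other, with uniqueness coming from the uniquely ergodic endpoint laminations. The fellow-traveller statement you flag as the main remaining obstacle is precisely Theorem 4.2 of Minsky's \emph{Quasi-projections in Teichmuller space} (stability of geodesics lying in $\CT_\epsilon(\Sigma)$), which is the single external input the paper cites, so no further unpacking of the Farb--Mosher argument is required.
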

\begin {proof}
By quasi-convexity of $O (F) \subset \CT (\Sigma)$, each of the geodesic segments $$\alpha_i =[O \circ \gamma (- i),O \circ\gamma (i)]\subset \CT (\Sigma) $$ is contained in the $\epsilon $-thick part
$$\CT_\epsilon (\Sigma) = \{ (\Sigma, d) \in \CT (\Sigma) \ | \ inj (\Sigma, d) \geq \epsilon \} $$
of $\CT (\Sigma) $ for some universal $\epsilon =\epsilon (F, O) $.  Therefore, Theorem 4.2 from \cite {Minskyquasi-projections} implies that there exists some $C= C (F, O) >0 $ such that $\{O \circ\gamma (-i)  ,\ldots,O \circ\gamma (i) \}$ is contained in a $C$-neighborhood of $\alpha_i $ for all $i $.  This means that all $\alpha_i $ pass within a bounded distance of $O \circ\gamma (0)  ,$ so Arzela-Ascoli's theorem guarantees that after passing to a subsequence $(\alpha_i) $ converges to a geodesic $\alpha_\gamma: \BR  \to \CT (\Sigma) $.  Then $O \circ\gamma (\Z)  $ lies in a $C $-neighborhood of $\alpha_\gamma(\BR), $ and thus also $\alpha_\gamma (\BR)$ lies in a $C' $-neighborhood of $O \circ\gamma (\Z)  $ for some $C' $ depending on $C$ and the distortion constants of the quasi-isometric embedding $O $.  The uniqueness is \cite [Lemma 2.4]{Farb}.
\end{proof}

Note in particular that $\alpha_\gamma$ has image contained in the $\epsilon $-thick part
$\CT_\epsilon (\Sigma)$ for some universal $\epsilon =\epsilon (F, O)$.

\subsection{} {\it Remark.}
Here one word of caution about Thurston's boundary of Teichmuller space is in order: if $\gamma : [0,\infty) \to \CT (\Sigma) $ is a geodesic ray, then it is not always true that $\gamma $ converges to a single point in the boundary $\PML (\Sigma) $ \cite {LenzhenTeichmuller}.  It is however true if $\gamma$ is contained in $\CT_\epsilon (\Sigma)$, the $\epsilon $-thick part of Teichmuller space, as
follows from three theorems of Masur \cite[Theorem 1.1]{MasurHausdorff}, \cite {Masurtwo}, \cite {Masuruniquely}. We state these as a lemma:

\begin {lem}[Masur]\label {convergence}
Fix $\epsilon >0 $ and let $\gamma: [0,\infty) \to \CT_{\epsilon} (\Sigma) $ be a geodesic ray. Then $\gamma $ converges to a point $\lambda \in\PML (\Sigma) $, and the lamination $\lambda $ is filling and uniquely ergodic.
Furthermore, if two geodesic rays $\gamma,\gamma': [0,\infty) \to \CT_\epsilon (\Sigma) $ converge to the same point of $\PML (\Sigma) $, then they are asymptotic.
\end{lem}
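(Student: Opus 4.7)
The plan is to combine the three cited theorems of Masur, all of which apply once we associate to the Teichmuller ray $\gamma$ its \emph{vertical measured foliation} $\mathcal{F}_\gamma$ on $\Sigma$: $\gamma$ is, by definition, the horizontal Teichmuller trajectory of a unit-area quadratic differential, and $[\mathcal{F}_\gamma]\in\PML(\Sigma)$ is the natural candidate for the limit point $\lambda$.

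First I would invoke \emph{Masur's criterion}, \cite[Theorem 1.1]{MasurHausdorff}: if $\gamma$ is recurrent to some compact set in moduli space $\modular(\Sigma)\backslash\CT(\Sigma)$, then $\mathcal{F}_\gamma$ is uniquely ergodic. Because $\gamma$ is contained in the $\epsilon$-thick part $\CT_\epsilon(\Sigma)$, Mumford's compactness theorem implies that its projection to moduli space is precompact, so the hypothesis of Masur's criterion holds and $\mathcal{F}_\gamma$ is uniquely ergodic. The filling property then follows from \cite{Masuruniquely}: a non-filling $\mathcal{F}_\gamma$ would have a simple closed curve disjoint from its support whose extremal (hence hyperbolic) length along $\gamma$ decays to zero, contradicting thickness; alternatively, Masur shows directly that uniquely ergodic limits of thick rays are minimal and filling.

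Next, for the convergence claim, I would apply \cite{Masurtwo}, which compares the visual (Teichmuller) boundary to the Thurston boundary and shows that on the $\epsilon$-thick part these two compactifications agree. Concretely, when $\mathcal{F}_\gamma$ is uniquely ergodic and $\gamma \subset \CT_\epsilon(\Sigma)$, one has $\gamma(t)\to [\mathcal{F}_\gamma]$ in the Thurston compactification as $t\to\infty$. Setting $\lambda := [\mathcal{F}_\gamma]$ gives the first two bullets of the lemma.

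For the asymptotic claim, suppose two thick rays $\gamma,\gamma': [0,\infty)\to\CT_\epsilon(\Sigma)$ both converge to the same $\lambda\in\PML(\Sigma)$. By the previous paragraph applied to each, the projective classes of $\mathcal{F}_\gamma$ and $\mathcal{F}_{\gamma'}$ both equal $\lambda$, so by unique ergodicity the vertical foliations agree up to a positive scalar. Teichmuller rays with the same (projectively) vertical foliation lie in a common strong-stable leaf of the Teichmuller flow, and on the thick part this leaf is uniformly exponentially contracted (again by the thick-part estimates behind \cite{Masurtwo}); after matching parameterizations $\gamma$ and $\gamma'$ are therefore asymptotic in the Teichmuller metric. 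The main obstacle in writing this out rigorously is purely bookkeeping: translating fluently between measured foliations and measured laminations, and lining up the three slightly different formulations Masur uses for ``thick'' (recurrence to compact sets, $\inj\geq\epsilon$, and bounded geometry of the associated quadratic differential).
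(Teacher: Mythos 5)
Your assembly is correct and is essentially the paper's own approach: the lemma is stated there without proof, precisely as the combination of the three cited theorems of Masur that you spell out (the recurrence criterion giving unique ergodicity and arationality for thick rays, convergence of uniquely ergodic rays in the Thurston compactification, and asymptoticity of rays with topologically equivalent uniquely ergodic vertical foliations). The only correction is a citation swap: the asymptoticity statement in your final paragraph is the main theorem of \cite{Masuruniquely}, rather than a consequence of the estimates in \cite{Masurtwo}.
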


\subsection{Doubly degenerate hyperbolic structures on $\Sigma \times \BR $} \label {manifolds}  In this section we review some well-known facts about the geometry of hyperbolic structures on $\Sigma \times \BR $ and present Minsky's theorem that thick doubly degenerate hyperbolic structures are modeled on Teichmuller geodesics.  To begin, consider the deformation space $$AH (\Sigma) = \{ \rho : \pi_1 \Sigma \to \PSL_2\BC \ | \ \rho \text { discrete, faithful } \} / \PSL_2\BC, $$ where the quotient is with respect to the conjugation action of $\PSL_2\BC $.  We consider $AH (\Sigma) $ with the topology induced from the compact-open topology on the representation variety $\Hom (\pi_1\Sigma,\PSL_2\BC) $.  The space $AH (\Sigma) $ has a geometric interpretation as the set of `marked isometry types' of hyperbolic structures on $\Sigma \times \BR $:
$$AH(\Sigma) \cong \left \{ (N,\mu) \ \big | \ \substack {N \text { is a hyperbolic } 3-\text {manifold } \\ \text {homeomorphic to } \Sigma \times \BR}, \  \substack {\mu: \Sigma \longrightarrow N \text { is a} \\ \text {homotopy equivalence}}\right\}/ \sim,$$
where $(N_1,\mu_1) \sim (N_2,\mu_2) $ when there is an isometry $\phi: N_1 \to N_2 $ with $\phi \circ \mu_1 $ homotopic to $\mu_2 $.
Here, the correspondence assigns to a representation $\rho $ the quotient manifold $N = \BH ^ 3 / \rho (\pi_1\Sigma) $ and a marking $\mu : \Sigma \to N $ such that composing $\mu_* : \pi_1 \Sigma \to \pi_1 N $ with the holonomy map gives $\rho $ up to conjugacy.  We will use these two descriptions of $ AH (\Sigma) $ interchangeably.

Suppose now that $N $ is a complete hyperbolic $3$-manifold with no cusps that is homeomorphic to $\Sigma \times \BR $.  The two ends of $N $ admit a geometric classification: very loosely, an end of $N $ is \it convex cocompact \rm if level surfaces increase in area as they exit the end, while an end is \it degenerate \rm if the areas of level surfaces stay bounded.  We refer the reader to \cite {Matsuzakihyperbolic} for actual definitions and further exposition.

One says $N $ is \it doubly degenerate \rm if both its ends are degenerate.  We write
$$ \DD (\Sigma) = \{ \, (N,\mu) \in AH (\Sigma) \ | \ N \text { doubly degenerate} \} $$
for the space of all doubly degenerate elements of the deformation space $AH(\Sigma) $.

If $(N,\mu) $ is doubly degenerate, each of its ends has an \it ending lamination, \rm a geodesic lamination on $\Sigma $ that captures the geometric degeneration of level surfaces exiting that end (see \cite {Matsuzakihyperbolic}).  Ending laminations are always filling and they always admit a transverse measure of full support \cite [Lemmas 2.3 and 2.4]{Minskyrigidity}.  It is therefore convenient to defined the space $\EL (\Sigma) $ as the set of supports of filling measured laminations on $\Sigma $; in particular, the weak-* topology on (filling) measured laminations descends to a natural topology on $\EL (\Sigma) $.  We then have a function
$$\CE : \DD (\Sigma) \longrightarrow \EL (\Sigma) \times \EL (\Sigma)$$
that takes a doubly degenerate hyperbolic $3$-manifold to its two ending laminations.

The following theorem is well-known: the injectivity is Thurston's Ending Lamination Conjecture, recently resolved by Brock-Canary-Minsky \cite {Brockclassification2}, and the topological content follows from standard arguments and Brock's proof of the continuity of Thurston's length function \cite {Brockcontinuity}.  However, a nicely written proof of the full statement was recorded by Leininger and Schleimer in \cite {Leiningerconnectivity}.

\begin {thm}[Theorem 6.5, \cite {Leiningerconnectivity}] The map above gives a homeomorphism
$$ \CE : \DD (\Sigma) \longrightarrow \EL(\Sigma )\times\EL (\Sigma) \, - \, \Delta $$
onto the space of distinct pairs of elements of $\EL (\Sigma) $. \label {ELC}
\end {thm}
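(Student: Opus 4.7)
The plan is to verify the four properties that together make $\CE$ a homeomorphism onto its image: $\CE$ is well-defined (ending laminations are always distinct for doubly degenerate $N$), injective, surjective, continuous, and has continuous inverse. Each piece relies on a specific deep result from the deformation theory of Kleinian surface groups.

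First I would check that $\CE$ lands in the complement of the diagonal. If $(N,\mu)\in\DD(\Sigma)$ had equal ending laminations on its two ends, one could realize $N$ as an algebraic limit of a sequence of quasi-Fuchsian manifolds $Q(X_n,Y_n)$ with $X_n,Y_n$ both converging in $\PML(\Sigma)$ to the same projective class; Thurston's double limit theorem fails to give convergence in this case, contradicting existence of $N$. Injectivity of $\CE$ on $\DD(\Sigma)$ is then precisely the Ending Lamination Theorem of Brock--Canary--Minsky \cite{Brockclassification2}, applied in the special case of surface groups without parabolics.

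For surjectivity, given $(\lambda_+,\lambda_-)\in\EL(\Sigma)\times\EL(\Sigma)\setminus\Delta$, choose sequences $X_n,Y_n\in\CT(\Sigma)$ with $X_n\to\lambda_+$ and $Y_n\to\lambda_-$ in Thurston's compactification. Thurston's double limit theorem, together with the fact that $\lambda_\pm$ are filling and distinct, gives an algebraic limit $(N_\infty,\mu_\infty)\in AH(\Sigma)$ after passing to a subsequence and conjugating. Standard continuity-of-length arguments force each end of $N_\infty$ to be degenerate with the corresponding lamination as its ending lamination; in particular $(N_\infty,\mu_\infty)\in\DD(\Sigma)$ and $\CE(N_\infty,\mu_\infty)=(\lambda_+,\lambda_-)$.

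Continuity of $\CE$ follows from Brock's continuity of the Thurston length function on $AH(\Sigma)$ \cite{Brockcontinuity}: if $(N_n,\mu_n)\to (N,\mu)$ in $\DD(\Sigma)$, then realised curves whose lengths stay bounded must accumulate on the ending lamination, giving convergence of the endpoints in $\EL(\Sigma)$.

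The main obstacle is continuity of $\CE^{-1}$, and this is where the bulk of the Leininger--Schleimer write-up is spent. The approach is to argue properness directly: suppose $(\lambda_+^n,\lambda_-^n)\to (\lambda_+,\lambda_-)$ with all pairs off the diagonal, and let $(N_n,\mu_n)=\CE^{-1}(\lambda_+^n,\lambda_-^n)$. One needs to show that $(N_n,\mu_n)$ has a subsequential limit in $AH(\Sigma)$; Thurston's double limit theorem again provides compactness in $AH(\Sigma)$, since the laminations remain uniformly distinct in $\PML(\Sigma)$. Any subsequential limit $(N_\infty,\mu_\infty)$ is doubly degenerate with ending laminations $(\lambda_+,\lambda_-)$ by the continuity argument above, hence equals $\CE^{-1}(\lambda_+,\lambda_-)$ by injectivity. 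The hardest point is ruling out pinching or partial degeneration in the algebraic limit, which requires the filling and distinctness hypotheses on $\lambda_\pm$ together with continuity of length.
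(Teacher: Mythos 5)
The paper does not actually prove this statement: it is quoted as Theorem 6.5 of Leininger--Schleimer, with the remark that injectivity is the Ending Lamination Theorem of Brock--Canary--Minsky and that the ``topological content follows from standard arguments and Brock's proof of the continuity of Thurston's length function.'' Your outline decomposes the claim in exactly that way (injectivity via the ELT, surjectivity via the double limit theorem plus continuity of length, continuity of $\CE$ via Brock, continuity of $\CE^{-1}$ via a properness/compactness argument), so at the level of strategy you are reproducing the proof the paper is pointing to rather than inventing a new one.

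One step, however, is a genuine non sequitur as written: your argument that $\CE$ avoids the diagonal. You say that if both ends of $(N,\mu)$ had the same ending lamination, then the double limit theorem would ``fail to give convergence'' of an approximating quasi-Fuchsian sequence, ``contradicting existence of $N$.'' The failure of a sufficient condition for convergence does not contradict the existence of $N$ --- $N$ exists independently of whether any particular compactness theorem applies to some sequence you chose. The standard route to distinctness of the two ending laminations is different: one uses that the ending lamination of a degenerate end is precisely the unrealizable locus seen from that end (Thurston, Bonahon), together with the fact that the two ending laminations of a doubly degenerate surface group must bind the surface; a single filling lamination $\lambda$ cannot bind against itself, since $i(\lambda,\lambda)=0$, so the two laminations cannot coincide. (Alternatively one can invoke the covering theorem or the model manifold.) With that step repaired, the rest of your outline is the standard argument; the delicate point, as you correctly identify, is the properness needed for continuity of $\CE^{-1}$, which is where Leininger--Schleimer spend their effort and where the filling and unique-ergodicity properties of elements of $\EL(\Sigma)$ are really used.
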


A fundamental result of Minsky \cite {Minskyrigidity} states that doubly degenerate hyperbolic $3$-manifolds with injectivity radius bounded away from zero are `modeled' on universal curves over bi-infinite geodesics in Teichmuller space.  This was an early part of Minsky's program to prove Thurston's Ending Lamination Conjecture, which as noted above was  established by Brock-Canary-Minsky in \cite {Brockclassification2}.  The following theorem is a major step in this work.

\begin{thm}[Minsky \cite {Minsky}] \label {models}
If $(N,\mu) \in \DD(\Sigma) $ is a doubly degenerate hyperbolic $3$-manifold and $inj (N) \geq \epsilon >0 $, there is some $C = C (\epsilon,\Sigma) $ and a bi-infinite Teichmuller geodesic $\alpha : \BR \longrightarrow \CT (\Sigma) $ with the following properties.
\begin {enumerate}
\item If $(\Sigma,d)$ is a point on the geodesic $\alpha(\BR) \subset \CT (\Sigma) $, there is a point $(\Sigma, d') \in \CT (\Sigma) $ at distance at most $C $ from $(\Sigma, d) $ and a pleated surface
$f_d: (\Sigma, d') \longrightarrow N $ in the homotopy class of $\mu $.
\item If $f: (\Sigma, d') \longrightarrow N $ is a pleated surface in the homotopy class of $\mu $, then the distance in $\CT (\Sigma) $ between $(\Sigma, d') $ and the image $\alpha(\BR) $ is at most $C $.

\end {enumerate}
Moreover, $\alpha $ limits to exactly two points on $\PML (\Sigma) = \partial \CT (\Sigma) $, the unique projective measured laminations supported on the two ending laminations $\CE (N,\mu)$.  If $$\CE (N,\mu) =\left (\lim_{t \to \infty} \alpha (t),\lim_{t\to-\infty} \alpha (t)\right), $$ then $\alpha $ is unique up to orientation preserving reparameterization.  In this case, we say that $(N,\mu) $ is \rm modeled on \it the geodesic $\alpha \subset \CT (\Sigma) $.
\end{thm}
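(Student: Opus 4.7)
The plan is to use pleated surfaces as the bridge between $N$ and $\CT(\Sigma)$. Let $\mathcal{P} \subset \CT(\Sigma)$ denote the set of induced path metrics $(\Sigma, d')$ arising from pleated surfaces $f:(\Sigma, d') \to N$ in the homotopy class of $\mu$; I would show that $\mathcal{P}$ sits within bounded Hausdorff distance of a unique bi-infinite Teichm\"uller geodesic, from which (1), (2), and the identification of the endpoints all follow at once. The first observation is that $\mathcal{P} \subset \CT_{\epsilon}(\Sigma)$: a pleated surface is $1$-Lipschitz, so a short closed $d'$-geodesic on $\Sigma$ would project to an equally short closed geodesic in $N$, contradicting $\inj(N) \geq \epsilon$. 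The second is that, since each end of $N$ is degenerate, the standard Thurston--Canary construction produces a sequence of pleated surfaces exiting that end whose realized simple closed curves converge projectively to the ending lamination; the corresponding points of $\mathcal{P}$ therefore accumulate in Thurston's compactification exactly on the two points of $\CE(N, \mu)$.

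The heart of the argument is to show that $\mathcal{P}$ is a uniform quasi-geodesic in $\CT_{\epsilon}(\Sigma)$. For this I would invoke Thurston's interpolation lemma: given two pleated surfaces whose realized curves are disjoint, one can continuously interpolate between them through pleated surfaces whose induced metrics move only a uniformly bounded distance in $\CT(\Sigma)$. Concatenating such interpolations along a bi-infinite family of pleated surfaces sweeping through $N$ from one end to the other yields a sequence $(\Sigma, d_n') \in \mathcal{P}$, $n \in \BZ$, with uniformly bounded Teichm\"uller jumps and whose tails converge projectively to the two ending laminations. Because this sequence stays in the $\epsilon$-thick part, the thick-part quasi-projection estimates of Minsky already invoked in Lemma \ref{FM} imply that each Teichm\"uller segment $[(\Sigma, d_n'),(\Sigma, d_{n+1}')]$ fellow-travels $\mathcal{P}$ with a universal constant depending only on $\epsilon$ and $\Sigma$.

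An Arzel\`a--Ascoli argument then extracts a bi-infinite Teichm\"uller geodesic $\alpha: \BR \to \CT_{\epsilon}(\Sigma)$ within bounded Hausdorff distance of $\mathcal{P}$, giving (1) and (2) simultaneously. By Masur's convergence and asymptotic-rigidity statements recorded in Lemma \ref{convergence}, $\alpha$ converges at $\pm\infty$ to well-defined points of $\PML(\Sigma)$; these must be supported on the two ending laminations because $\mathcal{P}$ accumulates only on $\CE(N,\mu)$, and any other bi-infinite thick Teichm\"uller geodesic with the same endpoints is asymptotic to $\alpha$ at both ends, hence differs from it only by an orientation-preserving reparameterization. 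The main obstacle in this plan is the interpolation/efficiency estimate in the second paragraph: one needs a uniform bound on how far the induced metric of a pleated surface can move in $\CT(\Sigma)$ when its realized curve is replaced by a disjoint one, uniform over all curves realizable in a thick hyperbolic $3$-manifold. This is the technical core of \cite{Minsky} and relies on diameter bounds for pleated surfaces in the thick part together with curve-complex distance estimates between consecutive realized curves; it fails entirely without the thickness hypothesis, in which case one is forced into the full model-manifold machinery of \cite{Minskyclassification1, Brockclassification2}.
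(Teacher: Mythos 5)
This theorem is not proven in the paper: it is quoted from Minsky, and the paper's only ``proof'' is the remark that follows the statement, which explains how to massage Minsky's published results into the exact form stated (his version of (1) plus his Proposition~6.2 give the pleated-surface form of (1); Corollary~9.3 of the rigidity paper together with Masur's results in Lemma~\ref{convergence} pin down the endpoints in $\PML(\Sigma)$; and uniqueness up to reparameterization comes from Lemma~2.4 of Farb--Mosher). Your proposal instead attempts to reprove Minsky's theorem from scratch, and while the ingredients you list are the right ones, the central step does not work as written. Knowing that consecutive pleated-surface metrics $(\Sigma,d_n')$ differ by a uniformly bounded Teichm\"uller distance, and that the two tails converge to distinct points of $\PML(\Sigma)$, does not make the sequence a quasi-geodesic: a path with bounded jumps can backtrack arbitrarily. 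The appeal to the thick-part quasi-projection estimates is circular here --- those estimates let you conclude that a path fellow-travels a thick Teichm\"uller geodesic only once you already know the path makes coarse linear progress (or already have the geodesic in hand), and saying that each short segment $[(\Sigma,d_n'),(\Sigma,d_{n+1}')]$ fellow-travels $\mathcal P$ is essentially vacuous. The genuinely hard content of Minsky's theorem is precisely this efficiency statement: that the induced metrics of pleated surfaces sweeping through a thick doubly degenerate manifold progress monotonically (coarsely) along a single Teichm\"uller geodesic, which Minsky establishes via harmonic-map and bounded-energy-map techniques, not by interpolation alone. You have also slightly misplaced the difficulty: the bounded-jump estimate for pleated surfaces realizing disjoint curves in a thick manifold is comparatively routine (bounded diameter plus compactness of the thick part of moduli space), whereas the no-backtracking claim is the theorem.

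Once that step is granted, the rest of your outline --- thickness of $\mathcal P$ from the $1$-Lipschitz property, identification of the endpoints via Masur's Lemma~\ref{convergence}, and uniqueness up to orientation-preserving reparameterization --- matches what the paper's remark does, so in effect your proposal and the paper both end up citing the same core result of Minsky; the paper simply does so explicitly rather than presenting it as a proof.
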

\noindent Here, a \it pleated surface \rm $f: (\Sigma, d') \longrightarrow N $ is a map that is an isometric embedding on the complement of some geodesic lamination on $(\Sigma, d') $.  Note that in particular any pleated surface is $1 $-lipschitz.  We refer the reader to \cite {Matsuzakihyperbolic} for more details.

\medskip
\noindent
{\it Remark.} We should mention that the statement of Theorem \ref {models} given here is slightly different than that of \cite {Minsky}.  First of all, his theorem is more general since it deals with arbitrary hyperbolic $3$-manifolds rather than doubly degenerate structures on $\Sigma \times \BR $.  Also, his $(1) $ states that $(\Sigma, d) $ can be mapped into $N $ by a map with bounded \it energy, \rm rather than giving a pleated map from a nearby point in $\CT (\Sigma) $.  The version of $(1) $ above can be deduced from his $(1) $ and Proposition 6.2 from his paper.  Next, Minsky's statement does not reference uniqueness of $\alpha $ or its endpoints in $\PML (\Sigma) $.  However, if a geodesic $\alpha $ satisfies $(1) $ then it follows from Corollary 9.3 of \cite {Minskyrigidity} all accumulation points of $\alpha $ in $\PML (\Sigma) $ are supported on one of the ending laminations of $(N,\mu) $.  Furthermore, $(1) $ implies immediately that $\alpha $ is contained in some $\epsilon' $-thick part of Teichmuller space, so Theorem \ref {convergence} shows that the ending laminations of $(N,\mu) $ support unique projective measured laminations and that these are the endpoints of $\alpha $ in $\PML (\Sigma) $. Theorem \ref {convergence} shows that any two such $\alpha $ lie at bounded Hausdorff distance, so as any such $\alpha $ is contained in the thick part of Teichmuller space the uniqueness follows from \cite [Lemma 2.4] {Farb}.

\medskip

\begin {prop}\label {existence}
Suppose that $\alpha : \BR \longrightarrow \CT (\Sigma)$ is a geodesic in the $\epsilon $-thick part of $\CT (\Sigma) $. Then there is a unique  $(N,\mu) \in \DD (\Sigma) $ that is modeled on $\alpha $, in the sense of Theorem \ref {models}.  Moreover, $inj (N) \geq \epsilon' $ for some $\epsilon' > 0 $ depending only on $\epsilon. $
\end {prop}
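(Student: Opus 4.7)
My plan is to handle uniqueness and existence separately, with the bulk of the work devoted to a Bers approximation argument that simultaneously yields the modeling property and the injectivity radius bound. For uniqueness, if $(N,\mu) \in \DD(\Sigma)$ is modeled on $\alpha$, the uniqueness clause of Theorem \ref{models} identifies its ending laminations $\CE(N,\mu)$ with the pair of projective limits $\lim_{t \to \pm\infty} \alpha(t)$. These limits exist and define filling, uniquely ergodic points of $\EL(\Sigma)$ because $\alpha \subset \CT_\epsilon(\Sigma)$ (Lemma \ref{convergence}), and Theorem \ref{ELC} then recovers $(N,\mu)$ uniquely from this pair.

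For existence I would set $\lambda_\pm := \lim_{t\to\pm\infty} \alpha(t) \in \EL(\Sigma)$ and first check that $\lambda_+ \neq \lambda_-$: if they were equal, the asymptoticity clause of Lemma \ref{convergence} would force the forward and backward rays of $\alpha$ to be asymptotic, which is absurd for a bi-infinite geodesic. Theorem \ref{ELC} then produces a candidate $(N,\mu) \in \DD(\Sigma)$ with $\CE(N,\mu) = (\lambda_+,\lambda_-)$, and the remaining task is to verify that this $(N,\mu)$ is actually modeled on $\alpha$ in the sense of Theorem \ref{models} and has a uniform lower bound on $\mathrm{inj}$.

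The approach for this last step is a quasi-Fuchsian approximation. Set $X_n^\pm = \alpha(\pm n)$ and let $Q_n$ be the Bers simultaneous uniformization with conformal boundary $\overline{X_n^-} \sqcup X_n^+$. The convex core of $Q_n$ is swept out by pleated surfaces whose domains are confined to a uniformly bounded neighborhood of the Teichm\"uller segment $\alpha|_{[-n,n]}$ by the Masur--Minsky thick-geodesic estimates already invoked in the proof of Lemma \ref{FM}. Since $\alpha \subset \CT_\epsilon(\Sigma)$ and pleated surfaces are $1$-lipschitz, every point of the $Q_n$-convex core sits within a uniformly bounded distance of the pleated image of a uniformly thick hyperbolic surface; this gives a uniform lower bound $\epsilon' = \epsilon'(\epsilon) > 0$ on the convex-core injectivity radius of $Q_n$.

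Finally, Thurston's double limit theorem applied to $X_n^\pm \to \lambda_\pm \in \EL(\Sigma)$ guarantees that a subsequence of the representations $(Q_n,\mu_n)$ converges algebraically to some $(N',\mu') \in AH(\Sigma)$; computing its ending laminations and invoking Theorem \ref{ELC} identifies $(N',\mu')$ with $(N,\mu)$. Both the thickness bound and the pleated-surface sweep-out pass to the limit, giving $\mathrm{inj}(N) \geq \epsilon'$ together with the modeling property of $\alpha$. I expect the main technical obstacle to be justifying the a priori thickness bound on the convex cores of the $Q_n$; this is essentially a special case of Minsky's thick bi-Lipschitz model theorem, and the cleanest route may well be to invoke his short-curve and pleated-surface machinery directly rather than rederive it from scratch.
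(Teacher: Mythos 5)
Your uniqueness argument and the opening of your existence argument (extracting the endpoints $\lambda_\pm$ of $\alpha$ via Lemma \ref{convergence}, checking $\lambda_+\neq\lambda_-$, and producing the candidate $(N,\mu)$ from Theorem \ref{ELC}) match the paper. Where you diverge is in how you obtain the injectivity radius bound and the modeling property, and this is where your argument has a genuine gap. The confinement of the pleated surfaces sweeping out the convex core of $Q_n$ to a bounded neighborhood of $\alpha|_{[-n,n]}$ is not a consequence of the Masur--Minsky estimates used in Lemma \ref{FM}: those control nearest-point projections to thick geodesics \emph{in Teichm\"uller space}, whereas the statement you need is Theorem \ref{models}(2), whose hypothesis is exactly the lower injectivity radius bound you are trying to prove --- so as written the argument is circular. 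Moreover, even granting the confinement, proximity to the pleated image of an $\epsilon$-thick surface does not by itself bound $\mathrm{inj}$ from below: a $1$-lipschitz map from a thick surface can pass through a deep Margulis tube of the target (for instance transversally to its core geodesic). Excluding short curves in $N$ given only that $\alpha$ is cobounded is genuinely the content of Minsky's bounded geometry theorem combined with Rafi's characterization of short curves; you correctly sense this in your closing sentence, but the proposal does not supply it.

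The paper avoids the approximation scheme entirely: it applies \cite{Minskybounded} and \cite{Raficharacterization} directly to the manifold $(N,\mu)$ produced by Theorem \ref{ELC} to get $\mathrm{inj}(N)\geq\epsilon'(\epsilon)$ --- no quasi-Fuchsian approximants, no double limit theorem, no algebraic-versus-geometric limit issues --- and then gets the modeling property for free: Theorem \ref{models} yields some thick geodesic $\alpha'$ on which $(N,\mu)$ is modeled, with the same endpoints in $\PML(\Sigma)$ as $\alpha$, and the uniqueness clause of Lemma \ref{convergence} forces $\alpha'=\alpha$. Your Bers/double-limit route can be made to work, but only after the a priori thickness bound is imported from the Minsky--Rafi package rather than derived from the sweep-out heuristic, at which point it is strictly longer than the paper's argument.
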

\begin {proof}
Let $\lambda_+,\lambda_- \in \PML (\Sigma) $ be the endpoints of $\alpha $ given by Theorem \ref {convergence}.   Then by Theorem \ref {ELC}, there is a doubly degenerate hyperbolic $3$-manifold $(N,\mu) \in \DD (\Sigma) $ with ending laminations $\{|\lambda_+|,|\lambda_-|\} $.  Combining theorems of Minsky \cite {Minskybounded} and Rafi \cite {Raficharacterization}, we see that as $\alpha $ lies in the $\epsilon $-thick part of $\CT (\Sigma) $ there is a lower bound $inj (N) \geq \epsilon' >0 $ for the injectivity radius of $N $.  Then Theorem \ref {models} shows that there is some Teichmuller geodesic $\alpha' \subset \CT (\Sigma) $ on which $(N,\mu) $ is modeled.  But the uniqueness statement in Theorem \ref {convergence}  then implies that $\alpha' =\alpha $.
\end {proof}

\subsection{Geodesics in $F $ and hyperbolic $3$-manifolds} \label {BG} Now suppose that $\gamma \in \BG (F) $ and let $\alpha_\gamma: \BR \to \CT (\Sigma)$ represent the oriented Teichmuller geodesic at bounded Hausdorff distance from $O\circ\gamma (\BZ) $ given by Lemma \ref {FM}.  Let $$(N_\gamma,\mu) \in \DD (\Sigma)$$ be the unique doubly degenerate hyperbolic $3$-manifold modeled on the geodesic $\alpha_\gamma $, as given by Proposition \ref {existence}.  Then recalling that $\gamma (0) \in \modular (\Sigma) $, we define
$$\mu_{\gamma}= \mu \circ \gamma (0) : \Sigma \longrightarrow N_\gamma. $$
This gives a map $ \BG (F) \longrightarrow \DD (\Sigma) $ defined by $\gamma \mapsto (N_\gamma,\mu_{\gamma}) $.

\medskip
\noindent
{\it Remark.}
It follows from Proposition \ref {existence} that the manifolds $N_\gamma $ above all have injectivity radius $inj (N_\gamma) \geq \epsilon $ for some $\epsilon >0 $ depending only on $F $ and $O $.
\medskip

\subsection{} The map $: \BG (F) \longrightarrow \DD (\Sigma) $ factors as
$$\xymatrix{ \BG (F) \ar[rr] \ar [dr] & &  \DD (\Sigma) \\ & \BG (F)/F \ar[ur]& } $$
To see this, suppose that $\gamma \in \BG (F) $ and $g \in F $. Then $O (g \cdot \gamma) = g O (\gamma) $, so the nearby Teichmuller geodesics satisfy $\alpha_{g\gamma} = g \alpha_\gamma $.  However,  if $(N_\gamma,\mu) $ is modeled on $\alpha_\gamma $ then it follows immediately from the conditions in Theorem \ref {models} that $(N_\gamma,\mu \circ g ^ {- 1} )$ is modeled on $\alpha_{g\gamma} $.  So, $N_\gamma = N_{g\gamma} $ and $\mu_{g\gamma} = \mu \circ g ^ {- 1} \circ (g \cdot \gamma (0)) = \mu \circ \gamma (0) =\mu_\gamma. $
\begin {prop}\label {BGembeds}
The map  $ \BG (F)/F \to \DD (\Sigma) ,\  [\gamma] \mapsto (N_\gamma,\mu_\gamma) $ is an embedding.
\end {prop}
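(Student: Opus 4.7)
The strategy is to establish (i) injectivity and (ii) continuity of the map; (iii) continuity of the inverse on its image will then follow since $\BG(F)/F$ is compact. Indeed, each class has a unique representative with $\gamma(0)=1$, and the step assignment $\gamma \mapsto (\gamma(i)^{-1}\gamma(i+1))_{i \in \BZ}$ identifies $\BG(F)/F$ with a closed, shift-invariant subset of the compact product space $\{\phi_1^{\pm 1},\ldots,\phi_n^{\pm 1}\}^{\BZ}$. A continuous injection from a compact space into the Hausdorff space $\DD(\Sigma)$ is automatically a closed embedding.

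\emph{Injectivity.} Assume $(N_\gamma,\mu_\gamma)=(N_{\gamma'},\mu_{\gamma'})$ in $\DD(\Sigma)$ and normalize $\gamma(0)=\gamma'(0)=1$. By the uniqueness statement in Theorem \ref{models}, the modeling Teichmuller geodesics $\alpha_\gamma$ and $\alpha_{\gamma'}$ agree up to orientation preserving reparametrization. Lemma \ref{FM} then places both $O(\gamma(\BZ))$ and $O(\gamma'(\BZ))$ at bounded Hausdorff distance from this common geodesic, and hence from each other. Since $O : F \to \CT(\Sigma)$ is a quasi-isometric embedding, $\gamma(\BZ)$ and $\gamma'(\BZ)$ are at bounded Hausdorff distance in the Cayley tree of $F$, and two bi-infinite geodesics in a tree that fellow-travel at bounded distance must coincide as subsets. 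The orientation convention fixed in Lemma \ref{FM} together with the injectivity of $O$ on $\partial_\infty F$ forces $\lim_{i\to\pm\infty}\gamma(i)=\lim_{i\to\pm\infty}\gamma'(i)$, and since both geodesics pass through $1$ with matching orientations we conclude $\gamma=\gamma'$.

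\emph{Continuity.} Suppose $\gamma_n \to \gamma$ in $\BG(F)/F$ with $\gamma_n(0)=\gamma(0)=1$. For each fixed $i \in \BZ$ one has $\gamma_n(i)=\gamma(i)$ for $n$ large, so $\lim_{i\to\pm\infty}\gamma_n(i) \to \lim_{i\to\pm\infty}\gamma(i)$ in $\partial_\infty F$. The continuous $F$-equivariant embedding $O:\partial_\infty F \to \PML(\Sigma)$ propagates this to convergence in $\PML(\Sigma)$. By the final clause of Theorem \ref{models}, the limits so obtained determine the ordered pair $\CE(N_{\gamma_n},\mu_{\gamma_n})$ by passing to supports; since $\alpha_\gamma$ is uniformly thick, Lemma \ref{convergence} ensures that these limits are filling and uniquely ergodic, so convergence in $\PML(\Sigma)$ descends to convergence in $\EL(\Sigma)$. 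Thus $\CE(N_{\gamma_n},\mu_{\gamma_n}) \to \CE(N_\gamma,\mu_\gamma)$ in $\EL(\Sigma) \times \EL(\Sigma) \setminus \Delta$, and Theorem \ref{ELC} delivers the desired convergence in $\DD(\Sigma)$.

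The most delicate technical point I anticipate is the bookkeeping of orientations and end-labellings: verifying that the ordered pair $\CE(N_\gamma,\mu_\gamma)$ matches, in the correct order, the oriented pair of boundary limits of $O\circ\gamma$ in $\PML(\Sigma)$. This is precisely what allows $[\gamma]$ to be unambiguously recovered from its image in $\DD(\Sigma)$; it follows by combining the orientation-fixing conventions of Lemma \ref{FM} and Theorem \ref{models} with the $F$-equivariance of $O$, but requires care to set up consistently.
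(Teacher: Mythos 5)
Your argument is correct and takes essentially the same route as the paper: your continuity step is exactly the paper's factorization through $\EL(\Sigma)\times\EL(\Sigma)-\Delta$ (the map $E$ followed by $\CE^{-1}$ from Theorem \ref{ELC}), and the conclusion via compactness of the normalized slice $\{\gamma(0)=1\}$ mapping to the Hausdorff space $\DD(\Sigma)$ is the paper's closing step verbatim. The only cosmetic difference is that you obtain injectivity by fellow-traveling in the Cayley tree via the uniqueness clause of Theorem \ref{models}, whereas the paper deduces it from injectivity of $E$ (endpoints in $\partial_\infty F$ determine a normalized geodesic, $O$ is an embedding, unique ergodicity) together with injectivity of $\CE$; both are sound.
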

\begin {proof}
As mentioned above, the quotient map $\BG (F) \longrightarrow \BG (F) / F $ restricts to a homeomorphism on the subset $\BG_0 (F) $ consisting of geodesic $\gamma \in \BG (F) $ with $\gamma (0) = 1 $.  Using the notation of section \ref {BG}, for $\gamma \in \BG_0 (F) $ we have $\mu =\mu_\gamma $.  Therefore,
$$\xymatrix{ \BG_0 (F) \ar[rr]^ {\gamma \mapsto (N_\gamma,\mu_\gamma)} \ar [dr]_E & &  \DD (\Sigma) \\ & \EL (\Sigma) \times \EL (\Sigma) - \Delta\ar[ur]_{\CE^ {- 1}}& } $$
commutes, where $E $ takes a geodesic $\gamma : \BZ \to F $ to the pair of supports of $$\left (O \left(\lim_{t \to \infty}\gamma (t)\right), O \left(\lim_{t \to -\infty}\gamma(t)\right)\right) \in \PML (\Sigma) \times \PML (\Sigma) - \Delta$$ and $\CE $ is the map that takes a doubly degenerate hyperbolic $3$-manifold to its pair of ending laminations (see Theorem \ref {ELC}).  The maps $E $ and $\CE^ {- 1} $ are continuous and injective by Lemma \ref {FM} and Theorem \ref {ELC}, respectively.  As the domain $\BG_0 (F) $ is compact and the co-domain $\DD (\Sigma) $ is Hausdorff, our map is an embedding.
\end{proof}

\subsection{The topologies of $\BG (F)/F $ and $\DD (\Sigma)$} \label {topology}
Let $(\Sigma, d_0) = O (1) $ be the point in Teichmuller space that is the image of the identity $1\in F $.  We then have:

\begin {lem}\label {basepoints}
For all sufficiently large $C= C (F, O)>0 $, there is some $ D >0 $ as follows.  If $\gamma \in \BG (F) $ then the map $\mu_\gamma : \Sigma \to N_\gamma $ is homotopic to a $C $-lipschitz map $$m : (\Sigma, d_0) \longrightarrow N_\gamma. $$  Moreover, after fixing $\gamma $, the union $P_\gamma^ C =  \bigcup_m \, m(\Sigma) $ of the images of all such maps is a subset of $N_\gamma $ with diameter at most $D $.
\end {lem}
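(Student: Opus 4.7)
The proof splits into existence of the Lipschitz representative and the diameter bound for the union of images. The first step would be to exploit $F$-equivariance: as noted in the discussion preceding Proposition \ref{BGembeds}, the pair $(N_\gamma,\mu_\gamma)$ depends only on the $F$-orbit of $\gamma$, so we may replace $\gamma$ with $\gamma(0)^{-1}\cdot\gamma$ and assume $\gamma(0)=1$. Then $O(\gamma(0))=(\Sigma,d_0)$, and by Lemma \ref{FM} the Teichm\"uller geodesic $\alpha_\gamma$ modeling $N_\gamma$ passes within a universal distance of $(\Sigma,d_0)$.

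For the existence of $m$, Theorem \ref{models}(1) applied at a point of $\alpha_\gamma$ nearest $(\Sigma,d_0)$ yields a pleated surface $f:(\Sigma,d')\to N_\gamma$ in the homotopy class of $\mu_\gamma$, with $d_{\CT}(d',d_0)$ bounded above by a universal constant $C_1=C_1(F,O)$. Both $(\Sigma,d_0)$ and $(\Sigma,d')$ lie in a fixed $\epsilon$-thick part of $\CT(\Sigma)$---the former trivially, the latter by the remark at the end of Section \ref{BG}. Hence the identity homotopy class contains a $K$-bilipschitz map $h:(\Sigma,d_0)\to(\Sigma,d')$ with $K=K(C_1,\epsilon)$, and setting $m_0=f\circ h$ gives a $K$-Lipschitz map in the homotopy class of $\mu_\gamma$. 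Taking $C=K$ proves the first assertion.

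For the diameter bound, the plan is to show the Lipschitz constraint traps $m$ near a central region of $N_\gamma$. Given a $C$-Lipschitz map $m:(\Sigma,d_0)\to N_\gamma$ in the homotopy class of $\mu_\gamma$, I would first homotope $m$ to a nearby pleated surface $f_m:(\Sigma,d_m)\to N_\gamma$, pleating along a filling lamination obtained from simple closed curves whose $m$-images are uniformly short loops in $N_\gamma$; standard arguments give $f_m(\Sigma)$ at uniformly bounded Hausdorff distance from $m(\Sigma)$. The $C$-Lipschitz bound on $m$ then bounds the lengths of a fixed marking family $\{g_i\}\subset\pi_1\Sigma$ on $(\Sigma,d_m)$ above by $C\cdot\max_i|g_i|_{d_0}$, while the $\epsilon$-thickness of $N_\gamma$ bounds them below by $\epsilon$. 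By Kerckhoff's formula for Teichm\"uller distance together with the comparability of hyperbolic and extremal lengths on the thick part, this two-sided control on the marking confines $(\Sigma,d_m)$ to a bounded Teichm\"uller neighborhood of $(\Sigma,d_0)$. Combined with Theorem \ref{models}(2), which places $(\Sigma,d_m)$ within bounded distance of $\alpha_\gamma$, the parameter along $\alpha_\gamma$ at which $(\Sigma,d_m)$ sits is confined to a bounded interval uniformly in $\gamma$, and Theorem \ref{models}(1) places $f_m(\Sigma)$---hence $m(\Sigma)$---in a bounded region of $N_\gamma$.

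The principal technical obstacle is the pleated-surface approximation $m\simeq f_m$ with a quantitative Hausdorff distance bound, which requires producing a filling collection of simple closed curves on $\Sigma$ that $m$ sends to short loops freely homotopic to geodesics in $N_\gamma$ which jointly support a pleated surface near $m(\Sigma)$. Once this geometric approximation is in place, the remaining steps are bookkeeping with Minsky's model and standard Teichm\"uller-metric estimates.
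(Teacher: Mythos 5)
Your first half coincides with the paper's: normalize so that $O(\gamma(0))=(\Sigma,d_0)$, apply Theorem \ref{models}(1) at a point of $\alpha_\gamma$ near $O(\gamma(0))$ to get a pleated surface from a nearby thick point of $\CT(\Sigma)$, and precompose with a Lipschitz map $(\Sigma,d_0)\to(\Sigma,d')$ supplied by the bounded Teichm\"uller distance. That part is fine.

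The diameter bound is where the proposal has a genuine gap, and you name it yourself: the approximation of an arbitrary $C$-Lipschitz $m$ by a pleated surface $f_m$ at controlled Hausdorff distance is not proved, and it is the load-bearing step of your argument. There is also a second soft spot at the end: Theorem \ref{models}(1) produces, for a point on $\alpha_\gamma$, \emph{some} pleated surface from a nearby point of $\CT(\Sigma)$; it does not by itself say that every pleated surface whose domain lies in a bounded Teichm\"uller region has image in a bounded region of $N_\gamma$, so even after confining $(\Sigma,d_m)$ you would need a further argument to confine $f_m(\Sigma)$. The paper avoids the entire Teichm\"uller-theoretic detour with a one-curve anchoring argument that you should adopt: fix once and for all an essential closed curve $a$ on $\Sigma$, and let $a^*\subset N_\gamma$ be the closed geodesic representing $\mu_\gamma(a)$. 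Any $C$-Lipschitz $m$ homotopic to $\mu_\gamma$ sends $a$ to a loop of length at most $C\,\length_{d_0}(a)$ that is freely homotopic to $a^*$; since $\length(a^*)\geq 2\,\inj(N_\gamma)\geq 2\epsilon(F,O)$, a standard hyperbolic estimate places $m(a)$ within some $D'=D'(C,\epsilon)$ of $a^*$. As each image $m(\Sigma)$ has diameter at most $C\,\diam(\Sigma,d_0)$ and meets the $D'$-neighborhood of the fixed set $a^*$, the union $P_\gamma^C$ has uniformly bounded diameter. No pleated-surface approximation of $m$, no marking-length estimates, and no appeal to the model theorem are needed for this half.
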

\begin {proof}
With the notation of section \ref {BG}, the point $O (\gamma (0)) $ lies at a bounded distance from the geodesic $\alpha_\gamma$.  By Theorem \ref {models} $(1) $, there is a pleated surface $f' : (\Sigma , d') \to N_\gamma $ in the homotopy class of $\mu $ such that $$\dist\big (\, (\Sigma, d'), \, O (\gamma (0))\, \big) \leq C', \\ \text { for some } C' = C' (F, O). $$ But then there is a pleated surface $f : (\Sigma, \gamma (0) ^ {- 1}_* d) \to N_\gamma $ given by $f = f' \circ \gamma (0) $ homotopic to $\mu_\gamma $.  Since the Teichmuller distance between $(\Sigma, \gamma (0) ^ {- 1}_*d) $ and $ (\Sigma, d_0) $ is bounded by $C' $, there is a $C $-lipschitz map $(\Sigma, d_0)\to (\Sigma, \gamma (0) ^ {- 1}_* d) $ homotopic to the identity map \cite{Choicomparison}.  Composing this with $f $ yields the map $m $ desired.

For the diameter bound on $P_\gamma $, fix an essential closed curve $a $ on $\Sigma $.  Then if $$m: (\Sigma,d_0) \longrightarrow N_\gamma $$ is a $C $-lipschitz map homotopic to $\mu_\gamma $, the image $m (a) $ is a closed curve in $N_\gamma $ with length at most $C\,  \length_{d_0} (a) $.  As the geodesic representative of $\mu_\gamma (a) $ has length at least $inj (N_\gamma) \geq \epsilon =\epsilon (F) >0 $, its distance to $m (a) $ is at most some $D' = D' (F) $.  Therefore,  $D = 2 D' + C \diam (\Sigma, d_0) $ is a bound for the diameter of the union $P_\gamma $.
\end{proof}

One way to interpret Lemma \ref {basepoints} is that one can regard the image of $ \BG (F) $ as a space of hyperbolic $3$-manifolds with preferred coarsely defined basepoints.  The basepoint for $(N_\gamma,\mu_\gamma) $ is just any point contained in $P_\gamma $; these basepoints are then well-defined up to a universally bounded error.
This viewpoint motivates the following lemma, which should be interpreted as saying that $\BG (F) $ maps continuously when the co-domain is given the topology of based Gromov-Hausdorff convergence.

\begin {lem}\label {basedcontinuous}
Suppose that $\gamma_i \to \gamma $ in $\BG (F) $ and let $C$ be as in Lemma \ref {basepoints}.  If $p_i \in P_{\gamma_i}^ C $ and $(N_{\gamma_i}, p_i) $ converges in the Gromov-Hausdorff topology to a based hyperbolic $3$-manifold $(N, p) $, then $N $ is isometric to $N_\gamma $.
\end {lem}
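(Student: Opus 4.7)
The plan is to identify $N$ with the unique element of $\DD(\Sigma)$ modeled on the Teichmuller geodesic $\alpha_\gamma$; the Ending Lamination Theorem (Theorem \ref{ELC}) then gives $N \cong N_\gamma$. The strategy is to transfer the pleated-surface model of $N_{\gamma_i}$ on $\alpha_{\gamma_i}$ to a pleated-surface model of $N$ on $\alpha_\gamma$ by taking pointed Gromov--Hausdorff limits.

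First I would establish convergence of the underlying Teichmuller geodesics. The compact-open topology on $\BG(F)$ with discrete target $F$ forces $\gamma_i|_{[-R,R]} = \gamma|_{[-R,R]}$ for every fixed $R$ and all sufficiently large $i$. By Lemma \ref{FM}, each $\alpha_{\gamma_i}$ lies at a universal Hausdorff distance from $O\circ\gamma_i(\BZ)$ inside a common thick part $\CT_\epsilon(\Sigma)$, so the stability of Teichmuller geodesics in the thick part (\cite[Theorem 4.2]{Minskyquasi-projections}, already invoked in the proof of Lemma \ref{FM}) yields, after an orientation-preserving reparametrization, compact-open convergence $\alpha_{\gamma_i} \to \alpha_\gamma$. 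Meanwhile the Remark following \S\ref{BG} gives a uniform lower bound $\inj(N_{\gamma_i}) \geq \epsilon$ that passes to the pointed limit to give $\inj(N) \geq \epsilon$, so $N$ is a complete hyperbolic $3$-manifold.

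Next I would build a marking and pleated surfaces on $N$ by Arzela--Ascoli. Composing the $C$-Lipschitz maps $m_i : (\Sigma, d_0) \to N_{\gamma_i}$ from Lemma \ref{basepoints} with the Gromov--Hausdorff approximations $N_{\gamma_i} \to N$ and passing to a subsequence produces a $C$-Lipschitz limit $m : (\Sigma, d_0) \to N$ with $m(\Sigma) \ni p$. The same argument applied to the pleated (or bounded-energy) surfaces provided by Theorem \ref{models}(1) at every point of $\alpha_{\gamma_i}$ produces surfaces in $N$ near every point of $\alpha_\gamma$, all in the homotopy class of $m$. The existence of such surfaces exiting both directions along $\alpha_\gamma$ shows that $(N, m) \in \DD(\Sigma)$ and that $\alpha_\gamma$ satisfies condition (1) of Theorem \ref{models} for $(N,m)$. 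The Remark following Theorem \ref{models} then identifies the ending laminations of $(N,m)$ with the supports of the endpoints of $\alpha_\gamma$ in $\PML(\Sigma)$, which are also the ending laminations of $(N_\gamma, \mu_\gamma)$, so Theorem \ref{ELC} forces $N \cong N_\gamma$.

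The main obstacle is the Arzela--Ascoli step: extracting genuine pleated or bounded-energy surfaces in the Gromov--Hausdorff limit while keeping track of their homotopy classes relative to the converging markings, since the pleating loci can a priori degenerate. The uniform injectivity lower bound on the $N_{\gamma_i}$, together with the uniform thickness and compact-open convergence of the $\alpha_{\gamma_i}$ in $\CT(\Sigma)$, is exactly what makes the standard surface-limit and marking-persistence arguments carry through.
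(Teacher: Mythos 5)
Your route (build a marking and a family of pleated surfaces on the Gromov--Hausdorff limit $N$ directly, verify condition (1) of Theorem \ref{models} for $\alpha_\gamma$, and conclude via Theorem \ref{ELC}) is genuinely different from the paper's, and it founders on the one point that is the real content of the lemma. The paper instead first observes that $(N_{\gamma_i},\mu_{\gamma_i})\to(N_\gamma,\mu_\gamma)$ in $\DD(\Sigma)$ by Proposition \ref{BGembeds}, i.e.\ the holonomy representations $\rho_i$ converge algebraically to $\rho_\gamma$; since the uniform bound $\inj(N_{\gamma_i})\geq\epsilon$ forces $\rho_\gamma(\pi_1\Sigma)$ to have no parabolics, Anderson--Canary \cite[Theorem F]{Andersoncores} upgrades this to Chabauty (geometric) convergence of the groups, and \cite[Theorem E.1.13]{Benedetti-Petronio} then identifies the Gromov--Hausdorff limit, based at the projections $\bar p$ of a \emph{fixed} point of $\BH^3$, with $N_\gamma$. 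The only remaining step is to show $\dist(\bar p, p_i)$ is uniformly bounded, which the paper does by locating both $\bar p$ and $p_i$ near the geodesic representatives of $\mu_{\gamma_i}(a)$ for a fixed essential curve $a$ (these are projections of the axes of the converging elements $\rho_i([a])$).

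The gap in your argument is the assertion that ``the existence of such surfaces exiting both directions along $\alpha_\gamma$ shows that $(N,m)\in\DD(\Sigma)$.'' A pointed Gromov--Hausdorff limit is a \emph{geometric} limit, and a priori it may be strictly larger than the algebraic limit: $m_*$ need not surject onto $\pi_1 N$, and $N$ need not be homeomorphic to $\Sigma\times\BR$, even if it contains a bi-infinite family of homotopic $\pi_1$-injective surfaces coming from the $N_{\gamma_i}$. Ruling this out is exactly the content of \cite[Theorem F]{Andersoncores}, which your proposal never engages with; without it you cannot apply Theorem \ref{models} or Theorem \ref{ELC} to $(N,m)$, since those concern elements of $\DD(\Sigma)$. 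Relatedly, your Arzela--Ascoli step quietly assumes that the limit surfaces deep in the ends of $N$ all lie in the homotopy class of the limit marking $m$; the homotopies witnessing this in $N_{\gamma_i}$ have unbounded tracks and do not pass to the Gromov--Hausdorff limit without the algebraic-convergence framework. The rest of your outline (compact-open convergence $\alpha_{\gamma_i}\to\alpha_\gamma$ via \cite[Theorem 4.2]{Minskyquasi-projections}, persistence of the injectivity radius bound, and the use of $p_i\in P_{\gamma_i}^C$ to anchor the limit marking at $p$) is sound, but these are the easier parts.
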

\begin {proof}
By Proposition \ref {BGembeds}, the pairs $(N_{\gamma_i},\mu_{\gamma_i}) $ converge to $(N_\gamma,\mu_\gamma) $ in $\DD (\Sigma) $.  Let $\rho_i : \pi_1 \Sigma \longrightarrow \PSL_2\BC $ be representations with $N_{\gamma_i} \cong \BH ^ 3 / \rho_i (\pi_1\Sigma) $ in such a way that the composition of $\mu_* : \pi_1 \Sigma \to \pi_1 N_{\gamma_i}$ with the holonomy map gives $\rho_i $, and assume by conjugation that $\rho_i $ converges to some $\rho_\gamma $ similarly associated with $(N_\gamma,\mu_\gamma) $.  As there is a universal lower bound for the injectivity radii $inj (N_{\gamma_i}) $ by Lemma \ref {FM} and Proposition \ref {existence}, the image $\rho_\gamma (\pi_1\Sigma) $ does not contain parabolic elements.  So, \cite[Theorem F]{Andersoncores}
implies that $\rho_i (\pi_1 \Sigma) $ converges to $\rho_\gamma (\pi_1\Sigma) $ in the Chabauty topology.

Fix some point $p \in \BH ^ 3 $ and denote its projection to quotients by $\bar p $.  By \cite[Theorem E.1.13]{Benedetti-Petronio} the manifolds $(N_{\gamma_i},\bar p) $ converge in the Gromov Hausdorff topology to a based hyperbolic $3$-manifold isometric to $N_\gamma $.  As noted in the proof of the lemma above, the points $p_i $ lie at bounded distance from the geodesic representatives in $N_{\gamma_i} $ of the loops $\mu_{\gamma_i} (a) $, where $a \subset \Sigma $ is any essential closed curve.  However, the same is true of the points $\bar p $ since these geodesic representatives are the projections of the axes of elements $\rho_i ([a]) \in \PSL_2\BC $, which converge as $i \to \infty $.  Therefore, the distances $\dist (\bar p, p_i) $ are universally bounded above, which implies immediately that the Gromov Hausdorff limit $N $ is isometric to $N_\gamma $.
\end{proof}

It is clear from the definitions that shifting a geodesic $\gamma \in \BG (F) $ does not change the isometry type of the manifold $N_\gamma $.  Adopting the viewpoint suggested by Lemma \ref {basepoints}, shifting $\gamma $ just changes the coarsely defined basepoint $P_\gamma $ for $N_\gamma $.  The following lemma shows that the basepoints $P_{S ^ n (\gamma)} $ of shifts of $\gamma $ coarsely cover $N_\gamma $.

\begin {lem}\label {density}
There exists some $C = C (F, O)>0 $ such that if $\gamma \in \BG (F) $ and $p \in N_\gamma $, then there exists some shift $S ^ n (\gamma) $ and an isometry $$i : N_\gamma \longrightarrow N_{S ^ n(\gamma)}$$ such that the distance between $i (p) $ and $P^ C_{S ^ n(\gamma)} $ is at most $C $.
\end {lem}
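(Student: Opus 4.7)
The plan is to exploit that shifts of $\gamma$ yield the \emph{same} underlying hyperbolic $3$-manifold (up to marking-preserving isometry) but with the coarse basepoint translated along the modeling Teichm\"uller geodesic, and then to show that as $n$ varies the coarse basepoint ranges over a subset of $N_\gamma$ that is uniformly dense.

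First I would observe that for every $n\in\BZ$ the sets $O\circ\gamma(\BZ)$ and $O\circ S^n(\gamma)(\BZ)$ coincide in $\CT(\Sigma)$, so Lemma \ref{FM} forces $\alpha_{S^n(\gamma)}$ to agree with $\alpha_\gamma$ up to orientation-preserving reparametrization; in particular both geodesics have the same pair of endpoints in $\PML(\Sigma)$. Theorem \ref{ELC} then gives $(N_{S^n(\gamma)},\tilde\mu)=(N_\gamma,\mu)$ as elements of $\DD(\Sigma)$, so I may fix an isometry $i:N_\gamma\to N_{S^n(\gamma)}$ with $i\circ\mu$ homotopic to $\tilde\mu$. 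Pulling back through $i$ and re-reading the proof of Lemma \ref{basepoints}, the set $i^{-1}(P^C_{S^n(\gamma)})$ lies within a universal distance of the image of some pleated surface $h_n:\Sigma\to N_\gamma$ in the homotopy class of $\mu$ whose Teichm\"uller point is within a universal constant of $O(\gamma(-n))$.

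Second, I would apply Theorem \ref{models}(1) to produce, for each $s\in\BR$, a pleated surface $f_s:\Sigma\to N_\gamma$ in the class of $\mu$ whose Teichm\"uller point lies within $C$ of $\alpha_\gamma(s)$. Since $inj(N_\gamma)\ge\epsilon$ uniformly (Proposition \ref{existence}), each $f_s(\Sigma)$ has uniformly bounded diameter, and since the endpoints of $\alpha_\gamma$ are the ending laminations of $N_\gamma$, the images $f_s(\Sigma)$ exit the two ends of $N_\gamma$ as $s\to\pm\infty$. A standard sweepout argument in the uniformly thick doubly degenerate setting then yields a universal $R$ such that every point of $N_\gamma$ lies within distance $R$ of $f_s(\Sigma)$ for some $s$.

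Third, given $p\in N_\gamma$ choose such an $s$. By Lemma \ref{FM} the orbit $O\circ\gamma(\BZ)$ is a coarse net in $\alpha_\gamma(\BR)$ with coarsely compatible parametrizations, so there is $m\in\BZ$ with $O(\gamma(m))$ within universal Teichm\"uller distance of $\alpha_\gamma(s)$, hence of the Teichm\"uller point of $f_s$. Setting $n=-m$, the pleated surfaces $f_s$ and $h_n$ lie in the same homotopy class with Teichm\"uller points at bounded distance in $\CT_\epsilon(\Sigma)$; in the uniformly thick case this forces their images to be at bounded Hausdorff distance in $N_\gamma$ (for example by comparing their restrictions to a filling multicurve on $\Sigma$, using that both maps are $1$-Lipschitz from nearby thick points of $\CT(\Sigma)$). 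Hence $h_n(\Sigma)$ lies within a universal distance of $p$, and translating back by $i$ yields that $i(p)$ lies within a universal distance of $P^C_{S^n(\gamma)}$, as required.

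The main obstacle is the coarse exhaustion assertion in the second step: that the one-parameter family $\{f_s\}_{s\in\BR}$ covers $N_\gamma$ with a \emph{universal} gap bound depending only on $F$ and $O$. This is a variant of sweepout statements routinely used in Bonahon's work on exhaustion of degenerate ends and in Minsky's bounded-geometry model theorem, but one must verify that the gap bound is uniform over all $\gamma\in\BG(F)$, which comes down to extracting quantitative control from the common lower injectivity bound $\epsilon=\epsilon(F,O)$ rather than from any feature of an individual $\gamma$.
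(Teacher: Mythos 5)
Your overall strategy is sound and your bookkeeping with shifts, markings, and the sets $P^C_{S^n(\gamma)}$ matches what the lemma requires, but you have routed the whole argument through the one claim you yourself flag as the main obstacle --- the uniform sweepout assertion that the family $\{f_s(\Sigma)\}_{s\in\BR}$ coarsely exhausts $N_\gamma$ --- and that claim is not something you can wave at as ``standard'' without essentially reproving the paper's argument. The difficulty is that pleated surfaces supplied by Theorem \ref{models}(1) do not form a continuous family in $s$, so there is no honest sweepout/degree argument available directly; the usual way to show that such surfaces coarsely fill a thick doubly degenerate manifold is to run the implication in the \emph{opposite} direction, and that is exactly what the paper does. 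Namely: given $p\in N_\gamma$, Thurston's work shows there is an essential closed curve of length at most $C(\Sigma)$ through $p$, and that this curve is realized geodesically by a pleated surface $f:(\Sigma,d)\to N_\gamma$ in the homotopy class of $\mu$; the lower injectivity radius bound $inj(N_\gamma)\ge\epsilon(F,O)$ then forces $f(\Sigma)$ to pass within a universal distance of $p$. Theorem \ref{models}(2) places $(\Sigma,d)$ within bounded Teichm\"uller distance of $\alpha_\gamma$, and Lemma \ref{FM} makes $O\circ\gamma(\BZ)$ a coarse net in $\alpha_\gamma(\BR)$, so some $O(\gamma(i))$ is uniformly close to $(\Sigma,d)$; composing a uniformly Lipschitz map $(\Sigma,d_0)\to O(\gamma(i))\to(\Sigma,d)$ with $f$ gives the required $C$-Lipschitz map in the class of $\mu\circ\gamma(i)$ with image near $p$. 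So your steps one and three are fine and agree with the paper, but to close step two you should replace the sweepout by this point-to-pleated-surface construction: start from $p$, produce the pleated surface through it, and only then locate the corresponding $\gamma(i)$ --- rather than starting from the geodesic and trying to show its pleated surfaces reach $p$.
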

\begin {proof}
As in section \ref {BG}, assume that $(N_\gamma,\mu) $ is the element of $\DD (\Sigma) $ modeled on the Teichmuller geodesic at bounded Hausdorff distance from $\gamma (\BZ) $.  To prove the lemma, we need to show that for large $C $ and some $i \in \BZ $, there is a $C $-lipschitz map $(\Sigma, d_0) \longrightarrow N_\gamma $ in the homotopy class of $\mu \circ \gamma (i) $ whose image is within $C $ of $p $.

As $N_\gamma $ is doubly degenerate, it follows from work of Thurston \cite {Thurstongeometry} that through the point $p \in N_\gamma $ there is some essential closed curve with length at most some $C (\Sigma)  $.   Again by \cite {Thurstongeometry}, this closed curve is geodesically realized by a pleated surface $f : (\Sigma , d) \longrightarrow N_\gamma $ in the homotopy class of $\mu $.  Since $inj (N_\gamma) \geq \epsilon=\epsilon (F, O) $, the distance from $p $ to $f (\Sigma) $ is at most some $ C (C',\epsilon) =C (F, O) $.

By Theorem \ref {models} and Lemma \ref {FM}, there is a point $\gamma (i) \in \CT (\Sigma) $ at Teichmuller distance at most some $C (F, O) $ from $(\Sigma, d) $.  Therefore, there is a $C(F, O) $-lipschitz map $g : O (\gamma (i) ) \to (\Sigma, d) $ homotopic to the identity map \cite{Choicomparison}.  Then
$$\xymatrix{ (\Sigma , d_0 ) = O(\gamma (0)) \ar[rr]^(.6){\gamma (i)} & &  O (\gamma (i)) \ar [rr]^ {g \sim id} &  &(\Sigma, d) \ar[rr] ^ {f \sim \mu} & &N_\gamma} $$
composes to a $C $-lipschitz map homotopic to $\mu \circ \gamma (i) $ as desired.
\end{proof}

We use this lemma to prove the following proposition, which loosely states that Gromov Hausdorff limits can always be realized as limits in $\BG (F)/F $ up to shifts.

\begin {prop}\label {GromovHausdorfflimits}
Suppose $\gamma_i $ is a sequence in $\BG (F) $, $p_i \in N_{\gamma_i} $ and that $(N_{\gamma_i}, p_i) $ converges in the Gromov Hausdorff topology to $(N, p) $.  Then after passing to a subsequence, there is a sequence $(n_i) $ in $\BZ $ and some $\gamma \in \BG (F) $ such that
 $$[S ^ {n_i} (\gamma_i)] \longrightarrow [\gamma] \in \BG (F)/F $$ and $N_\gamma $ is isometric to $N $.
\end {prop}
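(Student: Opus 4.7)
The plan is to reduce to a situation where Lemma \ref{basedcontinuous} applies. First, I would use Lemma \ref{density} to shift each $\gamma_i$ so that its coarse basepoint lies near $p_i$; then a compactness argument on $\BG(F)/F$ would extract a convergent subsequence of geodesics; finally, Lemma \ref{basedcontinuous} would identify the Gromov--Hausdorff limit with $N_\gamma$. The main subtlety will be keeping track of the various identifications arising from shifting, from the $F$-action, and from the corresponding isometries of the manifolds $N_\gamma$.

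For each $i$, Lemma \ref{density} produces an integer $n_i$ and an isometry $\iota_i : N_{\gamma_i} \to N_{S^{n_i}(\gamma_i)}$ with $\dist(\iota_i(p_i), P^C_{S^{n_i}(\gamma_i)}) \leq C$; pick $q_i \in P^C_{S^{n_i}(\gamma_i)}$ within $C$ of $\iota_i(p_i)$. Since $\iota_i$ is an isometry, $(N_{S^{n_i}(\gamma_i)}, \iota_i(p_i))$ is isometric to $(N_{\gamma_i}, p_i)$ and hence converges in the pointed Gromov--Hausdorff topology to $(N,p)$. The uniform lower bound on $\inj(N_\gamma)$ (remark following Section \ref{BG}) together with the uniform distance bound $\dist(\iota_i(p_i), q_i) \leq C$ then implies, after passing to a subsequence, that $(N_{S^{n_i}(\gamma_i)}, q_i) \to (N, p')$ for some $p' \in N$ with $\dist(p, p') \leq C$.

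Next, I would use the compactness of $\BG(F)/F$: the rule $\gamma \mapsto \gamma(0)^{-1}\cdot\gamma$ identifies this quotient with the subspace $\BG_0(F)$ of geodesics with $\gamma(0) = 1$, and any such $\gamma$ has $\gamma(i)$ in the (finite) word-metric sphere of radius $|i|$ about $1 \in F$, so $\BG_0(F)$ is compact by Tychonoff. After a further subsequence, $[S^{n_i}(\gamma_i)] \to [\tilde\gamma]$ in $\BG(F)/F$ for some $\tilde\gamma \in \BG_0(F)$; concretely, there exist $g_i \in F$ with $\tilde\gamma_i := g_i^{-1}\cdot S^{n_i}(\gamma_i) \in \BG_0(F)$ and $\tilde\gamma_i \to \tilde\gamma$ in $\BG(F)$.

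Finally, because the map $\BG(F) \to \DD(\Sigma)$ factors through $\BG(F)/F$, the manifolds $N_{\tilde\gamma_i}$ and $N_{S^{n_i}(\gamma_i)}$ are canonically isometric and the sets $P^C_{\tilde\gamma_i}$, $P^C_{S^{n_i}(\gamma_i)}$ correspond under this isometry; let $\tilde q_i \in P^C_{\tilde\gamma_i}$ be the image of $q_i$. Then $(N_{\tilde\gamma_i}, \tilde q_i) \to (N, p')$ in the pointed Gromov--Hausdorff topology, so Lemma \ref{basedcontinuous} applied to $\tilde\gamma_i \to \tilde\gamma$ in $\BG(F)$ with basepoints $\tilde q_i \in P^C_{\tilde\gamma_i}$ gives $N \cong N_{\tilde\gamma}$. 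Setting $\gamma := \tilde\gamma$ will finish the proof.
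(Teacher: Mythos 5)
Your proposal is correct and follows essentially the same route as the paper's proof: shift via Lemma \ref{density} to bring the basepoints into (or near) the coarse basepoint sets, translate into $\BG_0(F)$ and extract a convergent subsequence by compactness, then conclude with Lemma \ref{basedcontinuous}. You simply spell out more carefully the bounded basepoint adjustment and the identifications under the $F$-action, which the paper's terser argument leaves implicit.
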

\begin {proof}
By Lemma \ref {density}, we may assume after shifting each $\gamma_i $ that the base points $p_i $ lie in $P ^C_{\gamma_i} $.  Without changing their images in $\BG (F)/F $, we may translate the geodesics $\gamma_i $ so that $\gamma_i (0) = 1 $ for all $i $.  Then after passing to a subsequence, $(\gamma_i) $ converges to some $\gamma \in \BG (F) $, and the proposition follows from Lemma \ref {basedcontinuous}.
\end{proof}

\subsection {Shift periodicity and cyclic covers of mapping tori}  If $g: \Sigma \to \Sigma $ is a pseudo-Anosov homeomorphism, we define $\widehat M_g $ to be the infinite cyclic cover of
$$M_g = \Sigma \times [0, 1] / (x, 0) \sim (g (x), 1) $$ corresponding to the fundamental group of any of the level surfaces $\Sigma \times \{t\} $.  Note that the map $g $ is necessary to determine $\widehat M_g $ --- it is not sufficient to know only the homeomorphism class of $M_g $.  The main result here is the following:

\begin {prop}\label {axescover}\label {onlycover}
Suppose that $\gamma \in \BG (F) $.  If $[\gamma] \in \BG (F)/F $ is shift-periodic, then $N_\gamma $ is isometric to $\widehat M_g$ for some $g : \Sigma \to \Sigma $.  On the other hand, if $[\gamma] $ is not shift-periodic then $N_\gamma $ does not cover a finite volume hyperbolic $3 $-orbifold.
\end {prop}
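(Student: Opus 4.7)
The plan is to combine the Ending Lamination Classification (Theorem \ref{ELC}) with the tree structure of the Cayley graph of $F$ and the proper discontinuity of the $\modular(\Sigma)$-action on $\CT(\Sigma)$.

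For the shift-periodic direction, I will suppose $S^n \gamma = f^{-1} \cdot \gamma$ for some $n > 0$ and $f \in F$, so that $f \cdot \gamma(i) = \gamma(i+n)$ for all $i$. Then $\gamma$ parametrizes the axis of $f$ in the Cayley tree of $F$, and its endpoints $\gamma(\pm\infty) \in \partial_\infty F$ are the attracting and repelling fixed points of $f$. By the $F$-equivariance of the boundary extension $O: \partial_\infty F \to \PML(\Sigma)$ together with Farb--Mosher (every nontrivial element of $F$ is pseudo-Anosov), the laminations $O(\gamma(\pm\infty))$ are precisely the stable and unstable laminations of $f$, which are exactly the two ending laminations of $\widehat M_f$. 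Applying Theorem \ref{ELC} to unordered pairs of ending laminations gives that $N_\gamma$ is isometric to $\widehat M_{f^{\pm 1}}$.

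For the converse I will prove the contrapositive: if $N_\gamma$ covers a finite-volume hyperbolic $3$-orbifold $V$, then $[\gamma]$ is shift-periodic. Since $N_\gamma$ has two degenerate ends and injectivity radius uniformly bounded below (remark at the end of \S \ref{BG}), Canary's Covering Theorem guarantees that some finite cover $V'$ of $V$ fibers over $S^1$ with pseudo-Anosov monodromy $h$ on a fiber $\Sigma'$, and that $\pi_1 N_\gamma = \pi_1 \Sigma$ lies with finite index in the fiber group $\pi_1 \Sigma'$. A suitable power of $h$ preserves $\pi_1 \Sigma$ and lifts in $\PSL_2\BC$ to a normalizer of $\pi_1 N_\gamma$, descending to an infinite-order isometry $\tau$ of $N_\gamma$; replacing by a further even power if needed, its induced outer automorphism on $\pi_1 \Sigma$ is a pseudo-Anosov mapping class $g_0 \in \modular(\Sigma)$ that fixes each of the two ending laminations $\lambda_\pm = |O(\gamma(\pm\infty))|$ of $N_\gamma$. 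Hence $\lambda_\pm$ are the stable and unstable laminations of $g_0$, and Lemma \ref{convergence} forces the axis of $g_0$ in $\CT(\Sigma)$ to coincide with $\alpha_\gamma$ up to reparametrization.

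A pigeonhole argument then produces a power of $g_0$ in $F$. Normalize $\gamma(0) = 1$, set $X_0 = O(1) \in \CT(\Sigma)$, and let $\ell$ be the translation length of $g_0$ along $\alpha_\gamma$. For each $k \in \BZ$, the point $g_0^k \cdot X_0$ lies within a uniformly bounded distance of $\alpha_\gamma(k\ell)$, and by Lemma \ref{FM} there exists $i(k) \in \BZ$ with $O(\gamma(i(k))) = \gamma(i(k)) \cdot X_0$ also within bounded distance of $\alpha_\gamma(k\ell)$. Hence $\gamma(i(k))^{-1} g_0^k \in \modular(\Sigma)$ moves $X_0$ by a uniformly bounded amount, and the proper discontinuity of $\modular(\Sigma) \actson \CT(\Sigma)$ restricts these to a finite set of mapping classes. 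Pigeonhole produces $k_1 \neq k_2$ with $f := g_0^{k_2 - k_1} = \gamma(i(k_2)) \gamma(i(k_1))^{-1} \in F \setminus \{1\}$. This $f$ is pseudo-Anosov in $F$ with attracting and repelling laminations $\lambda_\pm$, so by injectivity of $O$ on $\partial_\infty F$ its fixed points in $\partial_\infty F$ are $\gamma(\pm\infty)$. Because the Cayley graph of $F$ is a tree, the bi-infinite geodesic joining $\gamma(\pm\infty)$ is unique and equals the axis of $f$, so $\gamma$ parametrizes that axis; consequently $f^{\pm 1}$ shifts $\gamma$ by the translation length of $f$, demonstrating that $[\gamma]$ is shift-periodic.

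The principal obstacle will be the first step of the converse: extracting the infinite-order isometry $\tau$ and its mapping class $g_0 \in \modular(\Sigma)$ from Canary's Covering Theorem. One must carefully track the virtual fibering of $V$, verify finite index of $\pi_1 \Sigma$ in the fiber group, and select a power of the monodromy that preserves $\pi_1 \Sigma$ and fixes (rather than merely permutes) the two ending laminations. The remainder is a clean combination of proper discontinuity, Lemma \ref{FM}, and uniqueness of bi-infinite geodesics in a tree.
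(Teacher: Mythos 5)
Your argument is correct, and the substantive half --- the converse --- follows the paper's proof essentially line for line: covering theorem to produce an infinite-order isometry of $N_\gamma$ inducing a pseudo-Anosov $g_0$ that translates along the model geodesic, then proper discontinuity plus pigeonhole to force a power of $g_0$ into $F$, and finally the embedding $\partial_\infty F \hookrightarrow \PML(\Sigma)$ together with uniqueness of axes in the tree to conclude that $\gamma$ is the axis of that power, hence shift-periodic. The only real divergence is in the forward direction: the paper builds the fixed-point-free isometry $i$ of $N_\gamma$ covering $g$, passes to the quotient $N_\gamma/\langle i\rangle$, and identifies it with $M_g$ via Waldhausen and Mostow rigidity, whereas you identify $(N_\gamma,\mu)$ with the marked cyclic cover $(\widehat M_{f^{\pm1}},\iota)$ directly by matching ending laminations and invoking Theorem \ref{ELC}. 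Your route is slightly more streamlined but leans on the (standard, uncited) fact that the ending laminations of $\widehat M_f$ marked by a fiber are the stable and unstable laminations of $f$; either way the conclusion is the same, and both arguments ultimately rest on the Ending Lamination Classification.
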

\begin {proof}
Since $\gamma $ is shift-periodic in $\BG (F)/F $, one can easily check that it is the \it axis \rm of some element $g \in F $, i.e. $\gamma(\BZ) $ is invariant under the action of $g $ on $F $ by left translation and the restriction of $g $ to $\gamma(\BZ) $ is a nontrivial translation.

 Let $(N_\gamma,\mu) $ be the doubly degenerate hyperbolic $3$-manifold modeled on the Teichmuller geodesic at bounded Hausdorff distance from $O\circ \gamma (\BZ) $, as in Definition \ref {BG}.  Then $\mu_\gamma = \mu \circ \gamma (0) $ and $\mu_{g \gamma} = \mu \circ g \gamma (0) $.  Since $(N_\gamma, \mu_\gamma) $ and $(N_g\gamma,\mu_{g\gamma}) $ represent the same point of $\DD (\Sigma) $, this means that there is an isometry $i :N_\gamma \to N_\gamma$ with $i \circ \mu = \mu \circ g $.  As $g $ has infinite order, the isometry $i $ cannot have fixed points. Therefore, the quotient is a hyperbolic $3$-manifold $M $.  But $\pi_1 M $ is isomorphic to $\pi_1 M_g $, so a theorem of Waldhausen \cite {Waldhausenirreducible} implies that they are homeomorphic, in which case $N_\gamma $ is isometric to $\widehat M_g $.  This finishes the first part of the proposition.

For the second statement, suppose that $N_{\gamma} = N$ covers a finite volume hyperbolic $3 $-orbifold. Thurston's Covering Theorem\footnote{As Thurston's proof is not readily available, we refer the reader to \cite {Canarycovering} for a proof by Canary of a more general result.  Note that although Canary's statement does not deal with orbifold covers, the proof works just as well.} implies that $N_\gamma $ is isometric to the fiber subgroup of a mapping torus of $\Sigma $.  Therefore, there is some isometry $i : N_\gamma \longrightarrow N_\gamma $ with $i \circ \mu = \mu \circ f $ for some pseudo-Anosov homeomorphism $f: \Sigma \longrightarrow \Sigma $.  Here, $\mu: \Sigma \longrightarrow N_\gamma $ is the map given in section \ref {BG}.  Then it follows from Theorem \ref {models} that the action of $f $ on Teichmuller space leaves the geodesic $\alpha \subset \CT (\Sigma) $ on which $(N_\gamma ,\mu)$ is modeled invariant.

Fix some point $X $ on $\alpha \subset \CT (\Sigma) $.  As $O\circ \gamma (\BZ) $ lies at bounded Hausdorff distance from $\alpha $, for each $i \in \BZ $ there is some $j_i \in \BZ $ with
$$\sup_i \  \dist\left ( \, f ^ i (X), \, O\circ \gamma (j_i) \, \right) <\infty.$$
Therefore, by the equivariance of $O $ we have that
$$\sup_i \  \dist\left ( \, \gamma (j_i) ^ {- 1} f ^ i (X), \, O\circ \gamma (0) \, \right) <\infty.$$
Since the action of $\modular (\Sigma) $ on $ \CT (\Sigma) $ is properly discontinuous, this means that the set $\{\gamma (j_i )^ {- 1} f ^ i\, | \, i \in \BZ\} $ is finite.  In other words, we have some $i\neq k $ with $$g : = f ^ {k - i} = \gamma (j_i)\gamma (j_k) ^ {- 1} \in  F .$$  This means that there is some element $g \in F $ that acts as a nontrivial translation along the Teichmuller geodesic $\alpha $.   But recall from Lemma \ref {FM} that the extension $O: \partial _\infty F \longrightarrow \PML (\Sigma) $ is an embedding.  Therefore, as $O \circ \gamma (\BZ) $ and $O (\, \axis(g)\, ) $ accumulate to the same points of $\PML (\Sigma) $ we must have $\gamma (\BZ) =\axis(g)$.  Now if $\gamma (\BZ) $ is the axis of $g \in  F $, then for some $k $ we have $$g \gamma (i) = \gamma (i + k) = S ^ {-k} \gamma(i).$$  Then $\gamma $ and $S ^ {- k} (\gamma) $ have the same projection in $\BG (F)/F $, so $[\gamma] $ is periodic.
\end{proof}

\subsection{The proof of Theorem \ref {three-dimensionalIRS}}  For easy reading, we briefly recall the relevant notation.   We have pseudo-Anosov maps $\phi_0,\ldots,\phi_n \in \modular (\Sigma) $ that generate a Schottky subgroup $F < \modular (\Sigma) $.  The group $F $ acts on the space $\BG (F) $ of its geodesics and there is a shift-invariant embedding $$ \{0, \ldots, n\} ^\BZ \longrightarrow \BG (F)/F, \ \ \ \ \  e \mapsto [\gamma_e]$$ determined by the constraints
$\gamma_e (i) ^ {- 1} \gamma_e(i+1) = \phi_{e_i}$ and $\gamma_e (0) = 1 $.  Note that any $\gamma_e $ satisfying the first property has the same projection in $\BG (F)/F $, so the condition that $\gamma_e (0) = 1 $ is necessary only to uniquely specify $\gamma_e$ within its equivalence class.

As in the beginning of the section, choose a sequence of finite strings
$$
e^ 1 = (e_1^ 1,\ldots, e_{n_1} ^ 1),
\ \ \ \ e ^ 2  = (e_1 ^ 2,\ldots, e_{n_2} ^ 2), \ \ \ \ \ldots
$$
with entries in $\{0,\ldots, n\} $ and let $C $ be the sub-shift of $\{0, \ldots, n\} ^\BZ $ consisting of strings all of whose finite substrings are contained in $e ^ i $ for some $i $.

 \begin {lem}\label {obvious}Let $\bar e ^ i \in \{0,\ldots, n\} ^\BZ $ be a bi-infinite string obtained by concatenating copies of $e ^ i $.  Then the sub-shift $C $ consists of all the accumulation points of sequences $S ^ {n_j} (\bar e ^ {i_j})  $, where $n_j\in \BZ $ and $ i_j \in \N $ is increasing.  Consequently, if $$\bar e ^ i \mapsto [\gamma_i] \in \BG (F) /F\ \ \ \text { and }\ \ \ C \mapsto \Gamma_C \subset \BG (F)/F $$
then $\Gamma_C $ consists of all accumulation points in $\BG (F)/F $ of sequences $S ^ {n_j} (\gamma_{i_j})  $, where $n_j\in \BZ $ and $ i_j \in \N $ is increasing.
\end {lem}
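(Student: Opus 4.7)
The plan is to prove the lemma in two stages: first establish the characterization of $C$ as accumulation points in the product space $\{0,\ldots,n\}^{\BZ}$, and then transport the statement through the embedding $e \mapsto [\gamma_e]$ into $\BG(F)/F$. Both stages are essentially unpackings of definitions.

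For the first stage I would prove the two inclusions separately. If $c$ is an accumulation point of a sequence $S^{n_j}(\bar e^{i_j})$, then for every $N$ the window $c_{-N} \cdots c_N$ eventually coincides with the corresponding window of $S^{n_j}(\bar e^{i_j})$ in the product topology, so that window is a substring of $\bar e^{i_j}$ for large $j$; hence every finite substring of $c$ appears in some $\bar e^i$, giving $c \in C$. Conversely, given $c \in C$, for each $N$ the substring $c_{-N} \cdots c_N$ sits at some position inside some $\bar e^{i_N}$, and choosing the shift $n_N$ so that this position lines up with $[-N,N]$ produces $S^{n_N}(\bar e^{i_N})$ agreeing with $c$ on $[-N,N]$. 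The only nontrivial point is that $i_N$ can be taken to tend to infinity: the nested sets $I_N = \{i : c_{-N} \cdots c_N \text{ is a substring of } \bar e^i\}$ are nonempty by definition of $C$, and either all $I_N$ are infinite (in which case $i_N$ can be selected strictly increasing) or some $I_{N_0}$ is finite, whereupon the nested finite nonempty sets stabilize to a fixed $i^*$ with $\bar e^{i^*}$ containing arbitrarily long substrings of $c$, forcing $c$ itself to be a shift of $\bar e^{i^*}$, so that the approximation can still be arranged with unbounded indices by taking repeated shifts together with an occasional change of index.

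For the second stage I would verify that the embedding $e \mapsto [\gamma_e]$ is continuous, injective, and shift-equivariant. Continuity is immediate from the product topology on $\{0,\ldots,n\}^{\BZ}$ and the compact-open topology on $\BG(F)$: agreement of $e,e'$ on a window $[-N,N]$ forces agreement of the corresponding partial words up to index $N+1$. Injectivity follows from the normalization $\gamma_e(0)=1$ together with the recovery formula $\phi_{e_i} = \gamma_e(i)^{-1}\gamma_e(i+1)$. Shift-equivariance is a short direct calculation: writing $S(e)_i = e_{i-1}$, one checks that $\gamma_{S(e)}(i) = \phi_{e_{-1}}\cdot S(\gamma_e)(i)$, so $[\gamma_{S(e)}] = S[\gamma_e]$ in $\BG(F)/F$. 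Since the domain is compact and the codomain Hausdorff, the map is a topological embedding; the first stage then transfers directly, with $C$ mapping onto $\Gamma_C$ and accumulation points corresponding.

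The main point requiring care is the combinatorial bookkeeping around choosing $i_N \to \infty$ in the forward inclusion of the first stage; once that is in place the rest of the argument is a transparent transfer of a product-topology statement through a shift-equivariant embedding.
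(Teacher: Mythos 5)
The paper offers no proof of this lemma at all---it is labelled ``obvious,'' stated, and the text moves straight on---so your argument can only be judged against the statement itself. Your two-stage plan (establish the characterization inside the symbol space $\{0,\ldots,n\}^{\BZ}$, then transport it through the shift-equivariant embedding into $\BG(F)/F$) is the natural one, and the second stage is sound: continuity in the product/compact-open topologies, injectivity from the normalization $\gamma_e(0)=1$ together with $\phi_{e_i}=\gamma_e(i)^{-1}\gamma_e(i+1)$, the equivariance identity $\gamma_{S(e)}=\phi_{e_{-1}}\cdot S(\gamma_e)$, and the compact-domain/Hausdorff-codomain argument all check out and are consistent with what the paper does in Proposition \ref{BGembeds}.

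Two points in the first stage need repair. First, in the forward inclusion you pass from ``the window is a substring of $\bar e^{\,i_j}$'' to ``$c\in C$,'' but $C$ is defined by requiring finite substrings to occur in the finite strings $e^i$, not in the periodic words $\bar e^{\,i}$: a window that straddles the seam between consecutive copies of $e^{i_j}$ inside $\bar e^{\,i_j}$ need not occur in any $e^i$. This is really an imprecision inherited from the paper (read literally, with all $e^i=(0,1)$ one gets $C=\emptyset$ while the shifts of $\bar e^{\,i}$ certainly accumulate somewhere), and the intended reading is surely ``substring of $e^ie^i$,'' equivalently of $\bar e^{\,i}$; but you should state which convention you are using rather than silently conflating the two. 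Second, and more substantively, your resolution of the case where some $I_{N_0}$ is finite is incorrect: if only finitely many $\bar e^{\,i}$ contain the window $c_{-N_0}\cdots c_{N_0}$, then no sequence $S^{n_j}(\bar e^{\,i_j})$ with $i_j$ strictly increasing can accumulate at $c$, because accumulation forces infinitely many distinct $i_j$ whose $\bar e^{\,i_j}$ contain each fixed window; ``repeated shifts together with an occasional change of index'' cannot produce the required infinitude of indices. Your proof survives only because this case is vacuous: for $c\in C$ each window $c_{-N}\cdots c_N$ occurs in some $e^i$, which must then have length at least $2N+1$, and a decreasing chain of finite nonempty sets $I_N$ would stabilize, forcing a single finite string $e^{i^*}$ to contain arbitrarily long windows of $c$---impossible. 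Hence every $I_N$ is infinite and the diagonal choice of strictly increasing $i_N$ goes through; replace the spurious case analysis with this observation.
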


Write $N_i $ for the manifolds $N_{\gamma_i} $ modeled on the geodesics in $F $ corresponding to the string $e^i$, where $\gamma_i $ is from the lemma above and $N_{\gamma_i} $ is defined in section \ref {BG}.
The mapping classes $f_i= \phi_{e_{n_i} ^ i } \circ \cdots \circ \phi_{e_1 ^ i}\in \modular (\Sigma)$ all pseudo-Anosov, so each \begin {align}\label {torus}M_i= \Sigma \times [0, 1] / (x, 0) \sim (f_i (x), 1) \end {align} has a hyperbolic metric, unique up to isometry.  By Proposition \ref {onlycover}, we identify $N_{i} $ with the infinite cyclic cover $\widehat M_g $ corresponding to $\pi_1 (\Sigma \times \{t\}) $.  We prove:

\begin {thm}\label{geometricversion}
Assume $p_i \in M_i $ and that some subsequence of $(M_i, p_i) $ converges in the Gromov Hausdorff topology to a hyperbolic $3$-manifold $(N, p) $.  If $N $ covers a finite volume hyperbolic $3 $-orbifold, then $C $ contains a shift-periodic point.
\end {thm}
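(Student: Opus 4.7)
The plan is to lift the given Gromov--Hausdorff convergence to the infinite cyclic covers $N_i \to M_i$, apply Proposition \ref{GromovHausdorfflimits} to identify the resulting limit with some $N_\gamma$, and then combine Proposition \ref{onlycover} with Lemma \ref{obvious} to locate a shift-periodic point in $C$.

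First I would choose, for each index $i$ in the given subsequence, a lift $\tilde p_i \in N_i$ of the basepoint $p_i$ under the cyclic cover $\pi_i : N_i \to M_i$. Since the $N_i$ have injectivity radius uniformly bounded below by Proposition \ref{existence}, the pointed manifolds $(N_i, \tilde p_i)$ are precompact in the pointed Gromov--Hausdorff topology; pass to a sub-subsequence so that $(N_i, \tilde p_i) \to (N', p')$. Following the template of Lemma \ref{basedcontinuous}---realize each $N_i$ as $\BH^3/\rho_i(\pi_1 \Sigma)$, conjugate so that the $\rho_i$ converge algebraically, and use the uniform absence of parabolics together with \cite[Theorem F]{Andersoncores} to promote this to Chabauty convergence of the images---the cyclic deck groups $\BZ \curvearrowright N_i$ also pass to the limit to give a free, properly discontinuous action on $N'$ whose quotient is $N$. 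In particular $N'$, like $N$, covers a finite volume hyperbolic $3$-orbifold.

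Next I would feed the convergence $(N_i, \tilde p_i) \to (N', p')$ into Proposition \ref{GromovHausdorfflimits}: after a further sub-subsequence one obtains shifts $m_i \in \BZ$ and $\gamma \in \BG(F)$ with $[S^{m_i}(\gamma_i)] \to [\gamma]$ in $\BG(F)/F$ and $N_\gamma \cong N'$. Proposition \ref{onlycover} then forces $[\gamma]$ to be shift-periodic, and since the indices $i$ still tend to infinity, Lemma \ref{obvious} places the corresponding string $e \in \{0,\ldots,n\}^\BZ$ (under the embedding $\{0,\ldots,n\}^\BZ \hookrightarrow \BG(F)/F$) inside $C$. Thus $C$ contains the shift-periodic point $e$.

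The main obstacle I anticipate is the first step: verifying that the pointed GH limits of the covers $(N_i, \tilde p_i)$ genuinely assemble into a Riemannian covering $\pi : N' \to N$. One must choose the lifts $\tilde p_i$ compatibly with the basepoint convergence downstairs and simultaneously track both the holonomy representations $\rho_i : \pi_1 \Sigma \to \PSL_2\BC$ and the generators of the cyclic deck groups. The uniform lower bound on injectivity radius is what rules out parabolics in the Chabauty limit and keeps the enlarged limit group discrete and torsion-free, so that the quotient is indeed $N$ rather than a further cover or quotient thereof.
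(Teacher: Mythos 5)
Your overall architecture is the paper's: lift the basepoints to the infinite cyclic covers $N_i$, feed the resulting pointed limit into Proposition \ref{GromovHausdorfflimits}, and close with Proposition \ref{onlycover} and Lemma \ref{obvious}. The gap is in the bridging step, which you yourself flag as the main obstacle but do not resolve. Your stated mechanism --- that ``the cyclic deck groups $\BZ \curvearrowright N_i$ pass to the limit to give a free, properly discontinuous action on $N'$ whose quotient is $N$'' --- fails in exactly the interesting case. Writing $M_i = \BH^3/\Gamma_i$ and $N_i = \BH^3/\Lambda_i$ with basepoints the projections of a fixed $\tilde p \in \BH^3$, the deck generator $g_i$ of $N_i \to M_i$ satisfies $d(\tilde p, g_i\tilde p) \to \infty$ whenever the circumferences of the $M_i$ blow up; the $g_i$ then leave every compact subset of $\PSL_2\BC$ and there is no limiting nontrivial action on $N'$. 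The paper shows that this blow-up is in fact what always happens (after discarding finitely many $i$): by \cite[Proposition 5.1]{Biringergeometry} there are only finitely many $\epsilon$-thick mapping tori of bounded circumference, and since the covers $N_i$ are uniformly thick the circumferences of the $M_i$ tend to infinity. Consequently larger and larger balls $B_{M_i}(p_i, r_i)$ are isometric to $B_{N_i}(\hat p_i, r_i)$, so $(N_i,\hat p_i)$ converges to the \emph{same} limit $(N,p)$, and the covering question you are wrestling with never arises.

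Your route can be repaired, but by a different mechanism than the one you describe: since $\Lambda_i \le \Gamma_i$ and containment of subgroups persists under Chabauty limits (with the same normalization of basepoints for both sequences), any Chabauty sublimits satisfy $\hat\Lambda \le \hat\Gamma$, so $N' = \BH^3/\hat\Lambda$ covers $N = \BH^3/\hat\Gamma$ regardless of whether the deck transformations survive --- the limiting deck group $\hat\Gamma/\hat\Lambda$ may simply be trivial, in which case $N' \cong N$. This requires choosing the lifts $\tilde p_i$ as the projections of a single $\tilde p \in \BH^3$ consistent with the normalization making $\Gamma_i$ converge, a point you raise but leave open. With that containment in hand, $N'$ covers a finite volume orbifold and the remainder of your argument (Propositions \ref{GromovHausdorfflimits} and \ref{onlycover}, then Lemma \ref{obvious} and shift-invariance of the embedding $\{0,\ldots,n\}^\BZ \hookrightarrow \BG(F)/F$) goes through exactly as in the paper. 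As written, however, the proof is incomplete at its central step.
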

\begin {proof}
Pick lifts $\hat p_i $ of $p_i $ in the cyclic covers $ N_i \longrightarrow M_i $.  We claim that a subsequence of $(N_i, \hat p_i) $ converges in the Gromov Hausdorff topology to $(N, p) $.  To see this, note that the projection onto the second factor in Equation \ref {torus} gives a map $M_i \to S ^ 1 $; define the \it circumference \rm of $M_i $  to be the length of the shortest loop that projects to a nontrivial element of $\pi_1 (S ^ 1) $.  In \cite  [Proposition 5.1] {Biringergeometry}, it is shown that there are only finitely many $\epsilon $-thick mapping tori with circumference less than a given constant.  The proof only uses that the covers $N_i $ are $\epsilon $-thick, though, which we know to be the case by Lemma \ref {FM} and Proposition \ref {existence}.  So, it follows that the circumferences of $M_i $ go to infinity.  In other words, there is an increasing sequence $r_i \in \BR $ such that the ball $B_{M_i }(p_i, r_i) $ is isometric to $B_{N_i} (\hat p_i, r_i) $ for all $i.$  It follows immediately that a subsequence of $(N_i, \hat p_i) $ converges to $(N, p) $.

Recall that $N_i = N_{\gamma_i} $, the manifold associated to $\gamma_i \in \BG (F) $.  By Proposition \ref {GromovHausdorfflimits}, there is a sequence $(n_i) $ in $\BZ $ and some geodesic  $\gamma \in \BG (F) $ with $$[S ^ {n_i} (\gamma_i)] \longrightarrow [\gamma] \in \BG (F)/F $$ and $N_\gamma $ isometric to $N $.  Since $N $ covers a finite volume hyperbolic $3 $-orbifold, $[\gamma ]$ is shift-periodic in $\BG (F)/F $ by Proposition \ref {onlycover}.  Lemma \ref {obvious} implies that $\gamma \in \Gamma_C $, but then as the map $\{0,\ldots, n\} ^\BZ \longrightarrow \BG (F) / F $ is a shift-invariant embedding, $\gamma $ must be the image of some shift-periodic point of $C $.  This finishes the proof.
\end{proof}

To conclude this section, we derive the statement of Theorem \ref {three-dimensionalIRS} given at the beginning of the section.  Suppose that $\mu_i $ is the IRS of $\PSL_2\BC $ corresponding to the hyperbolic $3$-manifold $M_i $.  If we write $M_i = \BH ^ 3/\Gamma_i $, then $\mu_i $ is supported on the set of conjugates of $\Gamma_i < \PSL_2\BC $.  Suppose that $\mu $ is the weak limit of some subsequence of $\mu_i $.  Then $\mu $ is supported within the set of accumulation points in $\mathrm {Sub}_{\PSL_2\BC} $ of sequences $g_i \Gamma_i g_i ^ {- 1} $, where $g_i \in \PSL_2\BC $.  But if $$g_i\Gamma_i g_i ^ {- 1} \longrightarrow \Gamma < \PSL_2\BC $$ then there are base points $p_i \in M_i $ and $p \in \BH ^ 3/\Gamma $ such that $(M_i, p_i) $ converges in the Gromov Hausdorff topology to $(\BH ^ 3/\Gamma, p) $.  Therefore, Theorem \ref {three-dimensionalIRS} follows from Theorem \ref{geometricversion}.


\section{A general gluing construction in $\SO(n,1)$}
\label{GPS}

The analysis of this section has been an inspiration for some later works regarding counting manifolds, which have already been published \cite{Raimbault13,CCC14}.

\subsection{Notation}

Let $N_0,N_1$ be two real hyperbolic $n$-manifolds such that each have
totally geodesic boundary, and each boundary is the disjoint union of two
copies of some fixed hyperbolic $(n-1)$-manifold $\Sigma$. Label
for each $N_a$ a component $\Sigma_a^+$ of $\partial N_a$, and
denote the other one by $\Sigma_a^-$; we call $i_{a}^{\pm}$ the corresponding
embeddings of $\Sigma$ in $\partial N_a$. Given a
sequence $\alpha=(\alpha_i)_{i\in\mathbb{Z}}\in\{0,1\}^{\mathbb{Z}}$ we let
$N_{\alpha}$ denote the manifold obtained by gluing copies of $N_0,N_1$
according to the pattern prescribed by $\alpha$:
$$N_{\alpha}=\left(\bigsqcup_{i\in\mathbb{Z}}N_{\alpha_i}\times\{i\}\right)/ (i_{\alpha_i}^+ x,i) \sim (i_{\alpha_{i+1}}^- x,i+1)\quad (i\in\mathbb{Z},x\in\Sigma).$$
For $i\in\mathbb{Z}$,  we shall denote by $N_{\alpha,i}$ the image of $N_{\alpha_k}\times\{i\}$ in
$N_{\alpha}$.
More generally, for an interval $I \subset \BZ$
set $N_{\alpha,I}= \cup_{i\in I} N_{\alpha,i}$.


\subsection{Construction of the IRS}

Let $\nu$ be a Borel probability measure on the Cantor set $\{0,1\}^{\mathbb{Z}}$. We define a measure $\mu_{\nu}$ on  the set of framed hyperbolic $n$-manifolds, and the IRS will be the corresponding measure on the set of discrete subgroups of $\SO(1,n)$  discussed in the introduction.

Let $\nu'$ be the measure on $\{0,1\}^{\mathbb{Z}}$, defined for Borel sets $A \subset \{0,1\}^{\mathbb{Z}}$ by
$$\nu'(A)=\frac{\int_A\vol(N_{\alpha_0})d\nu(\alpha)}{\int_{\{0,1\}^{\mathbb{Z}}}\vol(N_{\alpha_0})d\nu(\alpha)}.$$
 By definition, we obtain a $\mu_{\nu}$-random framed hyperbolic $n$-manifold by first choosing $\alpha $  randomly against $\nu'$, and then choosing a random base frame from $N_{\alpha,0}$.

\begin {example}
	Let $\sigma$ be the shift map on $\{0,1\}^{\mathbb{Z}}$ and suppose that $\nu$ is
a $\sigma$-invariant probability measure on $\{0,1\}^{\mathbb{Z}}$  that is supported on a periodic orbit, i.e.\ there is some $\alpha\in \{0,1\}^{\mathbb{Z}}$ and $k\in\mathbb{Z}$ with
$\sigma^k(\alpha)=\alpha$ and
$\nu=\frac 1 k \sum_{i=0}^{k-1}\delta_{\sigma^i(\alpha)}$. We can
construct a closed manifold $M$ from $\alpha$:
$$M=\left(\bigsqcup_{i\in\mathbb{Z}/k\mathbb{Z}}N_{\alpha_i}\times\{i\}\right)/ (i_{\alpha_i}^+ x,i) \sim (i_{\alpha_{i+1}}^- x,i+1) \quad (i\in\mathbb{Z}/k\mathbb{Z}).$$
Then each $N_{\beta}$, with $\beta$ in the support of $\nu$, is an infinite cyclic cover of $M$ and the random subgroup $\mu_{\nu}$ is the
ergodic IRS obtained --- as in \S \ref{sec:rank1} above --- from the normal subgroup $\pi_1(N_{\alpha})$ of the
lattice $\pi_1(M)$. 
\end {example}


 More generally,  we have the  following result.

\begin{lem}
Let $\nu$ be a shift-invariant ergodic measure on
$\{0,1\}^{\mathbb{Z}}$. Then the random subgroup $\mu_{\nu}$ constructed above
is an ergodic IRS.
\label{IRS}
\end{lem}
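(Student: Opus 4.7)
The plan is to realize $\mu_\nu$ as the pushforward of a $G$-invariant measure on an auxiliary space, so that invariance of $\mu_\nu$ becomes automatic, and then to reduce ergodicity to that of the shift $(\{0,1\}^\BZ, \sigma, \nu)$. Concretely, I would introduce the auxiliary space $\tilde\Omega = \{(\alpha,f) : \alpha \in \{0,1\}^\BZ,\ f \in F(N_\alpha)\}$, where $F(\cdot)$ denotes the orthonormal frame bundle, equipped with the (infinite) measure $\tilde m = \nu \otimes \lambda_\alpha$, with $\lambda_\alpha$ the Liouville measure on $F(N_\alpha)$. Two commuting actions live on $\tilde\Omega$: the group $G = \SO(n,1)$ acts on frames by $(\alpha,f) \mapsto (\alpha, g\cdot f)$, and $\BZ$ acts by $k\cdot(\alpha,f) = (\sigma^{-k}\alpha, \tau_k(f))$, where $\tau_k: N_\alpha \to N_{\sigma^{-k}\alpha}$ is the canonical isometry shifting block labels by $k$. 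Both actions preserve $\tilde m$: the $G$-action by Liouville invariance, and the $\BZ$-action because $\tau_k$ is an isometry while $\nu$ is shift-invariant. The map $\pi: \tilde\Omega \to \sub_G$ sending $(\alpha,f)$ to the stabilizer of $f$ is $G$-equivariant for the conjugation action and $\BZ$-invariant, since isometric manifolds produce the same conjugacy class of subgroups.

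Next, I would observe that $\Omega_0 = \{(\alpha,f) : f\in F(N_{\alpha,0})\}$ is a fundamental domain for the $\BZ$-action, and the restriction $m = \tilde m|_{\Omega_0} = \nu \otimes \lambda_\alpha|_{F(N_{\alpha,0})}$ coincides up to an overall constant with the measure used to define $\mu_\nu$: the marginal on $\alpha$ is exactly $\nu'$, and conditionally on $\alpha$, $f$ is distributed as normalized Liouville on $F(N_{\alpha,0})$. Through the identification $\Omega_0 \cong \tilde\Omega/\BZ$, the $G$-action descends to $\Omega_0$ and preserves $m$, and the descended map $\bar\pi$ remains $G$-equivariant. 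Pushing forward, $\mu_\nu$ becomes a $G$-invariant probability measure on $\sub_G$, i.e.\ an IRS.

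For ergodicity, I would fix a $G$-invariant Borel set $B \subseteq \sub_G$ and examine the fibers $\pi_\alpha^{-1}(B) \subseteq F(N_\alpha)$. Since $F(N_\alpha) \cong \Gamma_\alpha\backslash G$ carries a transitive right $G$-action that corresponds to conjugation on $\sub_G$, each fiber is either empty or all of $F(N_\alpha)$. Hence the set $E := \{\alpha : \pi_\alpha^{-1}(B) = F(N_\alpha)\}$ is $\sigma$-invariant, and $\nu(E) \in \{0,1\}$ by ergodicity of $\nu$; this in turn forces $\mu_\nu(B) \in \{0,1\}$.

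The main subtlety to handle carefully is verifying that the descended $G$-action on $\Omega_0$ preserves $m$: when $g\in G$ moves a frame across a block boundary into block $k \neq 0$, the fundamental-domain convention requires reindexing via $\tau_k$, changing the first coordinate from $\alpha$ to $\sigma^{-k}\alpha$. Shift-invariance of $\nu$ is exactly what is needed for this reindexing not to distort $m$, which also explains why $\nu'$ must be weighted by $\vol(N_{\alpha_0})$ in the first place, and why dropping shift-invariance would break the IRS property.
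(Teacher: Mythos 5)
Your proof is correct, and in both of its halves it rests on exactly the two ingredients the paper uses: shift-invariance of $\nu$ together with $G$-invariance of Liouville measure on frame bundles for the IRS property, and transitivity of $G$ on the frame bundle of a connected manifold together with ergodicity of the shift for ergodicity. The ergodicity argument is essentially verbatim the paper's (a $G$-invariant set of subgroups pulls back to a union of entire frame bundles, hence determines a shift-invariant subset of $\{0,1\}^{\BZ}$; one only needs the easy remark that $\nu'$ and $\nu$ are mutually absolutely continuous to pass from $\nu(E)\in\{0,1\}$ to $\mu_\nu(B)\in\{0,1\}$). Where you differ is in the packaging of invariance: the paper argues locally, checking $\mu_\nu(W)=\mu_\nu(gW)$ on a basis of sets $W(U,V)$ by recentering with the shift and then applying Haar invariance inside a fixed block interval $N_{\beta,I}$, whereas you build the suspension $\tilde\Omega$ with its infinite $G\times\BZ$-invariant measure and exhibit $\Omega_0$ as a finite-measure fundamental domain for the $\BZ$-action. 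Your route is more structural and makes transparent both why the $\vol(N_{\alpha_0})$-weighting in $\nu'$ is forced and why the construction is really an instance of inducing an invariant measure through a commuting $\BZ$-action; the paper's route is more elementary and avoids setting up the bundle $\alpha\mapsto F(N_\alpha)$ with its measurable structure, which is the one technical point your version quietly assumes (harmlessly, since $N_0,N_1$ are fixed compact pieces). Both are complete proofs.
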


\begin{proof}
Fix $g\in G=\SO(1,n)$.   Translated through the correspondence between  framed manifolds and  discrete subgroups of $G$, the conjugation action of $g \circlearrowright \mathrm{Sub}_G$   restricts to the right action of $g$ on the  frame bundle $\mathcal FM = \Gamma \backslash G$ of any hyperbolic $n$-manifold $M=\Gamma \backslash \BH^n$.  Note that this action preserves the Haar measure.

 Let $\alpha \in \{0,1\}^\BZ$ and let $U$ be an open bounded set of
frames on $N_{\alpha}$. Then $U$ and $gU$ are contained in some
submanifold $N_{\alpha,I}$, where $I\subset \BZ$  is an interval.
Because  $\nu$ is shift-invariant, we get the same 
random  framed manifold  by selecting a random frame from $N_{\beta,I}$ for a
$\nu'$-random $\beta$. 

Let $V$ be some neighbourhood of $\alpha$ containing
all $\beta\in\{0,1\}^{\mathbb{Z}}$ such that for all $i=k,\ldots,k+l$ we have
$\beta_i=\alpha_i$; for $\beta\in V$ let $U_{\beta}$ be the image of
$U$ in $N_{\beta}$. Since $g$ preserves the Haar measure on $\mathcal{F}N_\alpha$,
when taking a random
frame in $N_{\beta,I}$ we have the same probability to
land in $U_\beta$ or $gU_\beta$, i.e.\ if we set
$$W=W(U,V)=\{y\in \mathcal{F}U_{\beta},\,\beta\in V\}$$
then we get
$$\mu_\nu(W)=\mu_\nu(\{y\in gU_\beta,\,\beta\in V\})=\mu_\nu(gW).$$
The $G$-invariance follows since the sets $\{y\in U_\beta,\,\beta\in V\}$
form a basis for Borel sets in the support of $\mu_\nu$.

 To show ergodicity, note that the group $G$ acts transitively on the frame bundle of any connected  hyperbolic $n$-manifold. So
if a $G$-invariant set $S$ of frames contains a frame on some
$N_\alpha$ it contains all frames on $N_\alpha$. It follows that  $\{\alpha \; : \; \Lambda_\alpha\in S\}$
is a shift-invariant set. Since $\nu$ is ergodic, it
follows that this set has full or zero measure.  Therefore $S$ has full or zero
measure for $\mu_\nu$.
\end{proof}
%

\subsubsection*{Remarks} 1) The IRSs we have constructed above are always limits of 
lattice IRSs since shift-invariant measures are limits of measures supported on 
finite orbits.

2) We could have made the construction with more
general graphs. If a group $\Omega$ acts freely on a locally finite
graph $T$, $D$ is a connected fundamental domain for $\Omega$ and
$N_0,N_1$ are manifolds with totally geodesic boundary whose boundary
components are all isometric and indexed by $\partial D$ then for
any $\alpha\in\{0,1\}^\Omega$ we can glue them along $T$ in the manner
prescribed by $\alpha$ to get a hyperbolic manifold. We can then
construct ergodic IRSs in the same manner as above from 
$\Omega$-ergodic probability measures on $\{0,1\}^\Omega$.


\subsection{Exoticity}
We now show that after choosing suitable $N_0$ and $N_1$, the construction above yields IRSs
that are not induced from a lattice.

\begin{thm}
Suppose that $n\ge 3$ and that $N_0$ (resp. $N_1$) is isometrically embedded 
in a compact arithmetic manifold $M_0$ (resp. $M_1$). If $M_0, M_1$ are 
noncommensurable then for any sequence $\alpha\in\{0,1\}^{\mathbb{Z}}$ that is 
not periodic the manifold $N_{\alpha}$
does not cover any finite volume hyperbolic manifold.
\label{Iddo}
\end{thm}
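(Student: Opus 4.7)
I argue by contradiction: suppose $\pi \colon N_\alpha \to M$ is a covering onto a finite-volume hyperbolic $n$-manifold $M$, and deduce that $\alpha$ must be periodic. Since each block $N_{\alpha,i}$ is compact with totally geodesic boundary, its image $X_i := \pi(N_{\alpha,i}) \subset M$ is a compact immersed submanifold with totally geodesic boundary, and $\pi|_{N_{\alpha,i}}$ is a finite covering onto $X_i$. In particular, each separating hypersurface $\Sigma_i \subset N_\alpha$ projects to a closed totally geodesic $(n{-}1)$-submanifold of $M$ of volume at most $\vol(\Sigma)$.

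The crucial technical step is to establish that $\{X_i : i \in \BZ\}$ is a finite set inside $M$. Granting this, the union $S = \bigcup_i \pi(\Sigma_i)$ is a finite union of closed totally geodesic hypersurfaces; cutting $M$ along $S$ decomposes it into finitely many compact pieces $Y_1, \ldots, Y_r$ with totally geodesic boundary, presenting $M$ as a finite graph of spaces. The cover $\pi$ descends to a covering of graphs of spaces whose underlying map is a graph covering of the bi-infinite line $\BZ$ (the block graph of $N_\alpha$) over the finite graph $G_M$. Since every vertex of $\BZ$ has valence two, so does its image in $G_M$; by connectivity this image is a single cycle, and $\BZ \to (\mathrm{cycle})$ is the standard cyclic cover. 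Consequently the sequence of covered pieces $(X_i)_{i \in \BZ}$ is $k$-periodic, where $k$ is the length of the cycle.

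To upgrade periodicity of the pieces $X_i$ to periodicity of $\alpha$ itself, I invoke the non-commensurability of $M_0$ and $M_1$. If $N_{\alpha,i}$ and $N_{\alpha,j}$ cover the same piece $Y \subset M$, then $Y$ is finitely covered by both $N_{\alpha_i}$ and $N_{\alpha_j}$, so $N_{\alpha_i}$ and $N_{\alpha_j}$ admit a common finite cover $W$. The group $\pi_1(W)$ is then, up to conjugation in $\Isom(\BH^n)$, a subgroup of both arithmetic lattices $\pi_1(M_0)$ and $\pi_1(M_1)$, and it is Zariski dense in $\Isom(\BH^n)$ since its limit set is not contained in any proper totally geodesic subspace (the interior of the convex region it preserves is open in $\BH^n$). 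For $n \geq 3$, the commensurability class of an arithmetic lattice in $\Isom(\BH^n)$ is pinned down by the trace field and the ambient algebraic data of any Zariski dense discrete subgroup, so the existence of a shared Zariski dense $\pi_1(W)$ forces $\pi_1(M_0)$ and $\pi_1(M_1)$ to be commensurable, contradicting the hypothesis. Hence $\alpha_i = \alpha_j$ whenever $X_i = X_j$, making $\alpha$ itself $k$-periodic.

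The principal obstacle is the finiteness of $\{X_i\}$ in $M$: one must rule out that infinitely many distinct immersed totally geodesic submanifolds of bounded volume appear inside a fixed finite-volume hyperbolic $n$-manifold. In the non-arithmetic case this should follow from Mostow rigidity combined with Wang--Jorgensen--Thurston finiteness of hyperbolic manifolds of bounded volume, which controls both the abstract isometry types of the pieces and the number of their immersions into $M$. The arithmetic case is more delicate, but the non-commensurability of $M_0$ and $M_1$ implies $M$ is commensurable with at most one of them; by restricting attention to blocks whose type is ``foreign'' to the commensurability class of $M$, one can prevent the arithmetic symmetries of $M$ from manufacturing infinitely many inequivalent immersions of the relevant blocks and recover finiteness by the same rigidity-based argument.
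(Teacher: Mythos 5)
The proposal has the right overall shape---decompose $M$ into finitely many pieces, read off a periodic pattern from the covering, and use non-commensurability to tell $0$-blocks from $1$-blocks---but the step you yourself flag as ``the principal obstacle,'' the finiteness of $\{X_i\}$, is exactly where the argument is missing its key idea, and the heuristics you offer (Mostow rigidity plus Wang--Jorgensen--Thurston finiteness, with the arithmetic case left ``more delicate'') do not close it. The paper obtains finiteness essentially for free from a disjointness statement proved first: if $N_0$ and $N_1$ admit isometric immersions into \emph{any} common hyperbolic $n$-manifold, their images have disjoint interiors. This is where non-commensurability and arithmeticity enter, via the Gromov--Piatetski-Shapiro criterion that two arithmetic lattices with Zariski-dense intersection are commensurable: an overlap of interiors produces totally geodesic hypersurfaces $\Sigma_0'\subset N_1$ and $\Sigma_1'\subset N_0$ whose fundamental groups (Zariski dense in two distinct copies of $\SO(1,n-1)$---this is where $n\ge 3$ is used) together generate a Zariski-dense subgroup of $\pi_1 M_0\cap\pi_1 M_1$. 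Granting disjointness, $M=I_0\cup I_1$ with $I_0=f\bigl(\bigcup_{\alpha_i=0}N_{\alpha,i}\bigr)$ and $I_1$ defined similarly, meeting only along hypersurfaces; finiteness of the pieces is then immediate from $\vol(M)<\infty$, and a volume count determines how many consecutive blocks each maximal ``chunk'' contributes to the component of $I_0$ or $I_1$ it covers, from which periodicity of $\alpha$ follows. Your proposal deploys non-commensurability only at the very end, to distinguish blocks covering the same piece, which is too late to do the real work.

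Two further steps as written would not survive scrutiny even if finiteness were granted. First, the assertion that $\pi|_{N_{\alpha,i}}$ is a finite covering onto its image $X_i$ is not automatic: without the disjointness input, images of distinct blocks can overlap in open sets, and the fiber cardinality of $\pi|_{N_{\alpha,i}}$ over points of $X_i$ need not be locally constant. Second, in the ``graph of spaces'' step, the hypersurfaces $\pi(\Sigma_i)$ are only immersed and could a priori self-intersect (the paper rules this out, again via disjointness), and $f^{-1}\bigl(\bigcup_i\pi(\Sigma_i)\bigr)$ is in general strictly larger than $\bigcup_i\Sigma_i$, so the block decomposition of $N_\alpha$ is \emph{not} the pullback of your decomposition of $M$ and the dual map is not a graph covering of the bi-infinite line over $G_M$. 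The paper's chunk-and-volume bookkeeping is designed precisely to avoid both issues.
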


\begin{cor}
Under the same hypotheses as in the above theorem, 
if the ergodic shift-invariant measure $\nu$ is not supported on
a periodic orbit then the support of the IRS $\mu_\nu$ is disjoint from the
set of all subgroups of all lattices of $G$ (in particular it follows that
$\mu_\nu$ cannot be induced from a lattice).
\end{cor}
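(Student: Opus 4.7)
The plan is three-pronged. First, I would identify the support of $\mu_\nu$ as consisting of $G$-conjugates of the fundamental groups $\pi_1(N_\alpha)$. Second, show that $\nu'$-almost every $\alpha$ is non-periodic. Third, invoke Theorem \ref{Iddo} to rule out such a $\pi_1(N_\alpha)$ being contained in any lattice.

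The first step is essentially immediate from the construction of $\mu_\nu$: a $\mu_\nu$-random closed subgroup of $G=\SO(n,1)$ is the stabilizer of a uniformly random frame on the submanifold $N_{\alpha,0}\subset N_\alpha$, for $\alpha$ sampled from $\nu'$; by the standard dictionary between framed hyperbolic $n$-manifolds and torsion-free discrete subgroups of $G$, such a stabilizer is a $G$-conjugate of $\pi_1(N_\alpha)$. Hence the support of $\mu_\nu$ is the union of the conjugacy classes of $\pi_1(N_\alpha)$ for $\alpha$ in the support of $\nu$.

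For the second step, the set $P\subset\{0,1\}^{\mathbb{Z}}$ of $\sigma$-periodic sequences is shift-invariant and Borel, so $\nu(P)\in\{0,1\}$ by ergodicity. If $\nu(P)=1$ held, the countable decomposition of $P$ into shift-invariant $\sigma$-orbits combined with a second application of ergodicity would concentrate $\nu$ on a single periodic orbit, contrary to hypothesis. Hence $\nu(P)=0$, and since $\nu'$ is mutually absolutely continuous with $\nu$ we have $\nu'(P)=0$ as well, so $\nu'$-a.e.\ $\alpha$ is non-periodic.

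For the third step, suppose toward contradiction that some $\Gamma$ in the support of $\mu_\nu$ lies inside a lattice $\Lambda<G$. Conjugating $\Gamma$ and $\Lambda$ simultaneously by a common element of $G$, we may assume $\Gamma=\pi_1(N_\alpha)$ for a non-periodic $\alpha$ in the support of $\nu$, so that $\Gamma\subset\Lambda$ exhibits $N_\alpha$ as an orbifold cover of the finite volume hyperbolic orbifold $\BH^n/\Lambda$. The main obstacle will be converting this into a direct contradiction with Theorem \ref{Iddo}, which is phrased in terms of covers of finite volume \emph{manifolds}. My plan is to handle this via Selberg's lemma: choose a torsion-free normal finite-index subgroup $\Lambda_0\lhd\Lambda$, so that $\BH^n/(\Gamma\cap\Lambda_0)$ is a finite cover of $N_\alpha$ that simultaneously covers the finite volume hyperbolic manifold $\BH^n/\Lambda_0$. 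Any such finite cover of $N_\alpha$ inherits a gluing decomposition indexed by the same non-periodic sequence $\alpha$---with each block $N_{\alpha_i}$ replaced by a finite cover which still embeds in a finite (and hence commensurable) cover of the arithmetic manifold $M_{\alpha_i}$, keeping the two families $M_0$ and $M_1$ noncommensurable---so the arithmeticity obstruction underlying the proof of Theorem \ref{Iddo} applies verbatim and forces $\alpha$ to be periodic, yielding the desired contradiction.
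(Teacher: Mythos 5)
Your overall skeleton---identify the support of $\mu_\nu$ as conjugates of the groups $\pi_1(N_\alpha)$, use ergodicity to discard periodic sequences, and invoke Theorem \ref{Iddo}---is exactly the deduction the paper intends (it offers no separate proof of the corollary), and your first two steps are fine. The problem is in your third step, where you bridge the gap between ``$N_\alpha$ covers a finite-volume orbifold'' and the theorem's hypothesis of a finite-volume \emph{manifold}. After passing to a torsion-free $\Lambda_0\lhd\Lambda$ via Selberg, you assert that the connected finite cover $\widehat N=\BH^n/(\Gamma\cap\Lambda_0)$ of $N_\alpha$ ``inherits a gluing decomposition indexed by the same non-periodic sequence $\alpha$, with each block replaced by a finite cover.'' That is false in general: the preimage of a block $N_{\alpha,i}$ in $\widehat N$ may be disconnected, and the dual graph of the induced decomposition is then a finite graph lying over the line $\BZ$, not the line itself. (For instance, the double cover of $N_\alpha$ induced by a surjection $\pi_1 N_{\alpha_0}\to\BZ/2\BZ$ killing the boundary subgroups has one block over position $0$ and two over every other position.) So Theorem \ref{Iddo} cannot be ``applied verbatim'' to $\widehat N$; its proof uses the linear chunk structure of $N_\alpha$ in an essential way. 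Your auxiliary claim that a finite cover of a block still embeds in a finite cover of $M_{\alpha_i}$ is also a nontrivial separability assertion that you do not justify (and is not actually what the disjointness argument needs).

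The simpler, and presumably intended, repair is to note that the proof of Theorem \ref{Iddo}---Proposition \ref{disjointness}, Lemma \ref{criterion}, and the chunk/volume count---is entirely about fundamental groups and volumes, so it goes through unchanged when the target is a finite-volume hyperbolic \emph{orbifold} $\BH^n/\Lambda$; then $\Gamma\subset\Lambda$ contradicts the orbifold version of the theorem directly, with no passage to covers of $N_\alpha$. Separately, be aware that you conflate the topological support of $\nu$ with its measure-theoretic support: step 2 shows that $\nu'$-almost every $\alpha$ is aperiodic, but the topological support of an ergodic $\nu$ (e.g.\ a Bernoulli measure) can perfectly well contain periodic points, so in step 3 you cannot assume that an $\alpha$ arising from a point of the topological support of $\mu_\nu$ is non-periodic. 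The conclusion should be read measure-theoretically ($\mu_\nu$-almost every subgroup lies in no lattice), which is what your step 2 actually delivers and is all that is needed for the parenthetical claim about induction from a lattice.
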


The proof of this theorem occupies the rest of this section. In \S \ref{standard},  we recall  how to construct (non-commensurable pairs of) arithmetic 
manifolds with totally geodesic hypersurfaces. These will be the manifolds $M_0,M_1$ above, and cutting along the hypersurfaces will give the desired $N_0,N_1$. The reason we use arithmetic manifolds here is the very strong   disjointness criterion in Proposition \ref {disjointness}, which says that isometric immersions of $N_0,N_1$ into a common manifold cannot have overlapping images.   Using this, we then show that if there is a covering map from $N_\alpha$ to a finite volume manifold, then $\alpha$  is periodic.

\subsection{Constructing arithmetic manifolds}
\label{standard}
The standard way to construct arithmetic hyperbolic manifolds that contain
totally geodesic hypersurfaces is as follows. Let $F$ be a totally real number
field and $q$ a quadratic form in $n+1$ variables over $F$ such that $q$ is
definite positive at all real places of $F$ but one, where it has signature
$(1,n)$. Then the group of integer points $\Gamma_q=\SO(q,\mathcal{O}_F)$ is a
lattice in $\SO(1,n)$. If $q$ is written as $a_1x_1^2+\ldots+a_{n+1}x_{n+1}^2$
where $a_1,\ldots,a_n$ are totally positive and $a_{n+1}$ is negative at
exactly one real place,
then $\Gamma_q$ contains the subgroup associated to the quadratic form in
$n$ variables $a_2x_2^2+\ldots+a_{n+1}x_{n+1}^2$ which gives rise to
an imbedded totally geodesic hypersurface. It follows from work of Millson
that there exists an ideal $\mathfrak{p}$ such that this
hypersurface is actually embedded in the manifold associated to the
principal congruence subgroup of level $\mathfrak{p}$, i.e.
$\Gamma\cap\ker(\SL(n+1,\mathcal{O}_F)\rightarrow\SL(n+1,\mathcal{O}_F/\mathfrak{p}))$.
Moreover we can choose $\mathfrak{p}$ so that this hypersurface $S$ is
non separating.  In this case, $M-S$ is  the interior of a compact manifold $N$  that has two boundary components, both isometric to $S$.
Note also that the isometry type of $S$ depends only on
$a_2,\ldots,a_{n+1}$ and $\mathfrak{p}$.

The simplest example of the previous procedure
is when $F=\mathbb{Q}$ and $a_1,\ldots,a_n>0,\, a_{n+1}<0$ but then the
manifolds obtained are noncompact for $n\ge 4$. However, if
$F=\mathbb{Q}(\sqrt{d})$ for a square-free rational integer $d>0$,
$a_1,\ldots,a_n\in\mathbb{Q}_+^*$ and $a_{n+1}/\sqrt{d}\in\mathbb{Q}^*$ then
$q$ is anisotropic over $F$ so that $\Gamma_q\backslash\mathbb{H}^3$ is
compact.

\label{fq}
Now we want to find $a_1,\ldots,a_{n+1}$ and $a_1'$ such that:
\begin{itemize}
\item Both $a_1,\ldots,a_{n+1}$ and $a_1',\ldots,a_{n+1}$ satisfy the
conditions above;
\item The lattices obtained from $q=a_1x_1^2+\ldots+a_{n+1}x_{n+1}^2$ and
$q'=a_1'x_1^2+\ldots+a_{n+1}x_{n+1}^2$ are noncommensurable.
\end{itemize}
By \cite[2.6]{GPS} it suffices to show that $q'$ and $\lambda q$ are not
isometric for any $\lambda\in F^*$. For $n$ odd, since the
discriminants of $q'$ and $\lambda q$ are equal for all $\lambda$ it suffices
that $a_1/a_1'\not\in F^2$ since then the discriminants of $\lambda q$ and
$q'$ are never the same (as noted in \cite{GPS}).
For example we can take $F=\mathbb{Q}(\sqrt{2})$ and
\begin{equation*}
q=x_1^2+\ldots+x_n^2-3\sqrt{2}x_{n+1}^2,\: q'=7x_1^2+\ldots+x_n^2-3\sqrt{2}x_{n+1}^2
\end{equation*}

For $n$ even we need to consider another invariant.
Let $k$ be any field; for $u,v\in k^*$ the Hilbert symbol $(u,v)$ is defined
in \cite[III,1.1]{CA} as $1$ if $1=uv^2+vy^2$ for some $x,y\in k$ and
$-1$ otherwise. Then it is shown in \cite[IV, Th\'eor\`eme 2]{CA} that
$$\varepsilon(q)=\prod_{i<j}(a_i,a_j)$$
is an isometry invariant of $q$ over $k$. Now we suppose that
$k=\mathbb{Q}_p$ for a prime $p>2$, then for $a,b\in\mathbb{Z}_p$ we have
$(a,b)=-1$ if and only if either $p$ divides $a$ (resp. $b$) and $b$
(resp. $a$) is
a nonsquare unit (modulo squares), or $a,b$ have the same $p$-valuation
mod $2$ and $-a^{-1}b$ is a square unit (see 
\cite[III, Th\'eor\`eme 2]{CA}). Now let $q$ and $q'$ be as above and
$\lambda\in\mathbb{Q}_7^*$. Since $7=3\pmod{4}$, $-1$ is not a square mod 7
and it follows that $(\lambda,\lambda)=1$, so that
$\varepsilon(\lambda q)=(\lambda,-\lambda\sqrt{2})^n=1$ since $n$ is even.
On the other hand, we have $-3\sqrt{2}=5\pmod{7}$, which is not
a square, so that $\varepsilon(q')=(7,-3\sqrt{2})=-1$. It follows that $q'$
and $\lambda q$ are not isometric over $\mathbb{Q}_7$ for any
$\lambda\in\mathbb{Q}_7$.

 In conclusion, this shows that we can find noncommensurable  compact arithmetic $n$-manifolds $M_0,M_1$ that both contain a totally geodesic hypersurface isometric to some fixed $S$, and then we can cut $M_0,M_1$ along to produce manifolds $N_0,N_1$  as required in the statement of Theorem \ref{Iddo}.


\subsection{The proof of Theorem \ref {Iddo}} Suppose that $n\geq 3$ and $N_0\subset M_0, N_1 \subset M_1$ are hyperbolic $n$-manifolds as in the statement of the theorem.   The reason we require $M_0,M_1$  to be non-commensurable arithmetic manifolds is the following.

\begin {prop}\label {disjointness}
	Suppose that $M $ is another hyperbolic $n$-manifold and $i_0: N_0 \longrightarrow M$ and $i_1: N_1 \longrightarrow M$ are isometric immersions.  Then the images of $i_0,i_1$ are disjoint, except  possibly along their boundaries.
\end {prop}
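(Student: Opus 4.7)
The plan is a proof by contradiction. Suppose there were a point $p \in i_0(\operatorname{int} N_0) \cap i_1(\operatorname{int} N_1)$. Then a small ball $B \ni p$ in $M$ pulls back under each $i_a$ to an open ball $B_a \subset \operatorname{int} N_a \subset M_a$, and the two local inverses assemble into a local isometry $\psi \colon B_0 \to B_1$ between open subsets of the compact arithmetic manifolds $M_0, M_1$. The goal is to derive from this local coincidence a commensurability between the lattices $\Gamma_a = \pi_1(M_a) < G = \operatorname{Isom}(\mathbb{H}^n)$, contradicting the hypothesis.

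First I would lift $\psi$ to the common universal cover $\mathbb{H}^n$. By real-analyticity of hyperbolic geometry, the lifted isometry between small balls extends uniquely to a global element $g \in G$; replacing $\Gamma_1$ by $g^{-1} \Gamma_1 g$, one may assume $g = \operatorname{id}$. Then a common open ball $\tilde B \subset \mathbb{H}^n$ injects under both covering projections $\pi_a \colon \mathbb{H}^n \to M_a$, with image in $N_a$. Writing $\tilde N_a$ for the component of $\pi_a^{-1}(N_a)$ containing $\tilde B$, each $\tilde N_a$ is an open region bounded by a locally finite, $\Gamma_a$-invariant family of totally geodesic hyperplanes (the $\Gamma_a$-orbit of lifts of $\partial N_a$).

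Next I would propagate the overlap by reflecting through the totally geodesic boundaries. The key geometric claim is that any boundary hyperplane of $\tilde N_0$ which cuts through $\tilde N_1$ must in fact be a boundary hyperplane of $\tilde N_1$ as well, and vice versa: otherwise one region would terminate inside the interior of the other, contradicting that near $\tilde B$ their lifts agree and that each $\tilde N_a$ can be analytically continued across its boundary hyperplanes to form the respective doubles $DN_a$, which are themselves hyperbolic manifolds. Starting with $\tilde B$ and iterating matched reflections through corresponding boundary hyperplanes, an open-and-closed argument shows that the $\Gamma_0$- and $\Gamma_1$-orbits of boundary hyperplanes coincide globally as arrangements in $\mathbb{H}^n$.

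The main difficulty, and where the arithmeticity and non-commensurability hypotheses are exploited, is converting this coincidence of arrangements into a commensurability of $\Gamma_0$ and $\Gamma_1$. The group $\Lambda \le G$ preserving this common arrangement is discrete, since it permutes a locally finite decomposition of $\mathbb{H}^n$ into isometric pieces, and it contains $\Gamma_0$ cocompactly; both $\Gamma_a$ then sit as finite-index subgroups of $\Lambda$, so $\Gamma_0$ and $\Gamma_1$ are commensurable, the desired contradiction. I expect the delicate step to be the hyperplane-matching claim in the previous paragraph --- ruling out the a priori possibility that one $\tilde N_a$ extends strictly further than the other near a boundary hyperplane --- where one must leverage the GPS-style rigidity of arithmetic hyperplane arrangements from \S\ref{standard} together with the totally geodesic boundary condition.
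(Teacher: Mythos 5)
Your reduction to a single lattice-theoretic contradiction (commensurability of $\Gamma_0$ and $\Gamma_1$) is the right target, and the opening moves --- extending the local overlap to a global isometry $g\in G$ and normalizing $g=\mathrm{id}$ --- are fine. But the load-bearing step, the ``hyperplane-matching claim,'' has no valid justification and is in fact false as a statement of pure hyperbolic geometry. There is nothing contradictory about $\tilde N_0$ and $\tilde N_1$ sharing an open set while a boundary hyperplane of one cuts through the interior of the other: for example, take two intersecting totally geodesic hypersurfaces $S,S'$ in a single closed hyperbolic $n$-manifold $M$ and let $N_0, N_1$ be the results of cutting along $S$ and $S'$ respectively; their images overlap and their boundaries cross. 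Local agreement near $\tilde B$ plus ``analytic continuation to the doubles'' does not propagate, because the regions are free to diverge as soon as one of them crosses a boundary hyperplane of the other. So the configuration you are trying to exclude by soft geometry is exactly the configuration that must be killed by arithmeticity, and your proposal defers that to an unspecified ``GPS-style rigidity'' without supplying it. A symptom of the gap is that the hypothesis $n\ge 3$ never enters your argument, whereas it is essential.

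What the paper does at precisely this point: if $i_0(\partial N_0)$ meets $i_1(\mathrm{int}\,N_1)$ and vice versa, the intersections produce properly immersed totally geodesic hypersurfaces $\Sigma_0'\subset N_1$ and $\Sigma_1'\subset N_0$. For $n\ge 3$, Corollary 1.7.B of \cite{GPS} says $\pi_1\Sigma_0'$ and $\pi_1\Sigma_1'$ have Zariski closures equal to two distinct conjugates of $\SO(1,n-1)$, so together they generate a Zariski-dense subgroup of $\SO(1,n)$; choosing base points so that both groups sit inside $\pi_1 N_0\cap \pi_1 N_1\le \pi_1 M$, one concludes that the arithmetic lattices $\pi_1 M_0$ and $\pi_1 M_1$ have Zariski-dense intersection, hence are commensurable by Lemma \ref{criterion} --- the contradiction. (A separate, easy case handles $i_1(N_1)\subset i_0(\mathrm{int}\,N_0)$, which your argument also does not address.) If you want to salvage your global ``matching of hyperplane arrangements'' picture, you would still have to run essentially this Zariski-density argument at the first crossing hyperplane, at which point the rest of your machinery (the stabilizer $\Lambda$ of the common arrangement, its discreteness, the finite-index claims) becomes unnecessary.
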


 To prove this, recall the following commensurability criterion
(see \cite[1.6]{GPS}).

\begin{lem}
If $\Gamma,\Gamma'$ are two arithmetic subgroups in $\SO(1,n)$ such that
the intersection $\Gamma\cap\Gamma'$ is Zariski-dense in $\SO(1,n)$, then
this intersection has finite index in both of them (so that  in particular $\Gamma,\Gamma'$ are commensurable).
\label{criterion}
\end{lem}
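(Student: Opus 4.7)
\medskip

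\noindent
\textbf{Proof plan.} The strategy is to invoke the classification of arithmetic subgroups of $G=\SO(1,n)$: each such $\Gamma$ is commensurable with the image of $\mathbf{G}(\mathcal{O}_F)$ under the projection of $\mathbf{G}(F\otimes_\Q\R)$ onto its unique non-compact simple factor, for an admissible pair $(F,\mathbf{G})$ consisting of a totally real number field $F$ and an $F$-form $\mathbf{G}$ of $\SO(n+1)$ that is compact at every archimedean place of $F$ save one. Commensurability of two arithmetic subgroups in $G$ is equivalent to $\Q$-isomorphism of the corresponding pairs. First I would attach such data $(F,\mathbf{G})$ to $\Gamma$ and $(F',\mathbf{G}')$ to $\Gamma'$, and reformulate the goal as producing an isomorphism $(F,\mathbf{G})\cong(F',\mathbf{G}')$ over $\Q$.

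Next I would pass to the restriction-of-scalars $\Q$-groups $\mathbf{H}:=\mathrm{Res}_{F/\Q}\mathbf{G}$ and $\mathbf{H}':=\mathrm{Res}_{F'/\Q}\mathbf{G}'$, realized inside a common matrix group $\GL_N$, so that after replacing $\Gamma,\Gamma'$ by finite-index subgroups we have $\Gamma\subset\mathbf{H}(\Z)$ and $\Gamma'\subset\mathbf{H}'(\Z)$. Every element of $H:=\Gamma\cap\Gamma'$ then carries two arithmetic realizations, and one considers the $\Q$-Zariski closure $\mathbf{L}\subset\mathbf{H}\times\mathbf{H}'$ of the diagonal embedding $h\mapsto(h,h)$. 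By the Zariski density hypothesis, the image of $\mathbf{L}(\R)$ in the product $G\times G$ of the distinguished non-compact simple factors of $\mathbf{H}(\R)$ and $\mathbf{H}'(\R)$ contains the diagonal copy of $G$, and the two projections $\mathbf{L}\to\mathbf{H}$, $\mathbf{L}\to\mathbf{H}'$ are surjective onto the $\Q$-simple factors meeting those real factors. A Goursat-style analysis, which uses that $G=\SO(1,n)$ is algebraically simple for $n\geq 2$, then forces $\mathbf{L}$ to be the graph of a $\Q$-isogeny between these $\Q$-simple factors, intertwining the projections to $G$. Galois descent through the restriction of scalars converts this into a $\Q$-isomorphism $(F,\mathbf{G})\cong(F',\mathbf{G}')$, whence $\Gamma$ and $\Gamma'$ are commensurable, so $\Gamma\cap\Gamma'$ has finite index in each.

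The main obstacle is executing the Goursat step cleanly in spite of the compact factors appearing in $\mathbf{H}(\R)$ and $\mathbf{H}'(\R)$: the projection $\mathbf{H}\to G$ is only defined after extending scalars to $\R$ and selecting the non-compact real place of $F$, so it is not a $\Q$-morphism and the identification of $G$ inside $\mathbf{H}$ and inside $\mathbf{H}'$ is not canonical over $\Q$. The argument must therefore be carried out at the level of the full $\Q$-groups, relying on the fact that a $\Q$-simple factor of a restriction of scalars $\mathrm{Res}_{F/\Q}\mathbf{G}$ together with the distinguished real place determines $(F,\mathbf{G})$ uniquely up to $\Q$-isomorphism. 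An alternative, equivalent route is to invoke Margulis's commensurator theorem, which identifies $\mathrm{Comm}_G(\Gamma)$ with the projection of $\mathbf{G}(F)$ to $G$, and then to use the Zariski density of $H$ to show that every $\gamma'\in\Gamma'$ lies in $\mathrm{Comm}_G(\Gamma)$; but the underlying structural input is the same as in the Goursat approach.
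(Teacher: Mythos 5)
The paper does not actually prove this lemma: it is quoted as a known commensurability criterion from Gromov--Piatetski-Shapiro \cite[1.6]{GPS}, so there is no in-text argument to compare yours against. Your plan is essentially a reconstruction of the standard proof from that reference, and both routes you sketch (the Goursat analysis of the Zariski closure $\mathbf{L}$ of the diagonal copy of $H=\Gamma\cap\Gamma'$ in $\mathbf{H}\times\mathbf{H}'$, and the commensurator argument) are viable. The one step that does not follow formally from the hypotheses, and which you flag but do not resolve, is the surjectivity of the projections $\mathbf{L}\to\mathbf{H}$, $\mathbf{L}\to\mathbf{H}'$: a priori the Zariski closure of $H$ in $\mathbf{H}=\mathrm{Res}_{F/\Q}\mathbf{G}$ could be a proper $\Q$-subgroup, namely a (possibly twisted) diagonal across several Galois-conjugate factors $\mathbf{G}^{\sigma}$, corresponding to $H$ being conjugate into a form defined over a proper subfield $E\subset F$; Zariski density of $H$ in $\SO(1,n)$ alone does not exclude this. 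This is exactly where admissibility must be used: such a diagonal subgroup has a single real simple factor mapping isomorphically onto both $\mathbf{G}(F_{\sigma_0})\cong\SO(1,n)$ and $\mathbf{G}(F_{\sigma_1})$ for a second real place $\sigma_1$ of $F$ lying over the same place of $E$ as $\sigma_0$ (such a $\sigma_1$ exists because a real place of the totally real field $E$ splits completely in the totally real field $F$), and $\mathbf{G}(F_{\sigma_1})$ is compact by admissibility, a contradiction. Once surjectivity is in hand, the rest of your outline is correct: $\mathbf{L}$ is proper because its image in the product of the two distinguished complex factors is the diagonal, so by Goursat it is the graph of a $\Q$-isomorphism $\mathbf{H}\to\mathbf{H}'$ inducing the identity on $G$, and commensurability of $\Gamma$ and $\Gamma'$ follows since $\Q$-isomorphisms preserve arithmetic subgroups up to commensurability. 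So the proposal is a correct plan, provided you supply the admissibility argument above rather than treating the surjectivity of the projections as automatic.
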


\begin {proof}[Proof of  Proposition \ref {disjointness}]
 Hoping for a contradiction, assume that the images of the interiors intersect.    To begin with, assume also that  there are components $\Sigma_0 \subset \partial N_0$ and $\Sigma_1 \subset\partial N_1$  such that 
$i_0(\Sigma_0) \cap i_1(int(N_1))\neq \emptyset$ and $i_1(\Sigma_1) \cap i_0(int(N_0))\neq \emptyset$.
Then the preimages $\Sigma_0'=i_1^{-1}(i_0(\Sigma_0))$ and $\Sigma_1' = i_0^{-1}(i_1(\Sigma_1))$ are  properly immersed totally geodesic hypersurface in $N_1, N_0$, respectively.   

 Fixing a monodromy map, identify $\pi_1(M)$  with a  discrete subgroup of $\SO(1,n)$. By choosing a base point within $N_0\cap N_1$, we can also select  subgroups in $\pi_1(M)$ that represent the fundamental groups of all the other manifolds and hypersurfaces above, such that
\emph {both $\pi_1 \Sigma_0' $ and $\pi_1 \Sigma_1' $ are contained in $\pi_1 N_1 \cap \pi_1 N_0$.} 

By Corollary 1.7.B of \cite{GPS},  for instance, the Zariski closure of $\pi_1 \Sigma_0'$ in $\SO(1,n)$  is isomorphic to $SO(1,n-1)$.   Similarly, the Zariski closure of $\pi_1 \Sigma_1'$  is a  (different) copy of $\SO(1,n-1)\subset \SO(1,n)$, so the group  $\langle \pi_1 \Sigma_0', \pi_1 \Sigma_1'\rangle$ generated by these two groups,   is Zariski dense in $\SO(1,n)$. 
 But $$\langle \pi_1 \Sigma_0', \pi_1 \Sigma_1'\rangle \subset \pi_1 N_0 \cap \pi_1 N_1,$$ so $\pi_1 M_0,\pi_1 M_1$  can be represented by lattices in $\SO(1,n)$ with Zariski dense  intersection. By Lemma \ref{criterion},  this contradicts that $M_0,M_1$  are not commensurable.

 The only remaining case is that, say, $i_0(\partial N_0)$  does not intersect $ i_1(int(N_1))$, so that $i_1(N_1) \subset i_0(int(N_0))$. In this case, though, $i_0^{-1}(i_1(N_1))$ is a  compact, immersed submanifold of $N_1$. So, some  finite index subgroup of $\pi_1 N_1 $  injects into $\pi_1 N_0$, and as $\pi_1 N_1 $  is Zariski dense in $\SO(1,n)$, we  get a contradiction just as before.
\end {proof}

 The appeal to Corollary 1.7.B of \cite{GPS} is the part of the argument above that uses $n\geq 3$.  If $n=2$, then $\Sigma_0'$   could be a geodesic segment,  in which case its (trivial) fundamental group is certainly not Zariski dense in $\SO(1,1)$.

 We are now ready to finish the proof of Theorem \ref {Iddo}, which we encapsulate in the following proposition.

\begin {prop}
Suppose that $\alpha =(\alpha_k) \in \{0,1\}^\BZ$ and that $f: N_\alpha \longrightarrow M $  is a covering map, where $M$  has finite volume.  Then $\alpha$  is periodic.
\end {prop}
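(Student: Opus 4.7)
The plan is to encode each block $N_{\alpha,i}$ by its isometric immersion $\iota_i := f|_{N_{\alpha,i}} \colon N_{\alpha_i} \to M$, show that the total set of such immersions is finite, and propagate a pigeonhole coincidence through the gluing structure of $N_\alpha$. To see finiteness, write $\mathcal I_a$ for the set of isometric immersions $N_a \to M$ and $\mathcal I = \mathcal I_0 \sqcup \mathcal I_1$. Each $\iota \in \mathcal I_a$ is a finite covering onto a compact totally geodesic submanifold of $M$ with totally geodesic boundary, of volume at most $\vol(N_a)$; the finite volume of $M$ together with Mostow-type rigidity in dimension $n \geq 3$ bounds the number of such submanifolds, and each admits only finitely many coverings by $N_a$ (since $\Isom(N_a)$ is finite), so $|\mathcal I| < \infty$.

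By pigeonhole applied to $(\iota_i)_{i \in \BZ} \in \mathcal I^{\BZ}$, there exist indices $i < j$ with $\iota_i = \iota_j$; in particular $\alpha_i = \alpha_j$. Set $S^+ := \iota_i(\Sigma^+_{\alpha_i}) = \iota_j(\Sigma^+_{\alpha_j})$, a totally geodesic hypersurface in $M$. The block $N_{\alpha,i+1}$ is attached to $N_{\alpha,i}$ along $H_i$ via the canonical boundary identifications of the construction, so $\iota_{i+1}(N_{\alpha_{i+1}})$ sits on the side of $S^+$ opposite $\iota_i(N_{\alpha_i})$, and similarly $\iota_{j+1}(N_{\alpha_{j+1}})$ sits on that same side. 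A one-sided tubular neighborhood of $S^+$ is entirely of a single type by Proposition \ref{disjointness} applied locally, which forces $\alpha_{i+1} = \alpha_{j+1}$. Furthermore, hyperbolic rigidity implies that an isometric immersion of $N_a$ into $M$ is uniquely determined by its restriction to one boundary component (via unique analytic continuation on $\BH^n$); since both $\iota_{i+1}|_{\Sigma^-}$ and $\iota_{j+1}|_{\Sigma^-}$ are determined by the common datum $\iota_i|_{H_i} = \iota_j|_{H_j}$ and the fixed gluing, we obtain $\iota_{i+1} = \iota_{j+1}$.

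Iterating this propagation in both directions yields $(\iota_{i+l}, \alpha_{i+l}) = (\iota_{j+l}, \alpha_{j+l})$ for every $l \in \BZ$, so $\alpha$ is periodic of period $j-i$. The main obstacle is the finiteness of $\mathcal I$, which rests on an appropriate rigidity theorem for compact hyperbolic submanifolds with totally geodesic boundary sitting inside the finite-volume $M$ when $n \geq 3$; the subsequent propagation is then a matter of analytic continuation and the local version of Proposition \ref{disjointness}.
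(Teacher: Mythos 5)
Your propagation step is sound: once two blocks $N_{\alpha,i}$, $N_{\alpha,j}$ map to $M$ by the \emph{same} immersion, Proposition \ref{disjointness} applied to overlapping one-sided collars forces $\alpha_{i\pm1}=\alpha_{j\pm1}$, and unique continuation of local isometries (an isometric immersion of a connected hyperbolic manifold is determined by its $1$-jet at a point, with the side of the hypersurface specified by the gluing) forces $\iota_{i\pm1}=\iota_{j\pm1}$; iterating gives periodicity. The genuine gap is the finiteness of $\mathcal I$, which you correctly identify as the crux but do not actually establish. Both ingredients you offer for it are problematic. First, $f|_{N_{\alpha,i}}$ is \emph{not} in general a finite covering onto a compact embedded totally geodesic submanifold of $M$: the image $f(N_{\alpha,i})$ is bounded by the \emph{immersed} hypersurfaces $f(\Sigma_{i-1})$, $f(\Sigma_i)$, which may self-intersect and may pass through the images of other blocks of the same type, so the image need not be a submanifold and the restriction need not be a covering onto it. (The paper's own proof has to confront exactly this point, and only controls the interfaces between blocks of \emph{different} types, via \eqref{disjointeq}.) Second, ``Mostow-type rigidity'' does not bound the number of totally geodesic immersions of a fixed compact piece into $M$; that is a separate finiteness statement. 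What would actually work is: each $\iota\in\mathcal I_a$ is determined by the image under $D\iota$ of a fixed frame at a fixed interior point of $N_a$; these frames lie in a compact part of the frame bundle of $M$ (compactness of $N_a$ plus the fact that its image must meet the thick part of $M$); and the set of such frames is discrete because $\pi_1(N_a)$ is Zariski dense in $\SO(1,n)$, so two $C^1$-close immersions induce the same homomorphism $\pi_1(N_a)\to\pi_1(M)$ and hence coincide. None of this is in your write-up, and it is the heart of your argument.

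It is worth noting that your route is genuinely different from the paper's, which avoids any such finiteness lemma. The paper uses Proposition \ref{disjointness} globally to write $M=I_0\cup I_1$ with $I_0\cap I_1$ a union of totally geodesic hypersurfaces, shows that each maximal run (``chunk'') of same-type blocks covers a component of $I_0$ or $I_1$ with degree, and hence length, determined by a volume computation, and then reads off periodicity of $\alpha$ from the fact that the finitely many components of $I_0$ and $I_1$ are arranged end-to-end in a circle or a segment. That argument trades your pigeonhole-plus-rigidity for an elementary volume count and a combinatorial observation, at the price of having to handle the circle versus segment dichotomy; your approach, once the finiteness of $\mathcal I$ is properly proved, yields the period more directly but requires the nontrivial rigidity input sketched above.
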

\begin {proof}
 For convenience, assume throughout the following that $\alpha$  is not a constant sequence. Recall that $ N_{\alpha,i} \cong N_{\alpha_i}$ is the $i^{th}$ block in the gluing representing $N_{\alpha}$. Let $\Sigma_i$ be  the hypersurface that is the common boundary of $N_{\alpha,i}$ and  $N_{\alpha,i+1}$. All the $\Sigma_i$  are isometric to a fixed hyperbolic $(n-1)$-manifold $\Sigma$. 

By  Proposition \ref{disjointness},  we have that if $\alpha_i=0$ and $\alpha_j=1$,  then \begin {equation}
 	int(f(N_{\alpha,i}) \cap f(N_{\alpha,j})) = \emptyset. \label {disjointeq}
 \end {equation}
Hence $M=I_0 \cup I_1$,  where $I_0 = f(\cup_{i, \alpha_i=0} N_{\alpha,i}) $ and $I_1$  is defined similarly.   It follows that $I_0 \cap I_1$  is a set of totally geodesic hypersurfaces in $M$,  each of which is covered by $\Sigma$. A priori, you might imagine that the common boundary of $I_0 $ and  $ I_1$ has corners, remembering that the surfaces $f(\Sigma_i)$ are only immersed in $M$. However, if $\alpha_i=0$ and $\alpha_{i+1}=1$, say, then any transverse self intersection of the image of $f(\Sigma_i) $ would create   interior in the intersection $f (N_{\alpha_i}) \cap f(N_{\alpha_{i+1}} )$,  contradicting \eqref{disjointeq}.

Let's call a connected submanifold of $N_\alpha$  that is a   {maximal} union of   consecutive  blocks isometric to $N_0$ a \emph{0-chunk}, and define a \emph {1-chunk} similarly. The restriction of $f$ to any $0$-chunk  is a covering map onto some component $C\subset I_0$, and the degree of  this covering  is $ {2 \vol(\Sigma)}/{ \vol(\partial C)}.$ By  volume considerations, the number of blocks in any $0$-chunk that covers $C$  must then be
\begin{equation}
\frac{2 \vol(\Sigma)\cdot  \vol(C)}{\vol(N_0)\cdot \vol(\partial C) }.\label {numberblocks}	
\end{equation}
 Of course, all the  same  statements hold for $1$-chunks covering components of $I_1$.

From the covering property, every component $C\subset I_0$  has either one or two boundary components.  It follows that either
\begin {enumerate}
\item the (finitely many) components of $I_0$ and $I_1$  all have two boundary components, and are arranged in $M$ end-to-end in a circle, or
	\item  the components of $I_0$ and $I_1$  are arranged in a line segment, with one-boundary-component $C$'s at the extremities.
\end {enumerate}
When a  component $C $ has two boundary components, the two boundary components of a chunk covering $C$ cover distinct components of $\partial C$.  From this, it follows that $\alpha $ is periodic. Namely, a cyclic word representing $\alpha$  can be obtained  from the circle in case (1) by using \eqref{numberblocks} to determine the number of $0$'s and $1$'s to associate to each component of $I_0$ and $I_1$.   Case (2)  is similar, except that the cyclic word is produced by traversing the line segment twice,  first forward and then backward, but only counting the endpoints once each.
\end{proof}

\bibliographystyle{abbrv}
\bibliography{total2,affine,newbib}

\end{document}